\documentclass[11pt]{amsart}
\usepackage{amscd}
\usepackage[arrow,matrix]{xy}
\usepackage{graphicx}
\usepackage{amsmath}
\usepackage{hyperref}
\usepackage{amsrefs}
\usepackage{xcolor}
\usepackage{soul}
\usepackage{comment}
\usepackage{amsmath, latexsym, amssymb}
\input xypic

\numberwithin{equation}{section}
\theoremstyle{plain}
\newtheorem{lemma}{Lemma}[section]
\newtheorem{proposition}[lemma]{Proposition}
\newtheorem{theorem}[lemma]{Theorem}
\newtheorem{corollary}[lemma]{Corollary}

\theoremstyle{definition}
\newtheorem{definition}[lemma]{Definition}
\newtheorem{remark}[lemma]{Remark}
\newtheorem{example}[lemma]{Example}

\DeclareGraphicsRule{.tif}{png}{.png}{`convert #1 `dirname #1`/`basename #1 .tif`.png} 

\usepackage{tikz}
\usepackage{dsfont}
\usetikzlibrary{matrix}

\definecolor{grey}{RGB}{188,188,188}

\begin{document}
\newcommand{\R}{{\mathbb R}}
\newcommand{\C}{{\mathbb C}}
\newcommand{\F}{{\mathbb F}}
\renewcommand{\O}{{\mathbb O}}
\newcommand{\Z}{{\mathbb Z}} 
\newcommand{\N}{{\mathbb N}}
\newcommand{\Q}{{\mathbb Q}}
\renewcommand{\H}{{\mathbb H}}
\newcommand{\X}{{\mathfrak X}}

\newcommand{\Aa}{{\mathcal A}}
\newcommand{\Bb}{{\mathcal B}}
\newcommand{\Cc}{{\mathcal C}}    
\newcommand{\Dd}{{\mathcal D}}
\newcommand{\Ee}{{\mathcal E}}
\newcommand{\Ff}{{\mathcal F}}
\newcommand{\Gg}{{\mathcal G}}    
\newcommand{\Hh}{{\mathcal H}}
\newcommand{\Kk}{{\mathcal K}}
\newcommand{\Ii}{{\mathcal I}}
\newcommand{\Jj}{{\mathcal J}}
\newcommand{\Ll}{{\mathcal L}}    
\newcommand{\Mm}{{\mathcal M}}    
\newcommand{\Nn}{{\mathcal N}}
\newcommand{\Oo}{{\mathcal O}}
\newcommand{\Pp}{{\mathcal P}}
\newcommand{\Qq}{{\mathcal Q}}
\newcommand{\Rr}{{\mathcal R}}
\newcommand{\Ss}{{\mathcal S}}
\newcommand{\Tt}{{\mathcal T}}
\newcommand{\Uu}{{\mathcal U}}
\newcommand{\Vv}{{\mathcal V}}
\newcommand{\Ww}{{\mathcal W}}
\newcommand{\Xx}{{\mathcal X}}
\newcommand{\Yy}{{\mathcal Y}}
\newcommand{\Zz}{{\mathcal Z}}

\renewcommand{\a}{{\mathfrak a}}
\renewcommand{\b}{{\mathfrak b}}
\newcommand{\e}{{\mathfrak e}}
\renewcommand{\k}{{\mathfrak k}}
\newcommand{\m}{{\mathfrak m}}
\newcommand{\pg}{{\mathfrak p}}
\newcommand{\g}{{\mathfrak g}}
\newcommand{\gl}{{\mathfrak gl}}
\newcommand{\h}{{\mathfrak h}}
\renewcommand{\l}{{\mathfrak l}}
\newcommand{\sm}{{\mathfrak m}}
\newcommand{\n}{{\mathfrak n}}
\newcommand{\s}{{\mathfrak s}}
\renewcommand{\o}{{\mathfrak o}}
\renewcommand{\u}{{\mathfrak u}}
\newcommand{\su}{{\mathfrak su}}

\newcommand{\ssl}{{\mathfrak sl}}
\newcommand{\ssp}{{\mathfrak sp}}
\renewcommand{\t}{{\mathfrak t }}

\newcommand{\zt}{{\tilde z}}
\newcommand{\xt}{{\tilde x}}
\newcommand{\Ht}{\widetilde{H}}
\newcommand{\ut}{{\tilde u}}
\newcommand{\Mt}{{\widetilde M}}
\newcommand{\Llt}{{\widetilde{\mathcal L}}}
\newcommand{\yt}{{\tilde y}}
\newcommand{\vt}{{\tilde v}}
\newcommand{\Ppt}{{\widetilde{\mathcal P}}}
\newcommand{\bp }{{\bar \partial}} 

\newcommand{\Remark}{{\it Remark}}
\newcommand{\Proof}{{\it Proof}}
\newcommand{\ad}{{\rm ad}}
\newcommand{\Om}{{\Omega}}
\newcommand{\om}{{\omega}}
\newcommand{\eps}{{\varepsilon}}
\newcommand{\Di}{{\rm Diff}}

\renewcommand{\a}{{\mathfrak a}}
\renewcommand{\b}{{\mathfrak b}}
\renewcommand{\k}{{\mathfrak k}}
\renewcommand{\l}{{\mathfrak l}}
\renewcommand{\o}{{\mathfrak o}}
\renewcommand{\u}{{\mathfrak u}}
\renewcommand{\t}{{\mathfrak t }}
\newcommand{\Cinf}{C^{\infty}}
\newcommand{\la}{\langle}
\newcommand{\ra}{\rangle}
\newcommand{\half}{\scriptstyle\frac{1}{2}}
\newcommand{\p}{{\partial}}
\newcommand{\notsub}{\not\subset}
\newcommand{\iI}{{I}}               
\newcommand{\bI}{{\partial I}}      
\newcommand{\LRA}{\Longrightarrow}
\newcommand{\LLA}{\Longleftarrow}
\newcommand{\lra}{\longrightarrow}
\newcommand{\LLR}{\Longleftrightarrow}
\newcommand{\lla}{\longleftarrow}
\newcommand{\INTO}{\hookrightarrow}

\newcommand{\QED}{\hfill$\Box$\medskip}
\newcommand{\UuU}{\Upsilon _{\delta}(H_0) \times \Uu _{\delta} (J_0)}
\newcommand{\bm}{\boldmath}
\newcommand{\pb}{{\mathbf p}}

\newcommand{\GL}{{\rm GL}}
\newcommand{\SL}{{\rm SL}}
\newcommand{\SO}{{\rm SO}}
\newcommand{\G}{{\rm G_2}}
\newcommand{\Spin}{{\rm Spin(7)}}
\newcommand{\Id}{\mathrm{Id}}
\newcommand{\smallr}{\mathrm{small}}

\newenvironment{nouppercase}{%
  \let\uppercase\relax%
  \renewcommand{\uppercasenonmath}[1]{}}{}

\setcounter{tocdepth}{1}

\title[$C_\infty$-algebras and $\ell(r-1)+2$-dimensional $(r-1)$-connected manifolds]{\Large {Unital $C_\infty$-algebras and the real homotopy type of $(r-1)$-connected compact manifolds of dimension $\le \ell(r-1)+ 2$}}

	\author{Domenico Fiorenza}
\address{
	Dipartimento di Matematica ``Guido Castelnuovo'',
	Universit\`a   di Roma ``La Sapienza",
	Piazzale Aldo Moro 2, 00185 Roma, 
	Italy}
\email{fiorenza@mat.uniroma1.it} 

\author{H\^ong V\^an L\^e }
\address{Institute  of Mathematics of the Czech Academy of Sciences,
	Zitna 25, 11567  Praha 1, Czech Republic}

\address{{\it and} Charles University, Faculty of Mathematics and Physics\\
	Ke Karlovu 3, 121 16 Praha 2, Czech Republic}
	
\email{hvle@math.cas.cz}
\date{\today}

\date{\today}

\abstract We encode  the  real homotopy  type of an $n$-dimensional  $(r-1)$-connected compact manifold $M$, $ r\ge  2$ into
  a  minimal unital $C_\infty$-structure  on $H^* (M,\R)$,  obtained via  a Hodge homotopy transfer of the unital   DGCA structure  of  the small  quotient  algebra associated  with a  Hodge decomposition  of the  de Rham algebra $\Aa^*(M)$, which has been proposed by  Fiorenza-Kawai-L\^e-Schwachh\"ofer   in \cite{FKLS2021}. 
We prove that if  $n \le$ $\ell(r-1)+2$, with $\ell\geq 4$, the multiplication $\mu_k$ on  the   minimal  unital $C_\infty$-algebra $H^*(M,\R)$  vanishes   for all $k \ge \ell-1$. This extends the results from  \cite{FKLS2021}, extending the bound on the dimension from $5r-3$ the general bound $\ell(r-1)+2$.   We  also  prove a variant  of this  result, conjectured by Zhou, stating that if $n \le  \ell(r-1)+4$  and   $b_r (M) =1$  then   the multiplication $\mu_k$   for all $k \ge \ell-1$  vanishes.  This  implies two formality results by Cavalcanti \cite{Cavalcanti2006}. We show that in any dimension $n$ the Harrison cohomology class $[\mu_3]\in {\rm HHarr}^{3,-1}(H^* (M, \R), H^*(M, \R)) $ is a homotopy invariant of $M$ and the first obstruction to formality,  and provide a detailed proof that if $n\leq 4r-1$ this is  the only obstruction.
Furthermore, we show that in any dimension $n$  the class  $[\mu_3]$  and the   Bianchi-Massey  tensor  invented by Crowley-Nordstr\"om in \cite{CN} define  each other uniquely. 
\endabstract

\keywords{real homotopy type, Hodge homotopy,  unital $C_\infty$-structure, Harrison cohomology, Bianchi-Massey tensor}
\subjclass[2010]{Primary: 55P62, Secondary: 57R19, 13D03, 58A10} 
\begin{nouppercase}
\maketitle
\end{nouppercase}

\tableofcontents

\section{Introduction}

In  his seminal  papers  \cite{Sullivan1975, Sullivan1977}, Sullivan  provided algebraic models  of  simply connected  compact manifolds $M$  of dimension  at least 5,  using   the smooth differential forms  on $M$.\footnote{The dimension 5  is motivated by  Smale's solution of  the  Poincar\'e  conjecture   in dimension  at least 5   \cite{Smale1961}, which opened  the way  to classification  of simply connected differentiable manifolds in dimension 5 \cite{Smale1962}.}  Sullivan's theorem \cite[Theorem 13.1]{Sullivan1977} states that  the diffeomorphism  type of  a  simply connected  manifold $M$ of dimension  at  least  $5$  is  defined  by  its real  homotopy  type, real Pontryagin classes, and by the torsion of $M$, up to   finite  ambiguity   \cite[Theorem 10.4]{Sullivan1977}, see  also   the extension  of Sullivan's theorem by Kreck and Triantafillou \cite[Theorem 1.1]{KT1991}.  Furthermore,    Sullivan proved that rational homotopy type of $M$ is captured by the DGCA weak equivalence class of piecewise polynomial differential forms  with rational coefficients on $M$ and  the real homotopy type by that of de Rham forms  \cite[Theorems A, B, C, D]{Sullivan1975}. 

  In 1979  Miller  showed  that  any  $(r-1)$-connected  compact manifold of dimension  less than or equal to $4r-2$ is formal \cite{Miller1979}.  In particular,   the real  homotopy type  of any  compact  simply connected  manifold  $M$  of dimension less than  or equal 6 is defined  by  its cohomology  ring $H^* (M, \R)$. In 2020  Crowley-Nordstr\"om, using    Sullivan minimal models,   showed  that the rational homotopy type   of   $(r-1)$-connected
compact  manifolds $M$ of dimension less  than  or equal $(5r-3)$, $ r\ge 2,$  is determined  by its  cohomology algebra $H^* (M, \Q)$ and by the Bianchi-Massey  tensor  they introduced in \cite{CN}. In  the  recent  paper \cite{NN2021} Nagy and Nordstr\"om   developed  the  techniques from \cite{CN}  further and showed  that   the formality  of  a $(r-1)$-connected compact manifold  $M$ of dimension  up to  $(5r-2)$ is  equivalent to the vanishing  of  the  Bianchi-Massey tensor and  the pentagon  Massey tensor,  a new homotopy invariant  introduced  by  them.

In 2021  Fiorenza-Kawai-L\^e-Schwachh\"ofer   proposed a new method  to  study the real  homotopy   type  of a  compact simply connected  smooth manifold by introducing  the   concept  of a Poincar\'e  DGCA   admitting a Hodge  homotopy  \cite{FKLS2021}, see  also  Definitions \ref{def:pdgca}, \ref{def:hodgehomotopy} below.  They made the crucial observation that a simply connected Poincar\'e DGCA  $\Aa^*$ of degree $n$ admitting a Hodge homotopy  is weak  equivalent  to a finite  dimensional  non-degenerate  Poincar\'e  DGCA $\Qq^* (\Aa^*)$ of degree $n$ endowed  with a Hodge homotopy \cite[Theorem 3.10]{KS2001}, recalled in Proposition \ref{prop:qsmall} below.  Using  a Hodge  homotopy transfer  of the  DCGA  structure  of $\Qq^* (\Aa^*)$  to  the  cohomology  algebra $H^* (\Qq^*  (\Aa^*)) = H^* (\Aa^*)$, they  proved  that if  $\Aa^*$ is a $n$-dimensional $(r-1)$-connected  Poicar\'e  DGCA admitting a  Hodge  homotopy,  $n \le 5r-3$, $ r\ge 2$, then $\Aa^*$ is $C_\infty$-quasi-isomorphic to a minimal $C_\infty$-algebra with  vanishing   higher multiplications $m_k$,  for  $k \ge 4$ \cite[Theorem 4.1]{FKLS2021}. By Kadeishivili results
 \cite{Kadeishvili1988, Kadeishvili1993, Kadeishvili2009}, see also \cite[Theorems 14, 15]{Kadeishvili2023},   the weak  equivalence  class   of the   DGCA  of  piecewise polynomial differential forms on $M$  with rational coefficients on $M$ and that of the de Rham  forms  is captured  by the  homotopy  type of the  associated  $C_\infty$-algebra obtained  by   a  homotopy transfer. Thus  Fiorenza-Kawai-L\^e-Schwachh\"ofer's results     provide  new information of   real homotopy type  of  $n$-dimensional $(r-1)$-connected    compact manifold $M$, $ n \le 5r-3$, $r \ge 2$,  in terms  of   a minimal $C_\infty$-algebra  structure on $H^*(M, \R) $. Next, Zhou improved the result from \cite{FKLS2021} showing that generally, for any $\ell\geq 4$, if $n\leq \ell (r-1)+2$, then  $\Aa^*$ is $A_\infty$-quasi-isomorphic to a minimal $A_\infty$-algebra with  vanishing   higher multiplications $m_k$,  for  $k \ge \ell-1$.

In this  paper we  show the effectiveness of the techniques from \cite{FKLS2021} to give a very quick proof of Zhou's result, as well as extending it from $A_\infty$ to $C_\infty$-algebras (Theorem \ref{thm:A2-algebra}), and   develop   the techniques   in \cite{FKLS2021} further 
to  study  the real  homotopy  type  of  an  $n$-dimensional $(r-1)$-connected    compact manifold $M$,  $r \ge 2$,  in terms  of   a minimal unital $C_\infty$-algebra  structure on $H^*(M, \R)$,    whose multiplication $\mu_2$ coincides  with the  usual multiplication $\cdot$ on $H^* (M,\R)$.  As a result, we  proved  a conjecture by Zhou  stating   that if $n \le  \ell (r-1)+4$  and  $b_r (M) = 1$  then  $m_k$ vanishes for $k \ge \ell -1$ \cite{Zhou2019}.  This  is a  generalization of  two  formality  results  by Cavalcanti \cite{Cavalcanti2006}. We  show that the  Harrison cohomology  class $[\mu_3] \in {\rm HHarr}^{3,-1} (H^*(M, \R), H^* (M, \R))$, where $\mu_3$ is the ternary  operation on the  unital  $C_\infty$-algebra $(H^*(M,\R),  \cdot, \mu_3, \ldots)$,  is a  homotopy  invariant  of   $M$ and the 
first obstruction to its  formality. Moreover we show  that $[\mu_3]$  and the  Bianchi-Massey tensor  define  each other uniquely. For $\ell=6$ one finds  that  if $n \le 6r-4$  then  the  multiplication $\mu_k$ on the  unital  $C_\infty$-algebra $(H^*(M,\R),  \cdot, \mu_3, \ldots)$ vanishes for $k>4$, so that the unital  $C_\infty$-algebra structure is completely encoded into the multiplications $\mu_3$ and $\mu_4$. This  complements to Nagy-Nordstr\"om's  result  on   the equivalence  of the   formality  of  $(r-1)$-connected  compact  manifolds  of dimension  $n \le  5r-2$ and the  vanishing  of the  Bianchi-Massey and   pentagon Massey tensor  \cite[Theorem 1.7]{NN2021}. 

Our paper  is organized  as follows.  In  Section \ref{sec:ainftyreg},  we  recall  the concept  of an $(r-1)$-connected Poincar\'e DGCA $\Aa^*$ of degree $n$ admitting a Hodge homotopy (Definitions \ref{def:pdgca}, \ref{def:hodgehomotopy} {and \ref{def:connected}})  and  the related  concept of  the small quotient  algebra
$\Qq^*(\Aa^*)$  (Proposition \ref{prop:qsmall}).   Using  a Hodge homotopy transfer of the  unital DGCA  structure on $\Qq^*(\Aa^*)$ to a minimal unital  $C_\infty$-structure  on a   space  $\Hh^*$ of   harmonic elements of  $\Aa^*$, we   show   that  if   $n \le \ell (r-1)+2$ with $\ell\geq 4$, then  all   the higher  operations $m_k$, $k>\ell$, on the  minimal unital  $C_\infty$-algebra $(\Hh^*, m_2, m_3, \ldots, )$  vanish; in particular, $m_4$ vanishes  if $n \le 5r-3$ and  $m_3$ vanishes  if $n \le 4r-2$ (Theorem \ref{thm:A2-algebra}). We  then prove the Zhou's conjecture  (Theorem \ref{thm:zhou-s-conjecture}) and derive  from it and from Theorem \ref{thm:A2-algebra} various   previously known    formality  results.  In Section \ref{sec:harrform},  we transfer the operations $m_k$ on $\mathcal{H}^\ast$ to operations $\mu_k$ on $H^\ast(\Aa^\ast)$ via the canonical linear isomorphism $\mathcal{H}^\ast\xrightarrow{\sim} H^\ast(\Aa^\ast)$ and  prove that  the Harrison cohomology class $[\mu_3]\in  {\rm HHarr}^{3,-1} (H^*(\Aa^*), H^* (\Aa^*))$ is  a homotopy invariant  of  $\Aa^*$ (Lemma \ref{lemma:canonical})  and the first obstruction to its formality (Lemma \ref{lemma:first-obs-formality}). Furthermore, $H^* (\Aa^*)$ and $[\mu_3]$ are  complete  invariants  of  the DGCA homotopy class of  an $(r-1)$ connected  Poincar\'e DGCA of degree $n \le 4r-1$ admitting a Hodge  homotopy (Theorem \ref{thm:complete-invariant}).  In Section \ref{sec:HarrBM}, using the spectral sequence symmetric-to-Harrison, we  establish  the equivalence  between  the   Harrison cohomology class $[\mu_3]$ and the Bianchi-Massey  tensor (Theorem \ref{thm:HarrBM}).  In the last Section \ref{sec:conclusions}, we summarize  our results  and  suggest  some  problems  for  future   investigations.

\section{The small quotient algebra  and the unital $C_\infty$-algebra   of a  $(r-1)$ homologically connected ($r>1$)  compact manifold of dimension  up to $(\ell(r-1)+2)$ }\label{sec:ainftyreg}
 In this section, we briefly   recall the  concept  of  a Poincar\'e DCGA  admitting a Hodge  homotopy, introduced  and investigated by Fiorenza-Kawai-L\^e-Schwachh\"ofer in \cite{FKLS2021}, which  shall be utilized   to study unital $C_\infty$-algebras associated with $(r-1)$-connected  compact   manifolds of dimension up to $\ell(r-1)+2$, with $\ell\geq 4$,  in our paper. Then under the assumption the algebra admits a Hodge homotopy\footnote{This assumption is always verified by the de Rham algebra of a compact oriented manifold.} we give a very quick proof of    a  theorem  by Zhou on the homotopy type  of  $(r-1)$ connected  Poincar\'e DGCAs  of degree  less than or equal to  $\ell(r-1)+2$    (Theorem \ref{thm:A2-algebra}).   This extend the analogous result in \cite{FKLS2021},  the  real (or  rational) homotopy  type   of a   smooth  that was derived under the assumption of degree  less than or equal to  $5r-3$, corresponding to $\ell=5$. We  also  give a proof  to a   Zhou's  conjecture (Theorem \ref{thm:zhou-s-conjecture})  and derive  from it  Cavalcanti's formality results (Corollary \ref{prop:cavalcanti1}).

\subsection{Poincar\'e-DGCA  of  Hodge type}\label{subs:PDGCA-Hodge}
Given  a  vector  space $V$ over   a field  $\mathbb{K}$ we denote by $V^\vee$ the dual space of $V$.
\begin{definition}\label{def:pdgca} cf. \cite[Def. 4.1]{LS04}, cf. \cite[Def. 2.7]{CN}, cf. \cite[Def. 2.1]{FKLS2021}. A  {\it Poincar\'e   DGCA $(\Aa^*, d)$ of degree  $n$}  over a  field  $\mathbb{K}$  is  a  DGCA
	$\Aa^* = \oplus_{k =0}^n \Aa^k$ whose  cohomology ring $H^* (\Aa)$ is finite  dimensional over $\mathbb{K}$ and has     a {\it fundamental class}
	$\int  \in  (H^n (\Aa))^\vee$  such that  the pairing, defined  by using the cup-product,
\begin{align}\label{eq:pairingh}
\la \alpha  ^k, \beta ^l \ra = \begin{cases} \int \alpha ^k \cdot \beta ^l    & \text{ if } k +l = n,\\
0  & \text{ else, } 
\end{cases}
\end{align}	
is non-degenerate, i.e.  $\la \alpha, H^* (\Aa) \ra =0$ if and only if  $\alpha = 0$. 
\end{definition}

The  pairing    on $H^* (\Aa)$  induces   a  pairing  on $\Aa^*$ as follows
\begin{align}\label{eq:pairing}
\la \alpha  ^k, \beta ^l \ra = \begin{cases} \int [\alpha ^k \cdot \beta ^l ]   & \text{ if } k +l = n,\\
 0  & \text{ else, } 
\end{cases}
\end{align}
where   $[\cdot ]$ stands for  the projection $ \Aa^n\supseteq \ker d \to H^n (\Aa^*)$. {The  Poincar\'e  DCGA  $(\Aa^*,d)$  is called {\it  non-degenerate}, if the   pairing $\la \cdot, \cdot \ra$ on $\Aa^*$ is non-degenerate.}

{
\begin{definition}\label{def:hodgehomotopy}
	Let $(\Aa^\ast, d, \cdot, \la -,- \ra)$ be a Poincar\'e DGCA. A \emph{Hodge homotopy} for $(\Aa^\ast, d, \cdot, \la -,- \ra)$ is a degree $-1$ operator $d^-\colon \Aa^\ast\to \Aa^{\ast-1}$ such that:
	\begin{equation}\label{eq:commute}
	d^-d^-=0;\quad  d^-dd^-=d^-;\quad d d^- d =d;
	\end{equation}
	\begin{equation}\label{eq:orthogonality}
	\langle \mathrm{Im}(d^-), \mathrm{Im}(d^-)\rangle =0;
	\quad 
	\langle \mathrm{Im}(\pi_{\mathcal{H}^\ast}), \mathrm{Im}(d^-)\rangle =0,
	\end{equation}
	where 
	\begin{equation}\label{eq:proj} \pi_{\mathcal{H}^\ast}:=\mathrm{id}_{\Aa^\ast}-[d,d^{-1}]. 	\end{equation}
\end{definition}	

\begin{remark}
It follows from \eqref{eq:commute} that the two endomorphisms $dd^-,d^-d\colon \Aa^\ast\to \Aa^\ast$ are orthogonal idempotents. So they both, and also  $\pi_\Hh$, are projection operators.
\end{remark}
\begin{definition}	
	The graded subspace $\mathcal{H}^\ast:=\mathrm{Im}(\pi_{\mathcal{H}^\ast})$ is called the \emph{harmonic subspace} for the Hodge homotopy $d^{-}$. \end{definition}
}


\begin{remark}
It follows from the definition that $d\pi_{\mathcal{H}^\ast}=\pi_{\mathcal{H}^\ast}d=d^-\pi_{\mathcal{H}^\ast}=\pi_{\mathcal{H}^\ast}d^-=0$, and that we have a direct sum decomposition
\[
\Aa^\ast= \mathcal{H}^\ast\oplus\underbrace{dd^-\Aa^\ast\oplus d^-d\Aa^\ast}_{{\mathcal{L}_{\mathcal{A}}^\ast}}
\]
with $({\mathcal{L}_{\mathcal{A}}^\ast},d)$ an acyclic complex and $d^-({\mathcal{L}_{\mathcal{A}}^\ast})\subseteq {\mathcal{L}_{\mathcal{A}}^\ast}$. Also, the composition
\[
\mathcal{H}^\ast\to \ker(d) \to H^\ast(\Aa)
\]
is an isomorphism of graded vector spaces.
\end{remark}

\begin{remark}
Since $\int d\omega=0$, the pairing on $\mathcal{A}^\ast$ satisfies
\[
\langle d\alpha, \beta\rangle =-(-1)^{\mathrm{deg}(\alpha)}\langle \alpha, d\beta\rangle.
\]
This implies that the direct sum decomposition $\Aa^\ast= \mathcal{H}^\ast\oplus{\mathcal{L}_{\mathcal{A}}^\ast}$ is a orthogonal direct sum decomposition: $\Aa^\ast= \mathcal{H}^\ast\oplus^\perp{\mathcal{L}_{\mathcal{A}}^\ast}$.
\end{remark}

\begin{example} \cite[Remark 2.7 (1)]{FKLS2021}\label{ex:d-}
	Let  $(M,g)$ be an $n$-dimensional  compact oriented Riemannian manifold, let $\Aa^* = \Aa^* (M)$ the de Rham algebra of $M$, let $\Hh^*$  be  the   space of harmonic forms  and let  $G = \Delta_g ^{-1}$ be the Green operator of $(M,g)$. Then integration over $M$ makes $\Aa^*$ a Poincar\'e DGCA of degree $n$, and 
the operator $d^-$ defined by
\[
\begin{cases}
d^{-}\bigr\vert_{\Hh^*} = 0 \\ \\ 
d^{-}\bigr\vert_{d \Aa^*  \oplus d^* \Aa} = G d^*
\end{cases}
\]	
	is a Hodge homotopy. 
\end{example}
\begin{remark}
A richer structure is obtained if instead of a single Hodge homotopy one has a pair of compatible Hodge homotopies. This is what happens, for instance, for K\"ahler manifolds, and the essential ingredient in  Deligne-Griffiths-Morgan-Sullivan proof of formality of compact K\"ahler manifolds \cite{DGMS1975}.
\end{remark}
\begin{definition}\label{def:connected}
A DGCA $\Aa^\ast$ is said to be $(r-1)$-connected, with $r\geq  2$ if $H^0(\Aa)=\mathbb{K}$ and $H^k(\Aa)=0$ for $1\leq k\leq r-1$.
\end{definition}
A crucial result from \cite{FKLS2021} is the following.
\begin{proposition}\label{prop:qsmall} (cf. \cite[Theorem  3.10,  (4.1)]{FKLS2021}).  Let $r \ge 2$ and let 
	  $\Aa^*$ be a $(r-1)$-connected   Poincar\'e  DGCA  of degree  $n$. If $\Aa^*$ admits a Hodge homotopy, then  there exists a nondegenerate finite dimensional Poincar\'e  DGCA  $(\mathcal{Q}^\ast,{d_{\Qq}})$ of degree  $n$ endowed with a Hodge homotopy ${d_{\Qq}^-}$, homotopy equivalent to $\Aa^\ast$ via an explicit zig-zag of quasi-isomorphisms and such that
\begin{equation}\label{eq:qsmall}
\begin{cases}
\mathcal{Q}^0=\mathbb{K}\\
\mathcal{Q}^k=0, & 1\leq k\leq r-1\\
\mathcal{Q}^k=\mathcal{H}^k, & r\leq k\leq 2r-2\\
\mathcal{Q}^{k}=\mathcal{H}^{k}\oplus {\Ll_\Qq^k} &  2r -1 \le k \le n-2r +1\\
\mathcal{Q}^k=\mathcal{H}^k, & n -2r +2\leq k\leq n-r\\
\mathcal{Q}^k=0, &   n-r+1\leq k \le  n-1\\
\mathcal{Q}^{n}=\mathcal{H}^n\cong \mathbb{K},
\end{cases}
\end{equation}
{with $\Ll^\ast_\Qq=d_\Qq d_\Qq^-\Qq^\ast\oplus d_\Qq^-d_\Qq \Qq^\ast$ an acyclic subcomplex of $(\Qq^\ast,d_\Qq)$.}
\end{proposition}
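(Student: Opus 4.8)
The plan is to produce $\mathcal{Q}^\ast$ from $\mathcal{A}^\ast$ by an explicit short chain of quasi-isomorphisms — a few quotients by acyclic differential ideals and one passage to a sub-DGCA, all built out of the Hodge homotopy $d^-$ — so that the end product has exactly the graded dimensions of \eqref{eq:qsmall} and a non-degenerate pairing. I write $\mathcal{E}^\ast:=d^-d\,\mathcal{A}^\ast$ and $\mathcal{C}^\ast:=dd^-\mathcal{A}^\ast$, so that $\mathcal{A}^\ast=\mathcal{H}^\ast\oplus\mathcal{E}^\ast\oplus\mathcal{C}^\ast$, $\mathcal{L}_\mathcal{A}^\ast=\mathcal{E}^\ast\oplus\mathcal{C}^\ast$, $\mathrm{Im}(d^-)=\mathcal{E}^\ast$, $d|_{\mathcal{C}^\ast}=0$, $d^-|_{\mathcal{H}^\ast\oplus\mathcal{E}^\ast}=0$, and $d$ restricts to an isomorphism $\mathcal{E}^k\xrightarrow{\ \sim\ }\mathcal{C}^{k+1}$ with inverse $d^-$. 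From $\mathcal{H}^\ast\cong H^\ast(\mathcal{A})$ the $(r-1)$-connectivity gives $\mathcal{H}^0=\mathbb{K}$ and $\mathcal{H}^k=0$ for $1\le k\le r-1$, and non-degeneracy of the cup pairing on cohomology gives $\mathcal{H}^k\cong(\mathcal{H}^{n-k})^\vee$, so $\mathcal{H}^k=0$ for $n-r+1\le k\le n-1$ and $\mathcal{H}^n\cong\mathbb{K}$.

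The first two steps produce a \emph{finite-dimensional non-degenerate} model with the prescribed low-degree shape. Let $\mathcal{N}^\ast:=\{a:\langle a,\mathcal{A}^\ast\rangle=0\}$ be the radical of the pairing; using \eqref{eq:orthogonality}, the Hodge relations \eqref{eq:commute}, and $\langle d\alpha,\beta\rangle=-(-1)^{\deg\alpha}\langle\alpha,d\beta\rangle$ one checks that $\mathcal{N}^\ast$ is a $d^-$-stable differential ideal with $\mathcal{N}^\ast\cap\mathcal{H}^\ast=0$, whence $\mathcal{A}^\ast\twoheadrightarrow\mathcal{A}^\ast/\mathcal{N}^\ast$ is a quasi-isomorphism; replacing $\mathcal{A}^\ast$ by the quotient (still $(r-1)$-connected, Poincar\'e of degree $n$, carrying the descended $d^-$) we may assume $\langle-,-\rangle$ is non-degenerate, so the pairings $\mathcal{E}^k\times\mathcal{C}^{n-k}\to\mathbb{K}$ are perfect. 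Next let $\mathcal{Q}_0^\ast$ be the smallest sub-DGCA of $\mathcal{A}^\ast$ containing $\mathcal{H}^\ast$ and stable under $d^-$: since $\mathcal{H}^\ast$ sits in degree $0$ and degrees $\ge r$, iterating ``multiply / apply $d^-$'' from $\mathcal{H}^\ast$ yields only words of bounded length (each $d^-$-then-multiply step costs at least $r-1$ in degree, and degrees are $\le n$), so $\mathcal{Q}_0^\ast$ is finite-dimensional; stability under $\pi_{\mathcal{H}^\ast},dd^-,d^-d$ gives $\mathcal{Q}_0^\ast=\mathcal{H}^\ast\oplus\mathcal{E}_0^\ast\oplus\mathcal{C}_0^\ast$ with $\mathcal{E}_0^\ast=\mathcal{Q}_0^\ast\cap\mathcal{E}^\ast$, $\mathcal{C}_0^\ast=\mathcal{Q}_0^\ast\cap\mathcal{C}^\ast$; and since the exact part of $\mathcal{Q}_0^\ast$ consists of $d^-$-images of products, which land in degrees $\ge 2r$, one has $\mathcal{C}_0^k=0$ for $k\le 2r-1$, hence $\mathcal{Q}_0^k\subseteq\mathcal{H}^k\oplus\mathcal{E}_0^k$ for $k\le 2r-1$. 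The inclusion $\mathcal{Q}_0^\ast\hookrightarrow\mathcal{A}^\ast$ is a quasi-isomorphism (onto cohomology because $\mathcal{H}^\ast\subseteq\mathcal{Q}_0^\ast$; injective because a class killed in $\mathcal{A}^\ast$ is represented by some $z\in\mathcal{C}_0^\ast$, and $z=d(d^-z)$ with $d^-z\in\mathcal{Q}_0^\ast$), so $\mathcal{L}_{\mathcal{Q}_0}^\ast:=\mathcal{E}_0^\ast\oplus\mathcal{C}_0^\ast$ is acyclic. A final radical quotient $\mathcal{Q}_0^\ast\twoheadrightarrow\mathcal{Q}_1^\ast$ (acyclic and $d^-$-stable by the same argument as for $\mathcal{N}^\ast$, so $d^-$ descends) makes the model non-degenerate while preserving everything above.

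It remains to trim the ``high collar''. Let $\mathcal{J}^\ast$ be the differential ideal of $\mathcal{Q}_1^\ast$ generated by $\bigoplus_{n-2r+1\le k\le n-1}\mathcal{E}_1^k$, so that $\mathcal{J}^\ast$ also contains $\mathcal{C}_1^k$ for $n-2r+2\le k\le n$; its linear part $\bigoplus\mathcal{E}_1^k\oplus\bigoplus\mathcal{C}_1^{k+1}$ is acyclic with contraction $d^-$, and this propagates to $\mathcal{J}^\ast$ by the standard homotopy argument, so $\mathcal{Q}_1^\ast\twoheadrightarrow\mathcal{Q}^\ast:=\mathcal{Q}_1^\ast/\mathcal{J}^\ast$ is a quasi-isomorphism. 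The essential point is that $\mathcal{J}^\ast$ meets $\mathcal{H}^\ast$ trivially: a product $e\cdot q$ of a generator $e\in\mathcal{E}_1^k$, $k\ge n-2r+1$, with $q\in\mathcal{Q}_1^\ast$ lands in degree $\le n$ only if $\deg q\le 2r-1$, hence $q\in\mathcal{H}^\ast\oplus\mathcal{E}_1^\ast$ by the previous paragraph, and then $\pi_{\mathcal{H}^\ast}(eq)=0$ because $\langle e,\mathcal{H}^\ast\rangle=\langle e,\mathcal{E}^\ast\rangle=0$ by \eqref{eq:orthogonality} (and similarly for $de\cdot q$, where now $dq=0$); so $\mathcal{J}^\ast\subseteq\mathcal{L}_{\mathcal{Q}_1}^\ast$, while a degree count gives $\mathcal{J}^l=\mathcal{L}_{\mathcal{Q}_1}^l$ for $n-2r+2\le l\le n$ and $\mathcal{J}^l=0$ for $l\le n-2r$. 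This yields exactly the shape \eqref{eq:qsmall}, with $\mathcal{L}_\mathcal{Q}^\ast$ acyclic and supported in degrees $2r-1\le k\le n-2r+1$; the pairing on $\mathcal{Q}^\ast$ is non-degenerate because its radical pulls back to that of the non-degenerate $\mathcal{Q}_1^\ast$, and $\mathcal{Q}^\ast$ carries a Hodge homotopy $d_\mathcal{Q}^-$. Composing all the maps gives the asserted zig-zag.

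\emph{Main obstacle.} The hard part is the last step. A differential ideal only grows in degree, so the low collar must be collapsed ``for free'' inside $\mathcal{Q}_0^\ast$ via connectivity, while the high collar must be killed by an ideal chosen to be at once (i) acyclic, (ii) contained in $\mathcal{L}_{\mathcal{Q}_1}^\ast$ so that no cohomology is destroyed, and (iii) of the exact degree-reach required by \eqref{eq:qsmall}. What makes (i)--(iii) compatible is the orthogonality axioms \eqref{eq:orthogonality} — giving $\langle\mathcal{E},\mathcal{E}\rangle=\langle\mathcal{E},\mathcal{H}\rangle=0$ — together with $(r-1)$-connectivity, which keeps $\mathcal{Q}_0^\ast$ free of exact elements in degrees $\le 2r-1$, so that products against the high-degree generators can never acquire a harmonic component. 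Managing this simultaneously with the non-degeneracy reductions and with a Hodge homotopy surviving on $\mathcal{Q}^\ast$ is the bookkeeping carried out in \cite[\S3]{FKLS2021} (close in spirit to the Lambrechts--Stanley Poincar\'e-duality models, cf.\ \cite{LS04}).
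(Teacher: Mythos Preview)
The paper does not give its own proof of this proposition; it is quoted from \cite[Theorem 3.10, (4.1)]{FKLS2021}. Your sketch follows that construction (as you acknowledge) and the overall architecture is correct, but there is a redundancy you have not noticed that makes the presentation internally inconsistent.

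Your ``high collar'' Step 4 is vacuous once Step 3 has been performed. The very argument you give in Step 4 to show $\pi_{\mathcal{H}^\ast}(eq)=0$ --- namely that any $q$ with $\deg(eq)\le n$ has degree $\le 2r-1$ and hence lies in $\mathcal{H}\oplus\mathcal{E}$ by the low-degree analysis of Step 2, so that $\langle e,q\rangle=0$ by \eqref{eq:orthogonality} --- actually shows directly that each generator $e\in\mathcal{E}_0^{\ge n-2r+1}$ satisfies $\langle e,\mathcal{Q}_0\rangle=0$, i.e.\ lies in $\mathrm{rad}(\mathcal{Q}_0)$. The same reasoning gives $\mathcal{C}_0^{\ge n-2r+2}\subseteq\mathrm{rad}(\mathcal{Q}_0)$. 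So the radical quotient of Step 3 already trims the high collar, and in $\mathcal{Q}_1$ your ideal $\mathcal{J}$ is zero. You can see this another way: you assert both that $\mathcal{J}\subseteq\mathcal{L}_{\mathcal{Q}_1}$ (so $\langle\mathcal{J},\mathcal{Q}_1\rangle=0$, since $\mathcal{J}$ is an ideal contained in the acyclic part) and that the pairing on $\mathcal{Q}=\mathcal{Q}_1/\mathcal{J}$ is non-degenerate because it pulls back to that of $\mathcal{Q}_1$; together these force $\mathcal{J}\subseteq\mathrm{rad}(\mathcal{Q}_1)=0$. The upshot is that your zig-zag really terminates at $\mathcal{Q}_1$, and the shape \eqref{eq:qsmall} should be read off there directly. (If you prefer to keep an explicit ideal-quotient step, perform it on $\mathcal{Q}_0$ \emph{before} passing to the radical quotient.)

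Two minor points: the ``standard homotopy argument'' for acyclicity of $\mathcal{J}$ is too quick as stated --- ideals generated by acyclic subcomplexes are not automatically acyclic --- though in this case a direct degree-by-degree check works; and the finite-dimensionality of $\mathcal{Q}_0$ deserves a cleaner argument than ``words of bounded length'' (bound the number of positive-degree harmonic leaves in a tree word by observing that such a word with $w$ leaves and at most $w-1$ applications of $d^-$ has degree $\ge w(r-1)+1$).
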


Note that  the Poincar\'e  DGCA  $(\Qq^*, {d_\Qq})$ in  Proposition \ref{prop:qsmall} is defined  uniquely by a  Hodge homotopy $d^-$ {for $(\Aa^\ast,d)$}. It is called the {\it small quotient algebra  of { $(\Aa^*,d,d^-)$}}  in\cite[Defintion 3.2]{FKLS2021}.

{
\begin{remark}Even when $(\Aa^\ast,d)$ is formal, one should not expect $\mathcal{L}_\Qq^\ast$ to be zero. That is, also for formal DGCAs admitting Hodge homotopies the small quotient algebra does not generally consist only of harmonic forms.
\end{remark}}

\subsection{The   unital $C_\infty$-algebra  associated with $\Qq^*$}\label{subs:cinftyq} 
Let $j\colon \Hh^ * \to \Qq^*$ be the  inclusion of the graded subspace of harmonic elements into $\Aa^\ast$.  Then  $j\colon  (\Hh^*, {d_\Hh} = 0) \to (\Qq^*, {d_\Qq})$ is a quasi-isomorphism of cochain complexes.
By definition of Hodge homotopy, $({d_\Qq^-}, \pi_{\Hh^*}, j)$  are homotopy data  of cochain  complexes, i.e.  we have the   following   commutative diagram 

\begin{eqnarray*}
	&\xymatrix{     *{ \quad \ \  \quad (\Qq^*, {d_\Qq})\ } \ar@(dl,ul)[]^{{d_\Qq^-}}\ \ar@<0.5ex>[r]^{\pi_{\Hh^*}} & *{\
			(\Hh^*, {d_\Hh}= 0).\quad \ \  \ \quad }  \ar@<0.5ex>[l]^{j}}
\end{eqnarray*}

The DGCA structure on $\Qq^*$ is a particular instance of a {\it unital  $C_\infty$-algebra} with unit $1\in \mathbb{K}=\Qq^0$, and the homotopy ${d_\Qq^-}$ satisfies 
side conditions \cite[Theorems 10, 12]{CG2008}
\begin{equation}\label{eq:side}
\pi_{\Hh^*}\circ {d_\Qq^-} = 0, \;   ({d_\Qq^-})^2 = 0,\:   {d_\Qq^-} (1) = 0. 
\end{equation}

  Thus we  can  transfer  the unital DGCA structure  of  $(\Qq^*, {d_\Qq})$ to a minimal unital $C_\infty$-structure on its  harmonic subspace  $\Hh^*$ \cite[Theorems 10, 12]{CG2008}.  
  Since this homotopy transfer is obtained by means of a Hodge homotopy, we will refer to it as a \emph{Hodge homotopy transfer.}
  The minimal unital $C^\infty$-structure  on $\Hh^*$ is defined  by  a sequence of  operations
 $m_k: \otimes ^k  \Hh^\ast \to \Hh^\ast[2-k]$, $k\geq 2$, whose unitality means that 
 \begin{equation}\label{eq:unital}
m_2 (1, a) =  m_2 (a, 1)  = a, \qquad \text  {  and } \qquad  m_n   (a_1, \ldots, 1, \ldots, a_n) = 0 \quad n\geq 3,
\end{equation}
and  which can be conveniently determined   
 by the  Kontsevich-Soilbeman  formula  for homotopy transfer \cite{KS2001} by \cite[Theorem 12]{CG2008}, cf. \cite[Lemma 48]{GCTV2012}, \cite[Theorem 5.2]{VdL2003}. 
 Using these formulas we obtain the following short proof of the main theorem from \cite{Zhou2019}, extending  our previous result with  Kawai and Schwachh\"ofer \cite[Theorem 4.1]{FKLS2021}, whose assertion is the next to  last statement of Theorem \ref{thm:A2-algebra}.

\begin{theorem}\label{thm:A2-algebra}
Let $r\geq 2$ and let $\Aa^\ast$ be a $(r-1)$-connected Poincar\'e DGCA of degree $n$ admitting a Hodge homotopy.
  If $n\leq \ell (r-1)+2$, with $\ell\geq 4$, then $\Hh^\ast$ carries a minimal unital $C_\infty$-algebra structure making it $C_\infty$-quasi-isomorphic to $\Aa^\ast$, whose multiplication $m_2$ is 
  $m_2(\alpha,\beta)=\pi_{\mathcal{H}^\ast}(\alpha\cdot\beta)$, and whose multiplications $m_k$ vanish for $k\geq \ell-1$. In particular, all the multiplications $m_k$ with $k\geq 5$ vanish if $n\leq 6r-4$ all the multiplications $m_k$ with $k\geq 4$ vanish if $n\leq 5r-3$. Also, all the multiplications $m_k$ with $k\geq 3$ vanish if $n\leq 4r-2$, and so  $\Aa^\ast$ is formal in this case.
  \end{theorem}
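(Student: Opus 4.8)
The plan is to first replace $\Aa^\ast$ by the small quotient algebra $\Qq^\ast$ of Proposition \ref{prop:qsmall} --- a finite-dimensional $(r-1)$-connected non-degenerate Poincar\'e DGCA of degree $n$ with the explicit shape \eqref{eq:qsmall} --- and then perform the Hodge homotopy transfer of its unital DGCA structure along $(d_\Qq^-,\pi_{\Hh^\ast},j)$ to a minimal unital $C_\infty$-structure $(m_k)_{k\geq 2}$ on $\Hh^\ast$ (a genuine $C_\infty$-structure, since the product of $\Qq^\ast$ is graded commutative and the transfer formula preserves this). The $C_\infty$-quasi-isomorphism with $\Aa^\ast$ is then immediate: the homotopy transfer theorem produces a $C_\infty$-quasi-isomorphism relating $\Hh^\ast$ and $\Qq^\ast$, and $\Qq^\ast$ is joined to $\Aa^\ast$ by the zig-zag of quasi-isomorphisms of Proposition \ref{prop:qsmall}; the identity $m_2(\alpha,\beta)=\pi_{\Hh^\ast}(\alpha\cdot\beta)$ is just the lowest term of the transfer formula, since $j$ is the inclusion $\Hh^\ast\hookrightarrow\Qq^\ast$. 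So all the content is in the vanishing of $m_k$ for $k\geq\ell-1$, which I would establish by a degree count refined, in one borderline case, by the orthogonality conditions \eqref{eq:orthogonality}.

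For the degree count: by unitality \eqref{eq:unital} and multilinearity it is enough to evaluate $m_k$, $k\geq 3$, on homogeneous arguments $a_1,\dots,a_k$ lying in $\Hh^{\geq r}$, because $\Hh^\ast\cong H^\ast(\Aa^\ast)$ is supported in $\{0\}\cup[r,n-r]\cup\{n\}$ and $\Hh^0=\mathbb K\cdot 1$. Then $m_k(a_1,\dots,a_k)$ is homogeneous of degree $D:=\sum_i\deg a_i+2-k\geq k(r-1)+2$. If $D>n$ or $D\in[n-r+1,n-1]$ the value vanishes for degree reasons (using again the support of $\Hh^\ast$); and if $D\leq n-r$, then $k(r-1)+2\leq D\leq n-r\leq\ell(r-1)+2-r$ forces $k(r-1)\leq(\ell-1)(r-1)-1$, hence $k\leq\ell-2$. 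Thus for $k\geq\ell-1$ the only surviving possibility is $D=n$, i.e. $m_k(a_1,\dots,a_k)\in\Hh^n$; moreover $k(r-1)+2\leq D=n\leq\ell(r-1)+2$ already forces $k\leq\ell$, so only $k\in\{\ell-1,\ell\}$ remain.

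The borderline case $D=n$ is the crux --- this is exactly where plain degree counting is insufficient --- and I would settle it with the Kontsevich--Soibelman tree formula together with \eqref{eq:orthogonality}. Write $m_k=\sum_T m_T$, the sum over planar binary rooted trees $T$ with $k$ leaves, where leaves carry $j$, internal vertices carry the product of $\Qq^\ast$, internal edges carry $d_\Qq^-$, and the root carries $\pi_{\Hh^\ast}$. Since $\Qq^n=\Hh^n$ and $\Ll_\Qq^n=0$ in \eqref{eq:qsmall}, $\pi_{\Hh^\ast}$ is the identity on $\Qq^n$, so for $D=n$ one gets $m_k(a_1,\dots,a_k)=\sum_T\pm\,(y_1^T\cdot y_2^T)$, where $y_1^T,y_2^T$ are the two elements of $\Qq^\ast$ entering the root vertex of $T$. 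Now apply the fundamental class: as $\Qq^\ast$ is a degree-$n$ Poincar\'e DGCA with $\Qq^{n-1}=0$, one has $\int(x\cdot y)=\langle x,y\rangle$ whenever $\deg x+\deg y=n$. For each $T$, each $y_i^T$ is either a leaf input --- hence in $\mathrm{Im}(\pi_{\Hh^\ast})=\Hh^\ast$ --- or the output of a proper subtree passed through a $d_\Qq^-$ --- hence in $\mathrm{Im}(d_\Qq^-)$; and since $k\geq 3$, at least one of $y_1^T,y_2^T$ is of the second kind. By the two orthogonality relations in \eqref{eq:orthogonality} (the pairing being graded symmetric), $\langle y_1^T,y_2^T\rangle=0$ for every $T$, so $\int m_k(a_1,\dots,a_k)=0$; since $m_k(a_1,\dots,a_k)\in\Hh^n$ and $\int\colon\Hh^n\xrightarrow{\ \sim\ }\mathbb K$, it vanishes. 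This proves $m_k=0$ for all $k\geq\ell-1$.

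The ``in particular'' assertions follow by specialization: $\ell=6$ gives $m_k=0$ for $k\geq5$ when $n\leq6r-4$; $\ell=5$ gives $m_k=0$ for $k\geq4$ when $n\leq5r-3$; and $\ell=4$ gives $m_k=0$ for $k\geq3$ when $n\leq4r-2$, so that only $m_2$ survives, $(\Hh^\ast,m_2)$ is the cohomology algebra with zero differential and no higher operations, and $\Aa^\ast$ is therefore formal (via the cited results of Kadeishvili identifying the weak equivalence class of the de Rham DGCA with the homotopy type of its transferred $C_\infty$-algebra). To summarize: the naive degree estimate only yields $m_k=0$ for $k\geq\ell+1$, and the gain of two --- i.e. the treatment of $k\in\{\ell-1,\ell\}$, equivalently $D=n$ --- rests entirely on the Poincar\'e pairing, encoded in the orthogonality properties of the Hodge homotopy; everything else is bookkeeping with the ranges in \eqref{eq:qsmall}.
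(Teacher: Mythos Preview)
Your proof is correct and follows essentially the same approach as the paper: reduce to the small quotient algebra $\Qq^\ast$, homotopy-transfer to a minimal unital $C_\infty$-structure on $\Hh^\ast$, use unitality and the support of $\Hh^\ast$ to reduce the vanishing of $m_k$ for $k\geq\ell-1$ to the single degree $D=n$, and then kill that case via the tree formula together with the orthogonality relations \eqref{eq:orthogonality} and the isomorphism $\int\colon\Hh^n\to\mathbb{K}$. Your exposition is slightly more explicit in a couple of places (the case split on $D$, the observation that only $k\in\{\ell-1,\ell\}$ survive, and the use of $\Qq^{n-1}=0$ to identify $\int(x\cdot y)$ with $\langle x,y\rangle$), but the argument is the same as the paper's.
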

\begin{proof}
The DGCA $\Qq^\ast$ is  equivalent 
to $\Aa^*$. Hence, to  prove   Theorem \ref{thm:A2-algebra}  we  need only  to show that $\Hh^\ast$ carries a minimal unital $C_\infty$-algebra structure whose multiplication $m_2$ is 
  $\mu_2(\alpha,\beta)=\pi_{\mathcal{H}^\ast}(\alpha\cdot\beta)$ and whose multiplications $m_k$ vanish for $k\geq \ell -1$, making it quasi-isomorphic to $\Qq^\ast$. We can show this by means of the homotopy transfer theorem, using \cite[Theorem 10]{CG2008}. This endows $\Hh^\ast$ with an explicit unital minimal $C_\infty$-algebra structure making it $C_\infty$-quasi-isomorphic to  $\Qq^\ast$, and so to prove the statement in the theorem we only need to show that this unital minimal $C_\infty$-algebra structure actually has 
  $\mu_2(\alpha,\beta)=\pi_{\mathcal{H}^\ast}(\alpha\cdot\beta)$ and $m_k$ vanishing for $k\geq \ell-1$.

\par

 One has a convenient tree summation formula to express the higher multiplications $m_k$ obtained by homotopy transfer, see \cite[Theorem 12]{CG2008}, \cite{KS2001}. Namely, $m_k$ can be expressed as a sum over rooted trivalent trees with $k$ leaves.  Each tail edge of such
a tree is decorated by the inclusion $j\colon \Hh^\ast\hookrightarrow \Qq^\ast$, each internal edge is decorated by the operator ${d_\Qq^-}\colon \Qq^\ast\to \Qq^{\ast-1}$ and
the root edge is decorated by the operator $\pi_{\Hh^\ast}\colon  \Qq^\ast\to \Hh^\ast$; every
internal vertex is decorated by the multiplication $\cdot$ in $\Qq^\ast$.  The only graph with two leaves appearing in the tree summation formula is
\[
\begin{xy}
,(-12,-8);(-7.2,-4.8)*{\,\scriptstyle{j}\,}**\dir{-}
,(-7.2,-4.8)*{\,\scriptstyle{j}\,};
(0,0)*{\,\,\scriptstyle{\mu}\,}**\dir{-}?>*\dir{>}
,(-12,8);(-6,4)*{\,\scriptstyle{j}\,}**\dir{-}
,(-6,4)*{\,\scriptstyle{j}\,};
(0,0)*{\,\,\scriptstyle{\mu}\,}**\dir{-}?>*\dir{>}
,(0,0)*{\,\,\scriptstyle{\mu}\,};
(9.6,0)*{\,\scriptstyle{\pi_{\Hh^\ast}}\,}**\dir{-}
,(9.6,0)*{\,\scriptstyle{\pi_{\Hh^\ast}}\,};
(19.2,0)**\dir{-}?>*\dir{>}
\end{xy}.
\]
This gives the announced result for $m_2$, so to conclude the proof we only need to show that $m_k$ vanishes for $k\geq \ell-1$.
Since the multiplications $m_k$ define a unital $C_\infty$-algebra structure, the element $
m_k(\alpha_1,\dots, \alpha_{k})$
vanishes if any of the homogeneous entries $\alpha_i$ has degree zero. We can therefore assume $\deg(\alpha_i)\geq r$ for any $i=1,\dots,k$. Then $m_k(\alpha_1,\dots, \alpha_{k})$ will have degree $h$ with $h\geq kr+(2-k)=k(r-1)+2$. If $k\geq \ell-1$ we have $h\geq (\ell-1)(r-1)+2=\ell(r-1)+2-(r-1)\geq n-r+1$. The only possibly nonzero result is then obtained with $h=n$. Let us compute
\[
\int m_k(\alpha_1,\dots, \alpha_{k}).
\] 
Since $k\geq \ell-1\geq 3$, the root of a tree $T$ contributing to $m_k$ will have a neighbourhood of one of the following forms:
\[
\begin{xy}
,(-12,-8);(-7.2,-4.8)*{\,\scriptstyle{j}\,}**\dir{-}
,(-7.2,-4.8)*{\,\scriptstyle{j}\,};
(0,0)*{\,\,\scriptstyle{\mu}\,}**\dir{-}?>*\dir{>}
,(-12,8);(-6,4)*{\,\scriptstyle{{d_\Qq^-}}\,}**\dir{-}
,(-6,4)*{\,\scriptstyle{{d_\Qq^-}}\,};
(0,0)*{\,\,\scriptstyle{\mu}\,}**\dir{-}?>*\dir{>}
,(0,0)*{\,\,\scriptstyle{\mu}\,};
(9.6,0)*{\,\scriptstyle{\pi_{\Hh^\ast}}\,}**\dir{-}
,(9.6,0)*{\,\scriptstyle{\pi_{\Hh^\ast}}\,};
(19.2,0)**\dir{-}?>*\dir{>}
\end{xy}\qquad;
\qquad
\begin{xy}
,(-12,-8);(-7.2,-4.8)*{\,\scriptstyle{{d_\Qq^-}}\,}**\dir{-}
,(-7.2,-4.8)*{\,\scriptstyle{{d_\Qq^-}}\,};
(0,0)*{\,\,\scriptstyle{\mu}\,}**\dir{-}?>*\dir{>}
,(-12,8);(-6,4)*{\,\scriptstyle{{d_\Qq^-}}\,}**\dir{-}
,(-6,4)*{\,\scriptstyle{{d_\Qq^-}}\,};
(0,0)*{\,\,\scriptstyle{\mu}\,}**\dir{-}?>*\dir{>}
,(0,0)*{\,\,\scriptstyle{\mu}\,};
(9.6,0)*{\,\scriptstyle{\pi_{\Hh^\ast}}\,}**\dir{-}
,(9.6,0)*{\,\scriptstyle{\pi_{\Hh^\ast}}\,};
(19.2,0)**\dir{-}?>*\dir{>}
\end{xy}.
\]
In degree $n$ all elements in $\Qq^\ast$ are harmonic, so the projection $\pi_{\Hh^\ast}$ is the identity in degree $n$. But then the orthogonality relations \eqref{eq:orthogonality} tell us that, if we denote by $m_T$ the contribution from the tree $T$ to $m_k$, then we have 
\[
\int m_T(\alpha_1,\dots,\alpha_k)=0.
\]
Since $\int\colon \Qq^{n}\to \mathbb{K}$ is an isomorphism, this gives $m_T(\alpha_1,\dots, \alpha_{k})=0$ for any tree $T$  contributing to $m_k$ and so $m_k(\alpha_1,\dots, \alpha_{k})=0$. 
  
\end{proof}

\begin{corollary}[Miller]\label{cor:miller} Let $r\geq 2$ and let $M$ be an $(r-1)$-connected compact manifold of dimension  less than or equal to $4r-2$. Then $M$ is formal.
\end{corollary}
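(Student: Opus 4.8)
The plan is to deduce Corollary \ref{cor:miller} directly from Theorem \ref{thm:A2-algebra} by choosing the right value of $\ell$, after first passing from the manifold $M$ to its de Rham algebra. First I would recall that for a compact oriented manifold $M$ of dimension $n$, the de Rham algebra $\Aa^\ast(M)$, equipped with integration against a Riemannian volume form, is a Poincar\'e DGCA of degree $n$ admitting a Hodge homotopy: this is precisely Example \ref{ex:d-}, where the Hodge homotopy is $d^- = G d^\ast$ on $d\Aa^\ast \oplus d^\ast\Aa^\ast$ and $d^-=0$ on harmonic forms. If $M$ is not orientable one passes to the orientation double cover or works with twisted coefficients; alternatively one simply notes that formality is a statement about $H^\ast(M,\R)$ and its rational/real homotopy type and reduces to the orientable case, so I would add a sentence handling this. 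Since $M$ is $(r-1)$-connected as a manifold, its cohomology satisfies $H^0(\Aa^\ast(M)) = \R$ and $H^k(\Aa^\ast(M))=0$ for $1\le k\le r-1$, so $\Aa^\ast(M)$ is an $(r-1)$-connected Poincar\'e DGCA in the sense of Definition \ref{def:connected}.

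Next I would apply Theorem \ref{thm:A2-algebra} with $\ell = 4$. The hypothesis $n \le \ell(r-1)+2$ becomes $n \le 4(r-1)+2 = 4r-2$, which is exactly the hypothesis of the corollary, and $\ell = 4 \ge 4$ is allowed. The theorem then provides a minimal unital $C_\infty$-algebra structure on $\Hh^\ast$ (hence on $H^\ast(M,\R)$ via the canonical isomorphism $\Hh^\ast \xrightarrow{\sim} H^\ast(\Aa^\ast)$), $C_\infty$-quasi-isomorphic to $\Aa^\ast(M)$, whose multiplications $m_k$ vanish for all $k \ge \ell - 1 = 3$. Thus the only surviving operation is $m_2(\alpha,\beta) = \pi_{\Hh^\ast}(\alpha\cdot\beta)$, which corresponds under the canonical isomorphism to the cup product on $H^\ast(M,\R)$. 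So $\Aa^\ast(M)$ is $C_\infty$-quasi-isomorphic to the DGCA $(H^\ast(M,\R), d=0, \cup)$ — this is exactly what the last sentence of Theorem \ref{thm:A2-algebra} already records as ``formal in this case.''

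Finally I would spell out why ``$C_\infty$-quasi-isomorphic to its cohomology algebra with zero differential'' is the same as formality in the classical Sullivan sense for $M$. Here I would invoke the correspondence, recalled in the Introduction via Kadeishvili's results \cite{Kadeishvili1988, Kadeishvili1993, Kadeishvili2009, Kadeishvili2023}, between the DGCA weak-equivalence class of the de Rham forms and the homotopy type of the associated $C_\infty$-algebra obtained by homotopy transfer: a DGCA is formal (weakly equivalent to its cohomology) if and only if the transferred minimal $C_\infty$-structure is $C_\infty$-isomorphic to one with all higher operations vanishing. Since Theorem \ref{thm:A2-algebra} exhibits such a structure, $\Aa^\ast(M)$ is formal, hence $M$ is formal.

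The routine parts (verifying the hypotheses of Theorem \ref{thm:A2-algebra}, plugging in $\ell=4$) are immediate; the only point requiring a word of care is the non-orientable case, which I expect to be the main — and only — obstacle, and it is dispatched either by passing to a finite cover or by observing that the relevant $C_\infty$-transfer machinery applies to $\Aa^\ast(M)$ with a Poincar\'e--Lefschetz-type duality pairing. I would therefore present the proof as: reduce to $\Aa^\ast(M)$, verify it is an $(r-1)$-connected Poincar\'e DGCA of degree $n \le 4r-2$ admitting a Hodge homotopy, apply Theorem \ref{thm:A2-algebra} with $\ell = 4$ to get vanishing of $m_k$ for $k \ge 3$, and conclude formality.
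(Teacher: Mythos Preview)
Your proposal is correct and matches the paper's approach: the corollary is stated without proof immediately after Theorem \ref{thm:A2-algebra}, since the last sentence of that theorem already gives formality for $n\le 4r-2$, and one only needs Example \ref{ex:d-} to see that $\Aa^\ast(M)$ is a Poincar\'e DGCA admitting a Hodge homotopy. One small remark: your worry about orientability is unnecessary, since an $(r-1)$-connected manifold with $r\ge 2$ is simply connected and hence automatically orientable.
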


\begin{remark}\label{rem:enrichment} (1)  The original  proof  by Miller   \cite{Miller1979} using  the Quillen theory  and the  other  two  proofs  of  Miller's results  using      Sullivan minimal  model   by  Fernandez-Munos \cite{FM2005} and Felix-Oprea-Tanr\'e \cite[Proposition 3.10, p. 110]{FOT2008}, as well as Zhou's proof of \cite[Theorem 3.2]{Zhou2019} are considerably  longer and less elementary than our proof of Theorem \ref{thm:A2-algebra}. This makes the proof of Theorem \ref{thm:A2-algebra} a good example of the effectiveness of the use of Hodge homotopies in real homotopy theory.
		
(2) Recall that we have a linear isomorphism $\mathcal{H}^\ast\to \ker({d_\Qq})\to H^\ast(\Qq)$ mapping each element in $\mathcal{H}^\ast$ to its cohomology class. One can use it to transfer the unital $C_\infty$-structure on $\mathcal{H}^\ast$ from Theorem \ref{thm:A2-algebra} to a unital $C_\infty$-structure on $H^\ast(\Qq)$. This transferred structure is a \emph{unital $C_\infty$-enrichment} of the DGCA-structure on $H^\ast(\Qq)$ induced by the multiplication $\cdot$ on $\Qq^\ast$. Indeed, let $\iota\colon H^\ast(\Qq)\to \mathcal{H}^\ast$ be the inverse of $[-]\colon \mathcal{H}^\ast\to  H^\ast(\Qq)$, and let $a,b$ be two cohomology classes in $H^\ast(\Qq)$. Let $\hat{a},\hat{b}\in \Qq^\ast$ be such that $[\hat{a}]=a$ and $[\hat{b}]=b$, and let $\alpha=\pi_{\mathcal{H}^\ast}(\hat{a})$ and $\beta=\pi_{\mathcal{H}^\ast}(\hat{b})$, respectively. Then
\[
\alpha=\hat{a} -{d_\Qq^-}{d_\Qq}\hat{a}-{d_\Qq}{d_\Qq^-}\hat{a}= \hat{a}-{d_\Qq}{d_\Qq^-}\hat{a},
\]
and so $[\alpha]=a$. Therefore $\iota(a)=\alpha$. Similarly, $\iota(b)=\beta$. Therefore, one finds
\begin{align*}
[m_2(\iota(a),\iota(b))]&=[m_2(\alpha,\beta)]=[\pi_{\mathcal{H}^\ast}(\alpha\cdot\beta)]\\
&=
[\alpha\cdot\beta-{d_\Qq^-}{d_\Qq}(\alpha\cdot\beta)-{d_\Qq}{d_\Qq^-}(\alpha\cdot\beta)]\\
&=[\alpha\cdot\beta]=[\alpha]\cdot[\beta]=a\cdot b.
\end{align*}
\end{remark}

\subsection{Zhou's conjecture and  two Cavalcanti's   formality  results}\label{subs:zhou}

In this subsection  we    prove  Theorem \ref{thm:zhou-s-conjecture}, which was conjectured by Zhou in \cite{Zhou2019}. This  theorem   is a generalization  of two Cavalcanti's    formality  results (Corollary \ref{prop:cavalcanti1})    and     a   variation  of  Theorem \ref{thm:A2-algebra}  in the  previous  subsection. 

 We begin with the following   remarks on  operations $m_k$  in  our minimal  unital $C_\infty$-algebra obtained from  a Hodge  homotopy   and the associated  small  quotient  algebra $(\Qq^*,  \cdot  , {d_\Qq}, {d_\Qq^-})$.

\begin{remark}
Let $r\geq 2$ and let $\Aa^\ast$ be a $(r-1)$-connected Poincar\'e DGCA of degree $n$ admitting a Hodge homotopy.
The explicit expression for $m_k$  are derived by using \cite[Theorem 3.4]{Merkulov1999}, \cite[Theorem 12]{CG2008}. One considers the recursive formula
\begin{align}\label{eq:hat-mk}
\notag \widehat{m}_2(\alpha_1,\alpha_2)&=\alpha_1\cdot\alpha_2,\\
\notag \widehat{m}_k (\alpha_1, &\dots, \alpha_k) =(-1)^{k-1}{d_\Qq^-}\widehat{m}_{k-1}(\alpha_1,\dots, \alpha_{k-1})\cdot\alpha_k\\
\notag & - (-1)^{k\deg(\alpha_1)} \alpha_1 \cdot {d_\Qq^-}\widehat{m}_{k-1}(\alpha_2,\cdots, \alpha_k)\\
&-\sum_{
i=2}^{k-2}
(-1)^{\nu}{d_\Qq^-}\widehat{m}_i(\alpha_1,\dots, \alpha_i) \cdot {d_\Qq^-}\widehat{m}_{k-i}(\alpha_{i+1},\dots, \alpha_k),
\end{align}
where { the $\alpha_i$'s are element in $\Hh^\ast\subseteq \Qq^\ast$ and $\cdot$ is the multiplication in the DGCA $(\Qq^\ast,d_{\Qq})$, and where} $\nu=i+(k-i-1)(\deg(\alpha_1)+\cdots+\deg(\alpha_i))$. Then the multiplication $m_k$ is defined by
\begin{equation}\label{eq:mk}
{m}_k (\alpha_1, \dots, \alpha_k)=\pi_{\mathcal{H}}\left(\widehat{m}_k (\alpha_1, \dots, \alpha_k)\right).
\end{equation}

One inductively sees that for any $k\geq 3$ and for any $\alpha\in \mathcal{H}^r$, one has
\begin{equation}\label{eq:hat-rrrrrrrr}
\widehat{m}_k(\alpha,\alpha,\dots,\alpha)=0.
\end{equation}
The base of the induction are given by $k=3$ and $k=4$.  For $k=3$, \eqref{eq:hat-mk} gives
\begin{equation}\label{eq:m3}
\widehat{m}_3 (\alpha, \beta, \gamma) =  {d_\Qq^-} (\alpha \cdot \beta) \cdot\gamma  - (-1)^{\deg  \alpha}  \alpha\cdot  {d_\Qq^-}  (\beta \cdot \gamma)
\end{equation}
and one immediately sees that $m_3(\alpha,\alpha,\alpha)=0$ both when $r$ is even and when $r$ is odd. For $k=4$, \eqref{eq:mk} together with the vanishing we just proved for $\widehat{m}_3(\alpha,\alpha,\alpha)$, gives
\[
\widehat{m}_4 (\alpha, \alpha, \alpha,\alpha)=-  {d_\Qq^-} (\alpha^2) \cdot  {d_\Qq^-} (\alpha^2)
\]
and this vanishes both when $\deg(\alpha)$ is odd, since $\alpha^2=0$ in this case, and when $\deg(\alpha)$ is even, since $\deg({d_\Qq^-}(\alpha^2))$ is odd in this case.
For an $k>4$ one has that both $k-1$ and at least one of the two indices $(i,j)$ subject to the constrain $i+j=k$ have to be greater or equal to 3, so the inductive assumption applies. As an immediate consequence, one has
\begin{equation}\label{eq:rrrrrrrr}
{m}_k(\alpha,\alpha,\dots,\alpha)=0
\end{equation}
for every $k\geq 3$.
\end{remark}

\begin{lemma}\label{lemma:rrr}
Let $r\geq 2$ and let $\Aa^\ast$ be a $(r-1)$-connected Poincar\'e DGCA of degree $n$ admitting a Hodge homotopy. If $n\leq\ell (r-1)+3$, then the multiplication $m_{\ell-1}(\alpha_1,\alpha_2,\dots,\alpha_{\ell-1})$ on $\mathcal{H}^\ast$ is zero unless $\deg(\alpha_1,\alpha_2,\dots,\alpha_{\ell-1})=(r,r,\dots,r)$. Moreover, for any $\alpha\in \mathcal{H}^r$ we have $m_{\ell-1}(\alpha,\alpha,\dots,\alpha)=0$. \end{lemma}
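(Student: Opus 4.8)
The plan is to run a degree-counting argument for the first assertion, entirely analogous to the one used in the proof of Theorem \ref{thm:A2-algebra}, and then to reduce the second assertion to the vanishing \eqref{eq:rrrrrrrr} already established in the preceding remark.

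First I would treat the statement about the degrees. Take homogeneous $\alpha_1,\dots,\alpha_{\ell-1}\in\mathcal{H}^\ast$. By unitality \eqref{eq:unital}, $m_{\ell-1}(\alpha_1,\dots,\alpha_{\ell-1})$ vanishes if any $\alpha_i$ has degree $0$, so we may assume $\deg(\alpha_i)\geq r$ for all $i$. The element $m_{\ell-1}(\alpha_1,\dots,\alpha_{\ell-1})$ lives in degree $h=\sum_i\deg(\alpha_i)+(2-(\ell-1))=\sum_i\deg(\alpha_i)-\ell+3$. If at least one $\alpha_i$ has $\deg(\alpha_i)\geq r+1$, then $h\geq (\ell-2)r+(r+1)-\ell+3=(\ell-1)(r-1)+4=\ell(r-1)+3-(r-1)+1\geq n-r+2$ since $n\leq\ell(r-1)+3$. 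But $\mathcal{H}^k=0$ for $n-r+1\leq k\leq n-1$ by Proposition \ref{prop:qsmall} (these degrees of $\mathcal{Q}^\ast$, and hence of $\mathcal{H}^\ast$, are zero), while $h\geq n-r+2$ forces $h\le n$ only if $h=n$; and $h=n$ would require $\sum_i\deg(\alpha_i)=n+\ell-3$, but under the constraint that one entry is $\geq r+1$ and the rest $\geq r$ we get $\sum_i\deg(\alpha_i)\geq (\ell-1)r+1$, so $h\geq (\ell-1)(r-1)+4>n$, a contradiction for $n\le \ell(r-1)+3$. Wait --- I should be more careful here: the clean way is to observe that if not all degrees equal $r$ then $\sum_i\deg(\alpha_i)\geq (\ell-1)r+1$, hence $h\geq (\ell-1)(r-1)+4$. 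Since $n\leq \ell(r-1)+3=(\ell-1)(r-1)+(r-1)+3$, and $(r-1)\geq 1$, we need to rule out $h\le n$; comparing, $h\geq (\ell-1)(r-1)+4$ while $n\le (\ell-1)(r-1)+r+2$, so $h\le n$ is possible only when $r\ge 2$ allows the overlap. So a pure degree bound is \emph{not} by itself enough, and this is where the argument from Theorem \ref{thm:A2-algebra} must be reused: for $h$ in the range $n-r+1\le h\le n-1$ the target space $\mathcal{H}^h$ is zero, and for $h=n$ one repeats verbatim the tree-summation argument --- every tree $T$ contributing to $m_{\ell-1}$ with $\ell-1\geq 3$ has a root neighbourhood with two incoming edges decorated by either $j$-then-$d_\Qq^-$ or $d_\Qq^-$-then-$d_\Qq^-$, and the orthogonality relations \eqref{eq:orthogonality} give $\int m_T(\alpha_1,\dots,\alpha_{\ell-1})=0$, whence $m_T=0$ since $\int\colon\mathcal{Q}^n\to\mathbb{K}$ is an isomorphism. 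Thus $m_{\ell-1}(\alpha_1,\dots,\alpha_{\ell-1})=0$ whenever $h\geq n-r+1$, which covers every case except $\deg(\alpha_i)=r$ for all $i$ (for which $h=(\ell-1)(r-1)+2\le n-r\,$, roughly, so the target need not vanish). This proves the first claim.

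For the second claim, with all $\alpha_i=\alpha\in\mathcal{H}^r$, this is exactly the statement $m_{\ell-1}(\alpha,\alpha,\dots,\alpha)=0$, which follows immediately from \eqref{eq:rrrrrrrr} established in the remark preceding the lemma (indeed $\ell-1\geq 3$ since $\ell\geq 4$). So nothing new is needed here beyond citing \eqref{eq:rrrrrrrr}.

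The main obstacle is the first claim: one must resist the temptation to prove it by naive degree counting alone, since the degree bound only squeezes $h$ into the window $[\,(\ell-1)(r-1)+2,\ n\,]$ rather than pushing it above $n$. The real content is the observation --- borrowed from Theorem \ref{thm:A2-algebra} --- that in this window $\mathcal{H}^h$ either vanishes outright (for $n-r+1\le h\le n-1$) or, for $h=n$, the Hodge-orthogonality of the homotopy operator $d_\Qq^-$ kills every tree contribution through the pairing. Once that is in place the rest is bookkeeping.
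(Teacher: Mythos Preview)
Your proof is correct and follows essentially the same approach as the paper: unitality to force $\deg(\alpha_i)\geq r$, degree counting to push the output into the range $h>n-r$ when not all degrees equal $r$, vanishing of $\mathcal{H}^h$ for $n-r+1\leq h\leq n-1$, the orthogonality/tree argument from Theorem~\ref{thm:A2-algebra} for $h=n$, and equation~\eqref{eq:rrrrrrrr} for the second claim. The one cosmetic difference is that the paper first invokes Theorem~\ref{thm:A2-algebra} to reduce to the single case $n=\ell(r-1)+3$, which makes the arithmetic cleaner (one gets $k>(\ell-1)r-\ell+3=n-r$ in one line) and would have spared you the back-and-forth with inequalities.
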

\begin{proof}
We know from Theorem \ref{thm:A2-algebra} that $m_{\ell -1}$ is zero if $n\leq\ell (r-1)+2$, so we may assume $n=\ell (r-1)+3$.
By unitality,
$m_{\ell-1}(\alpha_1,\alpha_2,\dots,\alpha_{\ell-1})=0$ if one among the $\deg(\alpha_i)$'s is zero, so we may assume $\deg(\alpha_i)\geq r$ for $i=1,\dots, \ell-1$. If $\deg(\alpha_{i_0})>r$ for some $i_0$, then $m_{\ell-1}(\alpha_1,\alpha_2,\dots,\alpha_{\ell-1})$ is an element in $\mathcal{H}^k$ with $k=\sum_i\deg(\alpha_i)-\ell+3>(\ell-1)r-\ell+3=n-r$, and so the only possibility for $m_{\ell-1}(\alpha_1,\alpha_2,\dots,\alpha_{\ell-1})$ to be nonzero is that $k=n$. But in this case $m_{\ell-1}(\alpha_1,\alpha_2,\dots,\alpha_{\ell-1})$ will vanish by the orthogonality relations. 
So we conclude that the only possibility for a nonzero $m_{\ell-1}(\alpha_1,\alpha_2,\dots,\alpha_{\ell-1})$ is  $\deg(\alpha_i)= r$ for every $i=1,\dots, \ell-1$. Finally, $m_{\ell-1}(\alpha,\alpha,\dots,\alpha)=0$ is equation \eqref{eq:rrrrrrrr}.
\end{proof}

\begin{lemma}\label{lemma:rrr2}
Let $r\geq 2$ and let $\Aa^\ast$ be a $(r-1)$-connected Poincar\'e DGCA of degree $n$ admitting a Hodge homotopy. If $n\leq\ell (r-1)+4$, then the multiplication $m_{\ell}(\alpha_1,\alpha_2,\dots,\alpha_{\ell})$ on $\mathcal{H}^\ast$ is zero unless $\deg(\alpha_1,\alpha_2,\dots,\alpha_{\ell})=(r,r,\dots,r)$. Moreover, for any $\alpha\in \mathcal{H}^r$ we have $m_{\ell}(\alpha,\alpha,\dots,\alpha)=0$. \end{lemma}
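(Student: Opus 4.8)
The argument should be a close variant of the proof of Lemma~\ref{lemma:rrr}, hence ultimately of the degree-plus-orthogonality mechanism used in the proof of Theorem~\ref{thm:A2-algebra}. The plan is as follows. By unitality \eqref{eq:unital}, $m_\ell(\alpha_1,\dots,\alpha_\ell)$ vanishes whenever some $\deg(\alpha_i)=0$, so we may assume $\deg(\alpha_i)\geq r$ for all $i$; to prove the first assertion it then suffices to show that $m_\ell(\alpha_1,\dots,\alpha_\ell)=0$ whenever $\deg(\alpha_{i_0})\geq r+1$ for some index $i_0$.

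The key step is a degree count. Under that assumption, $m_\ell(\alpha_1,\dots,\alpha_\ell)$ is homogeneous of degree
\[
k=\sum_{i=1}^{\ell}\deg(\alpha_i)+2-\ell\ \geq\ (\ell-1)r+(r+1)+2-\ell\ =\ \ell(r-1)+3\ \geq\ n-1,
\]
the last inequality being the hypothesis $n\leq\ell(r-1)+4$. Now $m_\ell$ takes values in $\mathcal{H}^\ast$, and $\mathcal{H}^j=0$ for $n-r+1\leq j\leq n-1$ (this is part of Proposition~\ref{prop:qsmall}; equivalently, it follows from Poincar\'e duality together with $(r-1)$-connectedness) and $\mathcal{H}^j=0$ for $j>n$; hence the only possibility for $m_\ell(\alpha_1,\dots,\alpha_\ell)$ to be nonzero is $k=n$. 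To dispose of the case $k=n$ I would reproduce the orthogonality argument from the proof of Theorem~\ref{thm:A2-algebra}: since $\ell\geq 3$, a trivalent tree $T$ with $\ell$ leaves contributing to $m_\ell$ has a root whose two incoming edges are not both leaves, so at least one internal edge into the root is decorated by ${d_\Qq^-}$; since $\pi_{\mathcal{H}^\ast}$ is the identity in degree $n$, the quantity $\int m_T(\alpha_1,\dots,\alpha_\ell)$ is then a pairing with an element of $\mathrm{Im}({d_\Qq^-})$, hence vanishes by \eqref{eq:orthogonality}; summing over $T$ and using that $\int\colon\Qq^n\to\mathbb{K}$ is an isomorphism gives $m_\ell(\alpha_1,\dots,\alpha_\ell)=0$. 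This settles the first assertion, and the ``moreover'' clause is exactly the case $k=\ell$ of \eqref{eq:rrrrrrrr}.

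I do not expect a genuine obstacle: the proof is essentially a rerun of Lemma~\ref{lemma:rrr}, and the one point needing care is the degree bookkeeping above, namely that raising a single entry from degree $r$ to degree $r+1$ already pushes the output degree up to $\ell(r-1)+3$, which under $n\leq\ell(r-1)+4$ lies in (or above) the Poincar\'e-duality vanishing window $[\,n-r+1,\,n-1\,]$. It is also worth noting that this degree count does \emph{not} settle the case in which all entries have degree exactly $r$: when $n=\ell(r-1)+4$ and $r=2$ the output then lands in degree $n-2=n-r$, just below the vanishing window, so the diagonal vanishing $m_\ell(\alpha,\dots,\alpha)=0$ has to be taken from \eqref{eq:rrrrrrrr} (ultimately from the inductive identity \eqref{eq:hat-rrrrrrrr}) rather than deduced from degrees --- which is why the statement keeps the ``unless $(r,\dots,r)$'' clause and the ``on the diagonal'' clause separate.
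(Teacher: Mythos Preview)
Your proposal is correct and follows essentially the same route as the paper: unitality to reduce to $\deg(\alpha_i)\ge r$, a degree count showing the output lands in the window $[\,n-r+1,\,n\,]$ when some entry has degree $>r$, the orthogonality argument to kill the degree-$n$ case, and \eqref{eq:rrrrrrrr} for the diagonal. The only cosmetic difference is that the paper first invokes Theorem~\ref{thm:A2-algebra} to reduce to $n=\ell(r-1)+4$ before doing the degree count, whereas you work directly with the inequality $n\le \ell(r-1)+4$; the arithmetic is the same either way.
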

\begin{proof}
We know from Theorem \ref{thm:A2-algebra} that $m_\ell$ is zero if $n\leq (\ell+1)(r-1)+2$, and since $r\geq 2$ we have $\ell (r-1)+3\leq (\ell+1)(r-1)+2$. So we may assume $n=\ell (r-1)+4$.
By unitality, $m_{\ell}(\alpha_1,\alpha_2,\dots,\alpha_{\ell})=0$ if any of the $\deg(\alpha_i)$'s is zero, so we may assume $\deg(\alpha_i)\geq r$ for any $i=1,\dots,\ell$. If $\deg(\alpha_{i_0})>r$ for some $i_0$, then $m_{\ell}(\alpha_1,\alpha_2,\dots,\alpha_{\ell})\in \mathcal{H}^k$ with $k>\ell r+2-\ell\geq n-r$ and so the only possibility for $m_{\ell}(\alpha_1,\alpha_2,\dots,\alpha_{\ell})$ to be nonzero is that $k=n$. But in this case $m_{\ell}(\alpha_1,\alpha_2,\dots,\alpha_{\ell})$ will vanish by the orthogonality relations.   Finally, $m_{\ell}(\alpha,\alpha,\dots,\alpha)=0$ is equation \eqref{eq:rrrrrrrr}.

\end{proof}

\begin{lemma}\label{lemma:rrr-b}
Let $r\geq 2$ and let $\Aa^\ast$ be a $(r-1)$-connected Poincar\'e DGCA of degree $\ell (r-1)+4$ admitting a Hodge homotopy. The multiplication $m_{\ell-1}(\alpha_1,\dots,\alpha_{\ell-1})$ is zero unless  $\deg(\alpha_i)=r$ for all $i=1,\dots, \ell-1$ or $\deg(\alpha_i)=r$ for all $i=1,\dots,\ell-1$ except a single index $i_0$ for which one has $\deg(\alpha_{i_0})=r+1$.  Moreover, for any $\alpha\in \mathcal{H}^r$ we have $m_{\ell-1}(\alpha,\alpha,\dots,\alpha)=0$. 
\end{lemma}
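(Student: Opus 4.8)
The statement to prove is Lemma \ref{lemma:rrr-b}: for a $(r-1)$-connected Poincar\'e DGCA of degree exactly $n=\ell(r-1)+4$ admitting a Hodge homotopy, $m_{\ell-1}(\alpha_1,\dots,\alpha_{\ell-1})$ vanishes unless all the $\deg(\alpha_i)$ equal $r$, or all but one equal $r$ and the remaining one equals $r+1$; moreover $m_{\ell-1}(\alpha,\dots,\alpha)=0$ for $\alpha\in\mathcal{H}^r$. The proof should follow exactly the template of Lemmas \ref{lemma:rrr} and \ref{lemma:rrr2}: combine the already-known vanishing coming from Theorem \ref{thm:A2-algebra} (or, here, from Lemma \ref{lemma:rrr} applied in lower degree), a degree-counting argument to pin down the allowed multidegrees, and the orthogonality relations \eqref{eq:orthogonality} to kill the top-degree contributions.

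First I would reduce to the degree-counting step. By unitality \eqref{eq:unital} we may assume $\deg(\alpha_i)\geq r$ for every $i=1,\dots,\ell-1$. Write $h=\sum_{i=1}^{\ell-1}\deg(\alpha_i)$, so $m_{\ell-1}(\alpha_1,\dots,\alpha_{\ell-1})$ lands in $\mathcal{H}^k$ with $k=h-(\ell-1)+2=h-\ell+3$. If all $\deg(\alpha_i)=r$ then $k=(\ell-1)r-\ell+3=\ell(r-1)+4-r-1=n-r-1$, which is an allowed (a priori nonzero) target. If exactly one index has $\deg(\alpha_{i_0})=r+1$ and the rest equal $r$, then $k=n-r$, still a priori allowed. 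The point is to show every other multidegree is killed: if two or more degrees exceed $r$, or a single one exceeds $r+1$, then $h\geq (\ell-1)r+2$, hence $k\geq n-r+1$, so the only possibly nonzero target is $k=n$ (since $\mathcal{Q}^j=\mathcal{H}^j=0$ for $n-r+1\leq j\leq n-1$ by Proposition \ref{prop:qsmall}, noting $\mathcal{H}^\ast\cong H^\ast(\Aa)$). So it remains to handle the case $k=n$.

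For $k=n$ I would invoke the orthogonality relations exactly as in the proof of Theorem \ref{thm:A2-algebra}. Since $\ell-1\geq 3$ (as $\ell\geq 4$), any rooted trivalent tree $T$ with $\ell-1$ leaves contributing to $m_{\ell-1}$ has, at the internal vertex adjacent to the root, at least one incoming internal edge decorated by $d_\Qq^-$, so the input to the root projector $\pi_{\mathcal{H}^\ast}$ is a product one of whose factors lies in $\mathrm{Im}(d_\Qq^-)$. In degree $n$ everything in $\mathcal{Q}^\ast$ is harmonic so $\pi_{\mathcal{H}^\ast}=\mathrm{id}$ there, and then $\int m_T(\alpha_1,\dots,\alpha_{\ell-1})=0$ by \eqref{eq:orthogonality}; since $\int\colon\mathcal{Q}^n\to\mathbb{K}$ is an isomorphism, $m_T(\alpha_1,\dots,\alpha_{\ell-1})=0$, and summing over $T$ gives $m_{\ell-1}(\alpha_1,\dots,\alpha_{\ell-1})=0$ in this case. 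Finally, $m_{\ell-1}(\alpha,\dots,\alpha)=0$ for $\alpha\in\mathcal{H}^r$ is just the already-proved equation \eqref{eq:rrrrrrrr}. I do not anticipate a genuine obstacle here — the only thing to be careful about is the bookkeeping in the degree count (the "$+4$" versus the "$+3$" in the companion lemmas) and confirming that $k=n-r$ and $k=n-r-1$ are the precise thresholds; both follow from $n=\ell(r-1)+4$ by direct substitution.
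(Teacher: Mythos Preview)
Your proposal is correct and follows essentially the same approach as the paper: unitality to reduce to $\deg(\alpha_i)\geq r$, the degree count showing that outside the two listed multidegree patterns the output lands in degree $k>n-r$, the orthogonality argument from the proof of Theorem \ref{thm:A2-algebra} to kill the case $k=n$, and equation \eqref{eq:rrrrrrrr} for the ``moreover'' clause. Your write-up is in fact slightly more explicit than the paper's (you compute the target degrees $n-r-1$ and $n-r$ for the allowed cases and spell out the tree argument), but the logic is identical.
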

\begin{proof}
By unitality, $m_{\ell-1}(\alpha_{1},\dots,\alpha_{\ell-1})=0$ if one among the $\deg(\alpha_i)$'s is zero, so we may assume $\deg(\alpha_i)\geq r$ for every $i=1,\dots,\ell-1$. If there is one index $i_0$ with $\deg(\alpha_{i_0})\geq r+2$ or there are two indices $i_0,i_1$ with $\deg(\alpha_{i_j})\geq r+1$, then $m_{\ell-1}(\alpha_1,\dots,\alpha_{\ell-1})$ is an element in $\mathcal{H}^k$ with $k=\sum_i\deg(\alpha_i)-\ell+1\geq (\ell-1)r+4-(\ell-1)>\ell(r-1)+4-r=n-r$, and so the only possibility for $m_{\ell-1}(\alpha_{1},\dots,\alpha_{\ell-1})=0$ to be nonzero is that $k=n$. In this case, $m_{\ell-1}(\alpha_{1},\dots,\alpha_{\ell-1})=0$ will vanish by the orthogonality relations.  Finally, $m_{\ell-1}(\alpha,\alpha,\dots,\alpha)=0$ is equation \eqref{eq:rrrrrrrr}.
 This completes  the  proof of Lemma \ref{lemma:rrr-b}.
\end{proof}

\begin{theorem}[Zhou's conjecture]\label{thm:zhou-s-conjecture}
Let $r\geq 2$ and let $\Aa^\ast$ be a $(r-1)$-connected Poincar\'e DGCA of degree $n$ admitting a Hodge homotopy.
  If $n\leq \ell (r-1)+4$, with $\ell\geq 4$ and $b_r=1$, then $\Hh^\ast$ carries a minimal unital $C_\infty$-algebra structure making it $C_\infty$-quasi-isomorphic to $\Aa^\ast$, whose multiplication $m_2$ is 
  $m_2(\alpha,\beta)=\pi_{\mathcal{H}^\ast}(\alpha\cdot\beta)$, and whose multiplications $m_k$ vanish for $k\geq \ell-1$. In particular, all the multiplications $m_k$ with $k\geq 5$ vanish if $n\leq 6r-2$ all the multiplications $m_k$ with $k\geq 4$ vanish if $n\leq 5r-1$. Also, all the multiplications $m_k$ with $k\geq 3$ vanish if $n\leq 4r$, and so  $\Aa^\ast$ is formal in this case.
\end{theorem}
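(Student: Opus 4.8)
The plan is to follow the pattern of the proof of Theorem~\ref{thm:A2-algebra} and isolate the one genuinely new point. Exactly as there, the Hodge homotopy transfer equips $\Hh^\ast$ with a unital minimal $C_\infty$-structure, $C_\infty$-quasi-isomorphic to $\Qq^\ast$ and hence to $\Aa^\ast$, whose binary operation is $m_2(\alpha,\beta)=\pi_{\Hh^\ast}(\alpha\cdot\beta)$; this part uses no dimension bound. So everything reduces to showing $m_k=0$ for $k\geq\ell-1$. By Theorem~\ref{thm:A2-algebra} this is already known when $n\leq\ell(r-1)+2$, so we may assume $n\in\{\ell(r-1)+3,\ \ell(r-1)+4\}$. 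Moreover, for $k\geq\ell+1$ we have $n\leq\ell(r-1)+4\leq(\ell+2)(r-1)+2\leq(k+1)(r-1)+2$, using $r\geq2$, so Theorem~\ref{thm:A2-algebra} applied with $\ell$ replaced by $k+1$ already gives $m_k=0$. Hence only $m_{\ell-1}$ and $m_\ell$ remain to be killed, and only in these two top dimensions.

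First I would dispose of the ``cheap'' cases. For $m_\ell$, Lemma~\ref{lemma:rrr2} shows that $m_\ell(\alpha_1,\dots,\alpha_\ell)=0$ unless every $\deg(\alpha_i)=r$; since $b_r=1$, the space $\Hh^r$ is one-dimensional, say $\Hh^r=\mathbb{K}\alpha$, so by multilinearity any such $m_\ell(\alpha_1,\dots,\alpha_\ell)$ is a scalar multiple of $m_\ell(\alpha,\dots,\alpha)$, which vanishes by \eqref{eq:rrrrrrrr}. The same reasoning with Lemma~\ref{lemma:rrr} in place of Lemma~\ref{lemma:rrr2} gives $m_{\ell-1}\equiv0$ whenever $n\leq\ell(r-1)+3$. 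What is left is $m_{\ell-1}\equiv 0$ at $n=\ell(r-1)+4$, and by Lemma~\ref{lemma:rrr-b} only two input profiles survive: all arguments of degree $r$ (handled as above, via \eqref{eq:rrrrrrrr} and $\Hh^r=\mathbb{K}\alpha$); and all arguments of degree $r$ except a single one, call it $\beta$, of degree $r+1$. By multilinearity and $\Hh^r=\mathbb{K}\alpha$ it then suffices to prove $m_{\ell-1}(\alpha,\dots,\alpha,\beta,\alpha,\dots,\alpha)=0$ for every $\beta\in\Hh^{r+1}$ and every position of $\beta$ among the $\ell-1$ arguments.

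This is the heart of the matter. A degree count gives $m_{\ell-1}(\alpha,\dots,\beta,\dots,\alpha)\in\Hh^{n-r}$; by Poincar\'e duality $\dim\Hh^{n-r}=\dim\Hh^{r}=b_r=1$, and since the pairing on $H^\ast(\Aa)$ is non-degenerate and $\Hh^\ast\cong H^\ast(\Aa)$ compatibly with it, the Poincar\'e pairing restricts to a perfect pairing $\Hh^{n-r}\otimes\Hh^{r}\to\mathbb{K}$. So it is enough to check $\langle m_{\ell-1}(\alpha,\dots,\beta,\dots,\alpha),\alpha\rangle=0$. Here I would invoke the fact --- which is precisely what the notion of a Poincar\'e DGCA admitting a Hodge homotopy is designed to produce, the orthogonality relations \eqref{eq:orthogonality} being exactly the needed hypothesis (see \cite{FKLS2021,KS2001}) --- that the Hodge homotopy transfer yields a \emph{cyclic} $C_\infty$-structure, i.e. the $\ell$-linear form $(\alpha_0,\dots,\alpha_{\ell-1})\mapsto\langle m_{\ell-1}(\alpha_0,\dots,\alpha_{\ell-2}),\alpha_{\ell-1}\rangle$ is graded cyclically symmetric. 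Rotating $\beta$ into the last (pairing) slot then gives
\[
\langle m_{\ell-1}(\alpha,\dots,\beta,\dots,\alpha),\alpha\rangle=\pm\,\langle m_{\ell-1}(\alpha,\dots,\alpha),\beta\rangle,
\]
with $\ell-1$ copies of $\alpha$ inside $m_{\ell-1}$ on the right; as $\ell-1\geq3$, equation \eqref{eq:rrrrrrrr} makes this $0$. Non-degeneracy of the pairing then forces $m_{\ell-1}(\alpha,\dots,\beta,\dots,\alpha)=0$, completing the proof that $m_k=0$ for all $k\geq\ell-1$.

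Finally, the ``in particular'' assertions are just the specializations $\ell=6$ (giving $n\leq 6r-2$, $\ell-1=5$), $\ell=5$ ($n\leq 5r-1$, $\ell-1=4$) and $\ell=4$ ($n\leq 4r$, $\ell-1=3$); in the last case all $m_k$ with $k\geq3$ vanish, so $\Hh^\ast$ with $m_2$ is just the cohomology ring $H^\ast(\Aa)$ with its cup product and $\Aa^\ast$ is formal. I expect the only genuinely delicate step to be the cyclic-invariance argument for $m_{\ell-1}$ in the mixed-degree case in top dimension: everything else is the degree bookkeeping already packaged into Lemmas~\ref{lemma:rrr}--\ref{lemma:rrr-b} together with the elementary vanishing \eqref{eq:rrrrrrrr}, but that mixed case really does need the transferred structure to be compatible with the Poincar\'e pairing --- automatic in this Hodge-theoretic framework, but false for an arbitrary homotopy transfer.
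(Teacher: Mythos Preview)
Your proof is correct and follows a genuinely different route from the paper in the one hard case. The reduction steps---Theorem~\ref{thm:A2-algebra} for $n\le\ell(r-1)+2$ and for $m_k$ with $k\ge\ell+1$, Lemma~\ref{lemma:rrr2} for $m_\ell$, Lemma~\ref{lemma:rrr} for $m_{\ell-1}$ when $n=\ell(r-1)+3$, and Lemma~\ref{lemma:rrr-b} to isolate the mixed-degree input at $n=\ell(r-1)+4$---are exactly those of the paper. The divergence is in how the remaining vanishing $\langle m_{\ell-1}(\alpha,\dots,\beta,\dots,\alpha),\alpha\rangle=0$ is established.

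The paper proceeds by brute force: it writes $m_{\ell-1}=\pi_{\Hh}\widehat m_{\ell-1}$, observes that $\Qq^{n-r}=\Hh^{n-r}$ so the projection is the identity, expands $\int x\cdot \widehat m_{\ell-1}(\{x,v,x\}_{a,\ell-2-a})$ via the Merkulov recursion \eqref{eq:hat-mk}, and then checks the resulting identity case by case (generic $2\le a\le\ell-4$; boundary values $a=0,1,\ell-3,\ell-2$; small values $\ell=4,5$; parity of $r$), each time using the orthogonality relations \eqref{eq:orthogonality} and the odd degree of $d_\Qq^-(x^2)$ to cancel terms in pairs. This runs to a couple of pages.

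You instead invoke that the transferred $C_\infty$-structure is \emph{cyclic} with respect to the Poincar\'e pairing, and rotate $\beta$ out of $m_{\ell-1}$ into the pairing slot, reducing immediately to $\langle m_{\ell-1}(\alpha,\dots,\alpha),\beta\rangle=0$ via \eqref{eq:rrrrrrrr}. This is legitimate: the orthogonality relations \eqref{eq:orthogonality} give exactly that $\pi_{\Hh^\ast}$ and $j$ are adjoint and $d_\Qq^-$ is (skew-)self-adjoint for the non-degenerate pairing on $\Qq^\ast$, and cyclic homotopy transfer under these hypotheses is standard. Your argument is therefore much shorter and more conceptual. The trade-off is that the paper's computation is entirely self-contained within the objects already defined, whereas yours imports a result---cyclicity of the transferred structure---that the paper neither states nor proves, and whose precise form in \cite{FKLS2021,KS2001} you should pin down more carefully (the usual references for cyclic transfer are rather the later works of Tradler, Hamilton--Lazarev, or Cho).
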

\begin{proof}
If $n\leq \ell (r-1)+2$ this is Theorem \ref{thm:A2-algebra}. So there are only two cases to be considered: $n=\ell (r-1)+3$ and $\ell (r-1)+4$. 

\underline{Case 1}. If  $n=\ell (r-1)+3$, we proceed as follows.  Let $x$ be a linear generator of $\mathcal{H}^r$.
By Lemma \ref{lemma:rrr}, the only possible nonzero $m_{\ell-1}(\alpha_1,\dots,\alpha_{\ell-1})$'s are those with $\deg(\alpha_i)=r$ for $i=1,\dots,\ell-1$. So we are reduced to computing $m_{\ell-1}(x,x,\dots,x)$, and this is zero by Lemma \ref{lemma:rrr} again.  Finally, all the higher multiplications $m_k$ with $k\geq \ell$ vanish due to Theorem \ref{thm:A2-algebra}, since $\ell(r-1)+3\leq (\ell+1)(r-1)+2$ for $r\geq 2$.

\underline{Case 2}. If  $n=\ell (r-1)+4$, we proceed as follows. 
Let $x$ be a linear generator of $\mathcal{H}^r$.
By Lemma \ref{lemma:rrr-b}, the  multiplication $m_{\ell-1}(\alpha_1,\dots,\alpha_{\ell-1})$ is zero unless\\ 
(i)  $\deg(\alpha_i)=r$ for all $i=1,\dots, \ell-1$,\\
(ii) or  $\deg(\alpha_i)=r$ for all $i=1,\dots,\ell-1$ except  a single index $i_0$ for which one has $\deg(\alpha_{i_0})=r+1$.

\underline{Case 2,  step 1}. We start by showing that $m_{\ell-1}$ vanishes.

 In the first case (i) we are reduced to computing $m_{\ell-1}(x,x,\dots,x)$, and this vanishes by  equation \eqref{eq:rrrrrrrr}. 
 
 In the second case   (ii)  we are reduced to computing $m_{\ell-1}(x,\dots,x,v,x,\dots,x)$ with $v\in \mathcal{H}^{r+1}$, and we want to show that $m_{\ell-1}(x,\dots,x,v,x,\dots,x)=0$. Since 
$\deg(\widehat{m}_{\ell-1}(x,\dots,x,v,x,\dots,x))=\ell(r-1)+4-r=n-r$, from equation \eqref{eq:mk} we have $m_{\ell-1}(x,\dots,x,v,x,\dots,x)=\widehat{m}_{\ell-1}(x,\dots,x,v,x,\dots,x)$, so we want to show that $\widehat{m}_{\ell-1}(x,\dots,x,v,x,\dots,x)=0$.
By nondegeneracy of the pairing, this is equivalent to
\begin{equation*}
\int x \cdot \widehat{m}_{\ell-1}(x,\dots,x,v,x,\dots,x)=0.
\end{equation*}

Let us introduce the notation
\[
\{x\}_a=(\underbrace{x,x,\dots,x}_{a}); \qquad \{x,v,x\}_{a,b}=(\underbrace{x,\dots,x}_{a},v,\underbrace{x,\dots,x}_{b}),
\]
where $a$ and $b$ are possibly zero. With this notation, what we want to prove is
\[
\int x \cdot \widehat{m}_{\ell-1}(\{x,v,x\}_{a,\ell-2-a})=0
\]
for any $a=0,\dots,\ell-2$.

$\bullet$  We  provide a detailed proof in 
the generic case $2\leq a\leq \ell-4$.  Notice that we have $\ell\geq 6$ in this case. The remaining special cases with $a=0,1,\ell-3,\ell-2$ and/or with $\ell=4,5$ are handled similarly, so  we will be more sketchy.

 By \eqref{eq:hat-mk} and by the vanishing \eqref{eq:hat-rrrrrrrr}, we have
\begin{align*}
\widehat{m}_{\ell-1} (\{x,v,x\}_{a,\ell-2-a})& =(-1)^{\ell-2}{d_\Qq^-}\widehat{m}_{\ell-2}(\{x,v,x\}_{a,\ell-3-a})\cdot x\\
\notag & - (-1)^{(\ell-1)r} x \cdot {d_\Qq^-}\widehat{m}_{\ell-2}(\{x,v,x\}_{a-1,\ell-2-a})\\
&-\sum_{
i=2}^{a}
(-1)^{\nu}{d_\Qq^-}\widehat{m}_i(\{x\}_i) \cdot {d_\Qq^-}\widehat{m}_{\ell-1-i}(\{x,v,x\}_{a-i,\ell-2-a})\\
&-\sum_{
i=a+1}^{\ell-3}
(-1)^{\nu}{d_\Qq^-}\widehat{m}_i(\{x,v,x\}_{a,i-a-1}) \cdot {d_\Qq^-}\widehat{m}_{\ell-1-i}(\{x\}_{\ell-1-i})\\
& =(-1)^{\ell}{d_\Qq^-}\widehat{m}_{\ell-2}(\{x,v,x\}_{a,\ell-3-a})\cdot x\\
\notag & - (-1)^{(\ell-1)r} x \cdot {d_\Qq^-}\widehat{m}_{\ell-2}(\{x,v,x\}_{a-1,\ell-2-a})\\
&-
{d_\Qq^-}(x^2) \cdot {d_\Qq^-}\widehat{m}_{\ell-3}(\{x,v,x\}_{a-2,\ell-2-a})\\
&-
(-1)^{(\ell-1)r+\ell}{d_\Qq^-}\widehat{m}_{\ell-3}(\{x,v,x\}_{a,\ell-4-a}) \cdot {d_\Qq^-}(x^2).
\end{align*}
so we want to prove that
\begin{align*}
&(-1)^{\ell}\int x\cdot {d_\Qq^-}\widehat{m}_{\ell-2}(\{x,v,x\}_{a,\ell-3-a})\cdot x\\
 & - (-1)^{(\ell-1)r}\int  x^2 \cdot {d_\Qq^-}\widehat{m}_{\ell-2}(\{x,v,x\}_{a-1,\ell-2-a})\\
&-\int x\cdot
{d_\Qq^-}(x^2) \cdot {d_\Qq^-}\widehat{m}_{\ell-3}(\{x,v,x\}_{a-2,\ell-2-a})\\
&-
(-1)^{(\ell-1)r+\ell} \int x\cdot {d_\Qq^-}\widehat{m}_{\ell-3}(\{x,v,x\}_{a,\ell-4-a}) \cdot {d_\Qq^-}(x^2)=0.
\end{align*}
This is trivially satisfied if $r$ is odd, since in this case $x^2 = 0$. 

For an even $r$, taking into  account  that in this case  
$\deg ({d_\Qq^-}\widehat{m}_{\ell-3}(\{x,v,x\}_{a,\ell-4-a})) \equiv \ell+1 \mod 2$,  it reduces to
\begin{align}
\notag &(-1)^{\ell}\int x^2\cdot {d_\Qq^-}\widehat{m}_{\ell-2}(\{x,v,x\}_{a,\ell-3-a})\\
\notag & - \int  x^2 \cdot {d_\Qq^-}\widehat{m}_{\ell-2}(\{x,v,x\}_{a-1,\ell-2-a})\\
\notag&-\int x\cdot
{d_\Qq^-}(x^2) \cdot {d_\Qq^-}\widehat{m}_{\ell-3}(\{x,v,x\}_{a-2,\ell-2-a})\\
\label{eq:will-simplify}&+ \int x\cdot {d_\Qq^-}(x^2)\cdot {d_\Qq^-}\widehat{m}_{\ell-3}(\{x,v,x\}_{a,\ell-4-a}) =0.
\end{align}
We have
\[
x^2=\pi_\mathcal{H}(x^2)+{d_\Qq}{d_\Qq^-}(x^2)
\]
and
\[
\widehat{m}_{\ell-2}(\{x,v,x\}_{a,b})=\pi_{\mathcal{H}}\widehat{m}_{\ell-2}(\{x,v,x\}_{a,b})+{d_\Qq^-}{d_\Qq}\widehat{m}_{\ell-2}(\{x,v,x\}_{a,b})+
{d_\Qq}{d_\Qq^-}\widehat{m}_{\ell-2}(\{x,v,x\}_{a,b}).
\]
The orthogonality relations \eqref{eq:orthogonality}, the recursive formula \eqref{eq:hat-mk} and the vanishing \eqref{eq:hat-rrrrrrrr}, together with ${d_\Qq^-}(x^2)\cdot {d_\Qq^-}(x^2)=0$ since $\deg({d_\Qq^-}(x^2))$ is odd,  then give
\begin{align*}
(-1)^{\ell}\int x^2\cdot {d_\Qq^-}\widehat{m}_{\ell-2}(\{x,v,x\}_{a,\ell-3-a})&=(-1)^{\ell}\int {d_\Qq}{d_\Qq^-}(x^2)\, {d_\Qq^-}\widehat{m}_{\ell-2}(\{x,v,x\}_{a,\ell-3-a})\\
&=(-1)^{\ell}\int {d_\Qq^-}(x^2)\cdot {d_\Qq}{d_\Qq^-}\widehat{m}_{\ell-2}(\{x,v,x\}_{a,\ell-3-a})\\
&=(-1)^{\ell}\int {d_\Qq^-}(x^2)\cdot \widehat{m}_{\ell-2}(\{x,v,x\}_{a,\ell-3-a})\\
 & =  -\int x\cdot {d_\Qq^-}(x^2)\cdot {d_\Qq^-}\widehat{m}_{\ell-3}(\{x,v,x\}_{a,\ell-4-a})\\
 &\qquad  - (-1)^{\ell}\int x\cdot {d_\Qq^-}(x^2) \cdot {d_\Qq^-}\widehat{m}_{\ell-3}(\{x,v,x\}_{a-1,\ell-3-a})
\end{align*}
and, similarly,
\begin{align*}
-\int x^2\cdot {d_\Qq^-}\widehat{m}_{\ell-2}(\{x,v,x\}_{a-1,\ell-2-a})& =(-1)^\ell\int x\cdot {d_\Qq^-}(x^2)\cdot {d_\Qq^-}\widehat{m}_{\ell-3}(\{x,v,x\}_{a-1,\ell-3-a})\\
 &\qquad  +\int x\cdot {d_\Qq^-}(x^2) \cdot {d_\Qq^-}\widehat{m}_{\ell-3}(\{x,v,x\}_{a-2,\ell-2-a}).
\end{align*}
Substituting these into the left hand side of \eqref{eq:will-simplify} we see that \eqref{eq:will-simplify} is indeed satisfied.

$\bullet$  For $a=0$ and $\ell\geq 6$, we have to prove that
\begin{align*}
&(-1)^{\ell}\int x\cdot {d_\Qq^-}\widehat{m}_{\ell-2}(\{x,v,x\}_{0,\ell-3})\cdot x\\
&-
(-1)^{(\ell-1)r+\ell}\int x\cdot {d_\Qq^-}\widehat{m}_{\ell-3}(\{x,v,x\}_{0,\ell-4}) \cdot {d_\Qq^-}(x^2)=0.
\end{align*}
Again, this is trivially satisfied if $r$ is odd, while  for an even $r$  it is proved by the same argument used in the proof of \eqref{eq:will-simplify}.

The case $a=\ell-2$ and $\ell\geq 6$ is perfectly analogous.

$\bullet$  For $a=1$ and $\ell\geq 6$, we have to prove that

\begin{align*}
&(-1)^{\ell}\int x\cdot {d_\Qq^-}\widehat{m}_{\ell-2}(\{x,v,x\}_{1,\ell-4})\cdot x\\
\notag & - (-1)^{(\ell-1)r}\int x^2 \cdot {d_\Qq^-}\widehat{m}_{\ell-2}(\{x,v,x\}_{0,\ell-3})\\
&-
(-1)^{(\ell-1)r+\ell}\int x\cdot {d_\Qq^-}\widehat{m}_{\ell-3}(\{x,v,x\}_{1,\ell-5}) \cdot {d_\Qq^-}(x^2)=0,\\
\end{align*}
and again this is trivial for an odd $r$, while for an even $r$  it is proved by the same argument used in the proof of \eqref{eq:will-simplify}. The case $a=1$ and $\ell\geq 6$ is perfectly analogous.

$\bullet$  For $\ell=5$ we have four cases to consider; namely, $(v,x,x,x)$, $(x,v,x,x)$, $(x,x,v,x)$ and $(x,x,x,v)$. The fourth case is analogous to the first one, and the third one is analogous to the second one, so we need only investigate two cases. For the first one we have
\begin{align*}
\widehat{m}_{4} (v,x,x,x)& =-{d_\Qq^-}\widehat{m}_{3}(v,x,x)\cdot x+{d_\Qq^-}(vx) \cdot {d_\Qq^-}(x^2)
\end{align*}
so we want to prove
\[
-\int x\cdot {d_\Qq^-}\widehat{m}_{3}(v,x,x)\cdot x +\int x\cdot {d_\Qq^-}(vx) \cdot {d_\Qq^-}(x^2)=0.
\]
Again this is trivial for an odd $r$, while for an even $r$ we have
\begin{align*}
-\int x\cdot {d_\Qq^-}\widehat{m}_{3}(v,x,x)\cdot x&=-\int x^2\cdot {d_\Qq^-}\widehat{m}_{3}(v,x,x)\\
&=-\int {d_\Qq^-}(x^2)\cdot \widehat{m}_{3}(v,x,x)\\
&=-\int {d_\Qq^-}(x^2)\cdot \left({d_\Qq^-} (vx) \cdot x +  v\cdot  {d_\Qq^-}  (x^2)\right)\\
&=-\int {d_\Qq^-}(x^2)\cdot {d_\Qq^-} (vx) \cdot x  \\
&=-\int x\cdot {d_\Qq^-} (vx) \cdot {d_\Qq^-}(x^2). 
\end{align*}
For the second case we have
\begin{align*}
\widehat{m}_{4} (x,v,x,x)& =-{d_\Qq^-}\widehat{m}_{3}(x,v,x)\cdot x-x\cdot {d_\Qq^-}\widehat{m}_{3}(v,x,x) +{d_\Qq^-}(xv) \cdot {d_\Qq^-}(x^2)
\end{align*}
so we want to prove
\[
-\int x\cdot {d_\Qq^-}\widehat{m}_{3}(x,v,x)\cdot x-\int x^2\cdot {d_\Qq^-}\widehat{m}_{3}(v,x,x) +\int x\cdot {d_\Qq^-}(xv) \cdot {d_\Qq^-}(x^2)=0.
\]
Again this is trivial for an odd $r$, while for an even $r$ we have
\begin{align*}
-\int x\cdot {d_\Qq^-}\widehat{m}_{3}(x,v,x)\cdot x&=-\int x^2\cdot {d_\Qq^-}\widehat{m}_{3}(x,v,x)\\
&=-\int {d_\Qq^-}(x^2)\cdot \widehat{m}_{3}(x,v,x)\\
&=-\int {d_\Qq^-}(x^2)\cdot \left({d_\Qq^-} (xv) \cdot x  - x\cdot  {d_\Qq^-}  (vx)\right)\\
&=-0 
\end{align*}
and
\begin{align*}
-\int x^2\cdot {d_\Qq^-}\widehat{m}_{3}(v,x,x) &=-\int x\cdot {d_\Qq^-} (vx) \cdot {d_\Qq^-}(x^2)\\
&=-\int x\cdot {d_\Qq^-} (xv) \cdot {d_\Qq^-}(x^2)
\end{align*}
from the previous computation.

$\bullet$  For $\ell=4$ we have three cases to consider; namely, $(v,x,x)$, $(x,v,x)$, and $(x,x,v)$. The third case is analogous to the first one, so we need only investigate two cases. For the first one we have
\begin{align*}
\widehat{m}_3 (v,x,x) &=  {d_\Qq^-} (vx) \cdot x  - (-1)^{r+1}  v\cdot  {d_\Qq^-}  (x^2)\\
&={d_\Qq^-} (vx) \cdot x  -  {d_\Qq^-}  (x^2) \cdot v,
\end{align*}
so we want to prove
\[
\int x\cdot {d_\Qq^-} (vx) \cdot x  - \int x\cdot  {d_\Qq^-}  (x^2) \cdot v =0.
\]
Once more this is trivial for an odd $r$, while for an even $r$ we have
\begin{align*}
\int x\cdot {d_\Qq^-} (vx) \cdot x&=\int x^2\cdot {d_\Qq^-} (vx)\\
&=\int {d_\Qq^-}(x^2)\cdot {d_\Qq}{d_\Qq^-} (vx)\\
&=\int {d_\Qq^-}(x^2)\cdot vx\\
&=\int x\cdot {d_\Qq^-}(x^2)\cdot v.
\end{align*}
In the second case we have
\begin{align*}
\widehat{m}_3 (x,v,x) &=  {d_\Qq^-} (xv) \cdot x  - (-1)^{r}  x\cdot  {d_\Qq^-}  (vx).
\end{align*}
So this time the odd $r$ case is trivial by the usual reason, while also the even $r$ case is trivial since for an even $r$ we directly have $\widehat{m}_3 (x,v,x)=0$.

\

\underline{Case 2, final  step}. Now we turn to the $\ell$-ary multiplication $m_\ell$.  By Lemma \ref{lemma:rrr2}, since $\dim \mathcal{H}^r=1$ we are reduced to computing $m_\ell(x,x,\dots,x)$ and this is zero, again  by Lemma \ref{lemma:rrr2}.  Finally, all the higher multiplications $m_k$ with $k\geq \ell+1$ vanish due to Theorem \ref{thm:A2-algebra}, since $\ell(r-1)+4\leq (\ell+2)(r-1)+2$ for $r\geq 2$.

\end{proof}

The particular case $\ell=4$ considered in the last statement in Theorem \ref{thm:zhou-s-conjecture} recovers two results from \cite{Cavalcanti2006}, that we can jointly state as follows.

\begin{corollary}[Cavalcanti]\label{prop:cavalcanti1}   Let $r\geq 2$ and let $\Aa^\ast$ be a $(r-1)$-connected Poincar\'e DGCA of degree $n$ with $n\leq 4r$ admitting a Hodge homotopy. If $\dim H^r(\Aa)=1$ then $\Aa^\ast$ is formal. In particular an $(r-1)$-connected compact manifold  $M$  of  dimension  $n \leq 4r$    with $b_{r}  = 1$ is formal.
\end{corollary}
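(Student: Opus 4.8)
The plan is to obtain this as the special case $\ell=4$ of Theorem \ref{thm:zhou-s-conjecture}. First I would note that for $\ell=4$ the bound $n\leq \ell(r-1)+4$ reads $n\leq 4r$, and the hypothesis $b_r=1$ is precisely $\dim H^r(\Aa)=1$; since $\ell=4\geq 4$, Theorem \ref{thm:zhou-s-conjecture} applies verbatim and endows $\Hh^\ast$ with a minimal unital $C_\infty$-algebra structure, $C_\infty$-quasi-isomorphic to $\Aa^\ast$, all of whose multiplications $m_k$ with $k\geq \ell-1=3$ vanish.

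Next I would observe that a minimal $C_\infty$-algebra with $m_k=0$ for every $k\geq 3$ is nothing but a graded-commutative algebra with zero differential, here $(\Hh^\ast,m_2)$; and by Remark \ref{rem:enrichment}(2) the canonical linear isomorphism $\Hh^\ast\xrightarrow{\sim} H^\ast(\Aa)$ intertwines $m_2$ with the cup product on $H^\ast(\Aa)$. Hence $\Aa^\ast$ is $C_\infty$-quasi-isomorphic, and therefore DGCA weak equivalent, to its cohomology algebra $(H^\ast(\Aa),0,\cdot)$; that is, $\Aa^\ast$ is formal. This proves the first assertion.

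For the manifold statement I would invoke Example \ref{ex:d-}: an $(r-1)$-connected compact manifold $M$ with $r\geq 2$ is simply connected, hence oriented, so fixing a Riemannian metric the de Rham algebra $\Aa^\ast(M)$, together with integration over $M$ and the Hodge homotopy built from the Green operator, is a Poincar\'e DGCA of degree $n=\dim M$ admitting a Hodge homotopy. Its connectivity in the sense of Definition \ref{def:connected} is the $(r-1)$-connectivity of $M$, and $\dim H^r(\Aa^\ast(M))=b_r(M)=1$, so the first part applies to $\Aa^\ast=\Aa^\ast(M)$ and yields formality of $M$.

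I do not expect a genuine obstacle, since Theorem \ref{thm:zhou-s-conjecture} already does all the work; the only point needing a word of justification is the passage from \emph{all higher multiplications vanish} to \emph{formal}, which rests on the Kadeishvili-type results recalled in the introduction, to the effect that the homotopy type of the $C_\infty$-algebra obtained by a homotopy transfer captures the DGCA weak equivalence class of $\Aa^\ast$.
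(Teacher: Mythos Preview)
Your proposal is correct and follows exactly the paper's approach: the corollary is stated immediately after Theorem \ref{thm:zhou-s-conjecture} as its $\ell=4$ specialization, with no separate proof given. Your additional remarks on why vanishing of the $m_k$ for $k\geq 3$ yields formality, and on why the de Rham algebra of an $(r-1)$-connected compact manifold satisfies the hypotheses, are the natural justifications the paper leaves implicit.
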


\section{Harrison cohomology and formality}\label{sec:harrform}

 Let $\Aa^*$ be an arbitrary  simply connected    Poincar\'e  DGCA endowed  with a Hodge homotopy. In this  section we  prove   several results   relating  the homotopy  type, i.e., the weak  equivalence  class, of  $\Aa^*$  to  the  Harrison  cohomology  class  $[\mu_3] \in {\rm HHarr} ^{3, -1} ( H^* (\Aa))$ associated  with the   minimal unital
 $C_\infty$-algebra  structure  on $H^* (\Aa)$ defined via the homotopy transfer we described  at the beginning of Subsection \ref{subs:cinftyq}. As shown in  Remark \ref{rem:enrichment}, such a minimal  unital $C_\infty$-structure  on $H^* (\Aa^*)$ is an example of a {\it minimal unital  $C_\infty$-enrichment of $(H^* (\Aa), \cdot)$}. In particular,  we show that $[\mu_3]$ is  the first  obstruction to the formality  of $\Aa^*$ (Lemma  \ref{lemma:first-obs-formality}),  a homotopy invariant  of  $\Aa^*$ (Lemma \ref{lemma:homotopy}),   and that it defines
  completely  the homotopy  type of a   $(r-1)$ connected  Poincar\'e  DGCA  of degree $n \le 4r -1$ admitting a Hodge homotopy  (Theorem \ref{thm:complete-invariant}).  Notice  several   results  in this section  are valid under   slightly weaker  conditions than, e.g., the   Hodge type  requirements,  which are  evident  from their proofs.

If $(N , m_1 =0, m_2, m_3, \ldots )$ is a minimal $A_\infty$-algebra, then $(N,m_2)$  is a graded associative algebra with  multiplication $ a\cdot  b = m_2 (a\otimes b)$. { To avoid confusion and at the same time cumbersome notation, in what follows we will write $N_{[\infty]}$ for the minimal $A_\infty$-algbera $(N , m_1 =0, m_2, m_3, \ldots )$ and simply $N$ for the graded associative algebra $(N,m_2)$.}
The  relation between the  homotopy  type  of the minimal $A_\infty$-algebra  
{$N_{[\infty]}$}
  and  the Hochschild  cochain complex ${\rm CHoch}^{*,*} (N, N)$ of the  graded associative  algebra 
 {$N$}, and in particular with the  Hochschild cohomology groups ${\rm HHoch} ^{n, 2-n} (N, N)$, 
 has  first been considered  by Kadeishvili \cite{Kadeishvili1988}, \cite{Kadeishvili2007}, see also \cite{Kadeishvili2023}.  Moving from $A_\infty$-algebras to $C_\infty$-algebras, any  result  concerning  the relation between minimal $A_\infty$-algebras and their Hochschild  cochain complex ${\rm CHoch} ^{*, *} (N, N)$  is also   valid for  minimal $C_\infty$-algebras,  replacing the Hochschild   cochain complex by the  Harrison cochain complex, i.e., by the subcomplex  of the  Hochschild  complex  ${\rm CHoch}^{*,*} (N, N)$  that consists of all cochains  vanishing on  shuffles. In particular,  Kadeishvili proved that   the homotopy type  of a minimal  $C_\infty$-algebra ${N_{[\infty]}=}(N, 0, m_2, m_3, \ldots)$ is   encoded  in  a suitable equivalence  class of   the Harrison  cochain $m = m_3 + m_4 + \ldots  \in   {\rm CHarr}^{*, 2-*} (N, N)$ { for the DGCA $N=(N,m_2)$}.  As  a  result,  he showed  that if ${\rm HHarr}^{n, 2-n}(N, N)  = 0$ for  $n \ge 3$  then    the  $C_\infty$-algebra   structure  on $N$ is formal, i.e., it is $C_\infty$-isomorphic to one with  $m _i = 0$ for all $i\ge 3$ \cite[Theorem 2]{Kadeishvili1988}. His theorem  implies   Tanre's result   on the formality  of a  simply connected  space  $M$ if  the  Harrison cohomology  ${\rm HHarr^{*, 2-*}} ({ H^*(M)}, { H^*(M)})$ vanishes  \cite{Tanre1985},  cf. \cite[Theorem 17]{Kadeishvili2023}.  We refer  the reader  to  Kadeishvili works \cite{Kadeishvili1988}, \cite{Kadeishvili2007},\cite{Kadeishvili2023}  for  a detailed      description  of  Harrison    cohomology   ${\rm HHarr}^{*, 2-*}(N, N)$
and the aforementioned  results due  to Kadeishvili.

The real homotopy type of a simply connected compact manifold $M$ is completely encoded in the homotopy class of its de Rham algebra of smooth forms \cite[Theorem D]{Sullivan1975}, see also \cite[Corollary 12.4, p. 117]{GM2013}. Since in characteristic zero the inclusion of DGCAs into $C_\infty$-algebras induces an equivalence of the homotopy categories \cite[Theorem 11.4.8, p.422]{loday-vallette}, we see that the real homotopy type of $M$ is completely encoded in the structure  of the minimal $C_\infty$-enhancement  of $(H^*_{\mathrm{dR}} (M), \wedge)$ 
induced on the de Rham cohomology $H^\ast_{\mathrm{dR}}(M)$ of $M$ by homotopy transfer via a Hodge homotopy as defined in Example \ref{ex:d-}.

Theorem \ref{thm:A2-algebra} and Remark \ref{rem:enrichment}  then tells us that if $r\geq 2$, then the real homotopy type of an $(r-1)$-connected compact $n$-dimensional manifold $M$ with $n\leq $$6r-4$ is completely encoded in the unital $C_\infty$-enrichment  $( H^\ast_{\mathrm{dR}}(M), \wedge, \mu_3,\mu_4)$ of the de Rham cohomology algebra $(H^\ast_{\mathrm{dR}}(M), \wedge)$ of $M$ by 
the ternary multiplication
\[
\mu_3\colon H^{k_1}_{\mathrm{dR}}(M)\otimes H^{k_2}_{\mathrm{dR}}(M)\otimes H^{k_3}_{\mathrm{dR}}(M) \to H^{k_1+k_2+k_3-1}_{\mathrm{dR}}(M)
\]
and the quaternary multiplication
\[
\mu_4\colon H^{k_1}_{\mathrm{dR}}(M)\otimes H^{k_2}_{\mathrm{dR}}(M)\otimes H^{k_3}_{\mathrm{dR}}(M)\otimes  H^{k_4}_{\mathrm{dR}}(M) \to H^{k_1+k_2+k_3+k_4-2}_{\mathrm{dR}}(M)
\]
induced by $m_3$ and $m_4$ via the canonical linear isomorphism $\mathcal{H}^\ast(M)\xrightarrow{\sim} H^\ast_{\mathrm{dR}}(M)$. Moreover, if $n\leq 5r-3$ then $\mu_4$ vanishes and so the
real homotopy type of $M$ is completely encoded in the unital $C_\infty$-enrichment $(\mathcal{H}^\ast_{\mathrm{dR}}(M), \wedge, \mu_3)$ of $(\mathcal{H}^\ast_{\mathrm{dR}}(M), \wedge)$. These 
facts can be seen as a unital $C_\infty$-algebra counterpart of results in rational homotopy theory by Crowley-Nordstr\"om \cite[Theorems 1,2]{CN} for $n=5r-3,$  and  they are closely  related  to   some result and conjecture by Nagy-Nordstr\"om \cite[Theorem 1.7 and Conjecture 1.9]{NN2021} for $n= 5r-2$.

Let now $\Aa^\ast$ be an arbitrary Poincar\'e DGCA of degree $n$ endowed with a Hodge homotopy. 
Recall that
the homotopy transfer works generally, and the result is a unital $C_\infty$-algebra $(\mathcal{H}^\ast, m_2, m_3,m_4,m_5,\dots)$, where the multiplications $m_2$, $m_3$ and $m_4$ are the same as in Theorem \ref{thm:A2-algebra}: none of the trees contributing to $m_2$, $m_3$ and $m_4$ is discarded there due to the bound on the dimension. Via the canonical linear isomorphism $\mathcal{H}^\ast\to H^\ast(\Aa)$ we get a unital $C_\infty$-enrichment $(H^\ast(\Aa), \cdot, \mu_3,\mu_4,\mu_5,\dots)$ of the cohomology algebra $(H^\ast(\Aa), \cdot)$ of $\Aa^\ast$. We focus on the ternary multiplication $\mu_3$. Since it is the ternary multiplication of a minimal $C_\infty$-algebra which is a unital $C_\infty$-enrichment  of $(H^\ast(\Aa),\cdot)$, it is a cocycle of degree $(3,-1)$ in the Harrison complex of $(H^\ast(\Aa), \cdot)$. 
The first relevant fact on this cocycle is the following.
\begin{lemma}\label{lemma:canonical}
The Harrison cohomology class of $\mu_3$ is independent of the Hodge homotopy used to define $m_3$. Therefore if $\Aa^\ast$ admits a Hodge homotopy we have a well defined cohomology class $[\mu_3]\in \mathrm{HHarr}^{3,-1}(H^\ast(\Aa),H^\ast(\Aa))$ only depending on the Poincar\'e DGCA $\Aa^\ast$ and not on the choice of a Hodge homotopy.
\end{lemma}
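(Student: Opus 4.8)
The statement is that the Harrison class $[\mu_3]\in \mathrm{HHarr}^{3,-1}(H^\ast(\Aa),H^\ast(\Aa))$ does not depend on the choice of Hodge homotopy. The natural approach is to compare the minimal unital $C_\infty$-structures produced by two different Hodge homotopies $d^-_0$ and $d^-_1$ and to exhibit a $C_\infty$-quasi-isomorphism between them, then to extract what this quasi-isomorphism says about the ternary multiplications at the level of Harrison cohomology. Concretely, I would first fix two Hodge homotopies for $\Aa^\ast$, pass to the respective small quotient algebras and harmonic subspaces, and use the homotopy transfer theorem to get two minimal unital $C_\infty$-structures, with multiplications $m_3^{(0)}$ and $m_3^{(1)}$. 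Both are $C_\infty$-quasi-isomorphic to $\Aa^\ast$ (Theorem \ref{thm:A2-algebra} and the general homotopy transfer theorem, which applies without any dimension bound for $m_2,m_3,m_4$), hence $C_\infty$-quasi-isomorphic to each other; moreover the linear part of each such quasi-isomorphism is the canonical isomorphism $\mathcal{H}^\ast\xrightarrow{\sim} H^\ast(\Aa)$, so after transport to $H^\ast(\Aa)$ we get two minimal unital $C_\infty$-enrichments of the same DGCA $(H^\ast(\Aa),\cdot)$ related by a $C_\infty$-isomorphism whose linear part is the identity of $H^\ast(\Aa)$.

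Second, I would invoke the standard fact (due to Kadeishvili, as recalled in the paragraph preceding the lemma) that a $C_\infty$-isomorphism between minimal unital $C_\infty$-enrichments of a fixed DGCA $N=(H^\ast(\Aa),\cdot)$ whose linear part is $\mathrm{id}_N$ is governed by the Harrison cochain complex: writing $f=\mathrm{id}+f_2+f_3+\cdots$ for the $C_\infty$-isomorphism, the $C_\infty$-relations in the lowest nontrivial arity give precisely
\[
\mu_3^{(1)}-\mu_3^{(0)}=\partial_{\mathrm{Harr}}\, f_2,
\]
where $\partial_{\mathrm{Harr}}$ is the Harrison differential and $f_2\in\mathrm{CHarr}^{2,-1}(N,N)$ (the $C_\infty$, as opposed to merely $A_\infty$, hypothesis is exactly what forces $f_2$ and $f_3$ to be Harrison, i.e. to vanish on shuffles). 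Hence $[\mu_3^{(0)}]=[\mu_3^{(1)}]$ in $\mathrm{HHarr}^{3,-1}(N,N)$. Unwinding the transport along the canonical isomorphism $\mathcal{H}^\ast\xrightarrow{\sim}H^\ast(\Aa)$ — which is an isomorphism of the underlying graded algebras and hence induces an isomorphism on Harrison cohomology — gives the independence statement for $[\mu_3]$ itself, and this defines $[\mu_3]$ as an invariant of $\Aa^\ast$ alone.

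**The main obstacle.** The essential point, and the step that deserves the most care, is producing the $C_\infty$-quasi-isomorphism between the two transferred structures with controlled linear part, and then reading off the arity-three relation correctly with all signs. One clean way to get the comparison quasi-isomorphism is: both transferred structures come equipped, via the homotopy transfer theorem, with an explicit $C_\infty$-quasi-isomorphism $g^{(i)}\colon (\mathcal{H}^\ast, m_\bullet^{(i)})\to \Qq_i^\ast$ (or into $\Aa^\ast$ after composing with the zig-zag of Proposition \ref{prop:qsmall}) having linear part the inclusion of harmonic elements; inverting one of these in the homotopy category of $C_\infty$-algebras (which is legitimate since a $C_\infty$-quasi-isomorphism with invertible linear part is invertible up to homotopy) and composing yields $f\colon (\mathcal{H}^\ast, m_\bullet^{(0)})\to (\mathcal{H}^\ast, m_\bullet^{(1)})$ with $f_1$ the identity of $\mathcal{H}^\ast$. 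I then have to check that $f$ can be taken to be a unital $C_\infty$-morphism and that its components $f_2,f_3$ lie in the Harrison subcomplex; the former follows from the unital homotopy transfer of \cite{CG2008}, and the latter is the standard observation that a $C_\infty$-morphism between $C_\infty$-algebras has components vanishing on shuffles. Everything after this is a bookkeeping exercise in the $C_\infty$-morphism equations, but it is worth writing the arity-three equation out in full so that the identification of the correction term with a Harrison coboundary is transparent.
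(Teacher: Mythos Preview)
Your proposal is correct and follows essentially the same approach as the paper: both argue that two Hodge homotopies yield two minimal unital $C_\infty$-enrichments of $(H^\ast(\Aa),\cdot)$ related by a $C_\infty$-isomorphism with linear part the identity, and then read off $\mu_3-\mu_3'=d_{\mathrm{Harr}}\phi_2$ from the arity-three component of the morphism equations. Your write-up is in fact more detailed than the paper's (which simply asserts the existence of the comparison isomorphism ``by construction''); one small slip is the parenthetical justification ``a $C_\infty$-quasi-isomorphism with invertible linear part is invertible up to homotopy'', since the linear part of $g^{(i)}\colon \mathcal{H}^\ast\hookrightarrow \Qq_i^\ast$ is not invertible---but the conclusion still holds because any $C_\infty$-quasi-isomorphism is invertible in the homotopy category (or, more concretely, because homotopy transfer also furnishes the retraction $\pi_{\mathcal{H}^\ast}$).
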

\begin{proof}
Let $(H^\ast(\Aa), \cdot, \mu_3,\mu_4,\mu_5,\dots)$ and $(H^\ast(\Aa), \cdot, \mu_3',\mu_4',\mu_5',\dots)$ two unital $C_\infty$-enrichments of $(H^\ast(\Aa), \cdot)$ induced by two Hodge homotopies. Then, by construction, the two unital $C_\infty$-algebras $(H^\ast(\Aa), \cdot, \mu_3,\mu_4,\mu_5,\dots)$ and $(H^\ast(\Aa), \cdot, \mu_3',\mu_4',\mu_5',\dots)$ are isomorphic by a $C_\infty$-isomorphism whose linear part is the identity of $H^\ast(\Aa)$. In this case, the quadratic componenst $\phi_2$ of this $C_\infty$-isomorphism is a Harrison cochain with $d_{\mathrm{Harr}}\phi_2=\mu_3=\mu_3'$, and so $[\mu_3]=[\mu_3']$.
\end{proof}

\begin{lemma}\label{lemma:first-obs-formality} The Harrison cohomology class $[\mu_3]$ is the first obstruction to the formality of a simply connected Poincar\'e DGCA $\Aa^\ast$. 
\end{lemma}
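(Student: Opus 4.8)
The plan is to argue that if $\Aa^\ast$ is formal then $[\mu_3]=0$, and that when $[\mu_3]=0$ one can build a $C_\infty$-isomorphism from $(H^\ast(\Aa),\cdot,\mu_3,\mu_4,\dots)$ to a unital $C_\infty$-algebra whose ternary operation is trivial, i.e. one in which $\mu_3$ can be removed and the first possibly non-vanishing higher operation is $\mu_4'$. Combined with Kadeishvili's description of the homotopy type of a minimal $C_\infty$-algebra via the Harrison cochain $m_3+m_4+\cdots$, this shows that $[\mu_3]$ is the \emph{first} obstruction to formality: formality forces $[\mu_3]=0$, and conversely $[\mu_3]=0$ allows one to pass to a $C_\infty$-model with no ternary operation, so that whatever obstruction to formality remains is carried by the operations of arity $\geq 4$.

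First I would recall that by Lemma \ref{lemma:canonical} the class $[\mu_3]\in \mathrm{HHarr}^{3,-1}(H^\ast(\Aa),H^\ast(\Aa))$ is well-defined and independent of the Hodge homotopy; hence formality of $\Aa^\ast$ may be tested on \emph{any} Hodge homotopy transfer model. If $\Aa^\ast$ is formal, then by definition the minimal unital $C_\infty$-algebra $(H^\ast(\Aa),\cdot,\mu_3,\mu_4,\dots)$ is $C_\infty$-isomorphic to $(H^\ast(\Aa),\cdot,0,0,\dots)$. As in the proof of Lemma \ref{lemma:canonical}, such a $C_\infty$-isomorphism can be taken to have identity linear part (one composes with the inverse of its linear part, which is an automorphism of the DGCA $H^\ast(\Aa)$), and then its quadratic component $\phi_2$ is a Harrison $2$-cochain of degree $-1$ with $d_{\mathrm{Harr}}\phi_2=\mu_3-0=\mu_3$. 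Therefore $[\mu_3]=0$. This is the easy direction.

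For the converse — that $[\mu_3]=0$ already suffices to kill the ternary operation up to $C_\infty$-isomorphism — I would use the standard obstruction-theoretic/decomposition argument for $C_\infty$- (equivalently, via Harrison cohomology, for the commutative analogue of $A_\infty$-) structures, in the form recorded by Kadeishvili \cite{Kadeishvili1988,Kadeishvili2023}: a minimal unital $C_\infty$-enrichment of $(H^\ast(\Aa),\cdot)$ is the same as a Maurer–Cartan element $m=\mu_3+\mu_4+\cdots$ in the (shifted, filtered) Harrison deformation complex, and two such are gauge-equivalent precisely when they give $C_\infty$-isomorphic structures. The leading term $\mu_3$ is a Harrison cocycle; if $[\mu_3]=0$, pick $\phi_2$ with $d_{\mathrm{Harr}}\phi_2=\mu_3$ and apply the gauge transformation generated by $\phi_2$. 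This replaces $m$ by a gauge-equivalent Maurer–Cartan element whose arity-$3$ component vanishes and whose lowest-arity component is now in arity $\geq 4$; unitality is preserved since the gauge group acts within unital $C_\infty$-structures. Hence the transferred $C_\infty$-structure on $H^\ast(\Aa)$, and so (by Kadeishvili, since the homotopy type of $\Aa^\ast$ is captured by the homotopy type of this transferred minimal $C_\infty$-algebra) the homotopy type of $\Aa^\ast$, is represented by a minimal unital $C_\infty$-algebra with $\mu_3=0$. Therefore $[\mu_3]$ is indeed the first obstruction: it must vanish for formality, and once it does, all further obstructions live in arities $\geq 4$.

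The main obstacle I expect is bookkeeping rather than conceptual: one must be careful that the gauge transformation used in the converse direction stays inside the class of \emph{unital} $C_\infty$-morphisms and that the sign/degree conventions in the Harrison complex (degree $(3,-1)$, the shift by $2-k$ in the arity-$k$ operations) are consistent with the Maurer–Cartan/gauge formalism one invokes — this is exactly the kind of verification that is implicit when one cites \cite{CG2008}, \cite{Kadeishvili1988}, \cite{loday-vallette}. One should also note, as the excerpt already hints, that the Poincaré/Hodge-type hypotheses are used here only to guarantee the \emph{existence} of the Hodge homotopy transfer producing the minimal unital $C_\infty$-model; the obstruction-theoretic statement itself is formal and holds for any (simply connected) DGCA, so the lemma could be stated under weaker assumptions.
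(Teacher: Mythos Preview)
Your forward direction (formality $\Rightarrow [\mu_3]=0$) is exactly the paper's proof, with the useful extra remark that the $C_\infty$-isomorphism may be taken to have identity linear part by composing with the inverse of its linear component. The paper's proof stops there: it interprets ``first obstruction'' only as the necessary condition and does not argue the converse within the proof. Your additional gauge-theoretic paragraph---choosing $\phi_2$ with $d_{\mathrm{Harr}}\phi_2=\mu_3$ and using it to pass to a $C_\infty$-isomorphic structure with vanishing ternary operation---is correct and is precisely what the paper records separately as the content of the subsequent Remark (on secondary obstructions). So your argument is not a different route, just a more complete packaging of the same ideas: you incorporate into the proof what the paper splits between the proof and a remark.
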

\begin{proof}
If $\Aa^\ast$ is formal, there exists an isomorphism of $C_\infty$-algebras between $(H^\ast(\Aa), \cdot, \mu_3,\mu_4,\mu_5,\dots)$ and $(H^\ast(\Aa), \cdot)$. The component $\phi_2$ of bidegree $(2,-1)$ of this $C_\infty$-isomorphism is a Harrison cochain with $d_{\mathrm{Harr}}\phi_2=\mu_3$. Hence if $\Aa^\ast$ is formal then $[\mu_3]=0$. 
\end{proof}
More generally one has the following.
\begin{lemma}\label{lemma:homotopy} The graded commutative algebra $(H^\ast(\Aa),\cdot)$ and the Harrison cohomology class $[\mu_3]$ are invariants of the DGCA homotopy class of a simply connected Poincar\'e DGCA $\Aa^\ast$ admitting a Hodge homotopy, i.e., if $\Aa^\ast$ and ${\Aa'}^\ast$ are in the same homotopy class, then there exists an isomorphism of graded commutative  algebras $\phi\colon  (H^\ast(\Aa),\cdot_{\Aa})\xrightarrow{\sim} (H^\ast(\Aa'),\cdot_{\Aa'})$ mapping $[\mu_3]$ to $[\mu_3']$ via the induced isomorphism of Harrison cohomologies.
\end{lemma}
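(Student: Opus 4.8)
The plan is to derive the statement from the essential uniqueness of minimal $C_\infty$-models, together with the translation of the arity-$2$ and arity-$3$ components of a $C_\infty$-isomorphism into, respectively, an isomorphism of graded algebras and an identity in Harrison cohomology. By Lemma \ref{lemma:canonical} the class $[\mu_3]$ does not depend on the Hodge homotopy, so I may fix arbitrary Hodge homotopies on $\Aa^\ast$ and on ${\Aa'}^\ast$ and work with the resulting minimal unital $C_\infty$-enrichments $(H^\ast(\Aa),\mu_2,\mu_3,\mu_4,\dots)$ and $(H^\ast(\Aa'),\mu_2',\mu_3',\mu_4',\dots)$ of $(H^\ast(\Aa),\cdot_\Aa)$ and $(H^\ast(\Aa'),\cdot_{\Aa'})$, where $\mu_2=\cdot_\Aa$ and $\mu_2'=\cdot_{\Aa'}$. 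By the Hodge homotopy transfer recalled at the beginning of Subsection \ref{subs:cinftyq} and used in the proof of Theorem \ref{thm:A2-algebra}, the first is $C_\infty$-quasi-isomorphic to $\Aa^\ast$ and the second to ${\Aa'}^\ast$. Since $\Aa^\ast$ and ${\Aa'}^\ast$ lie in the same DGCA homotopy class they are linked by a zig-zag of DGCA quasi-isomorphisms; regarding DGCAs as $C_\infty$-algebras and using that $C_\infty$-quasi-isomorphisms admit homotopy inverses in characteristic zero \cite[Theorem 11.4.8, p.422]{loday-vallette}, this zig-zag collapses to a single $C_\infty$-quasi-isomorphism $\Aa^\ast\to{\Aa'}^\ast$. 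Composing, the two minimal unital $C_\infty$-algebras $(H^\ast(\Aa),\mu_2,\mu_3,\dots)$ and $(H^\ast(\Aa'),\mu_2',\mu_3',\dots)$ are $C_\infty$-quasi-isomorphic.

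Next I would use that a $C_\infty$-quasi-isomorphism $\Phi$ between minimal $C_\infty$-algebras is automatically a $C_\infty$-isomorphism: its linear component $\phi:=\Phi_1$ is a quasi-isomorphism of cochain complexes with vanishing differentials, hence a linear isomorphism $H^\ast(\Aa)\xrightarrow{\sim}H^\ast(\Aa')$. With all internal-differential terms absent, the arity-$2$ component of the $C_\infty$-morphism equation reads $\phi\circ\mu_2=\mu_2'\circ\phi^{\otimes 2}$, so $\phi$ is an isomorphism of graded commutative algebras $(H^\ast(\Aa),\cdot_\Aa)\xrightarrow{\sim}(H^\ast(\Aa'),\cdot_{\Aa'})$; this is the first assertion of the lemma. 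The arity-$3$ component, again with the differential terms vanishing, reads
\begin{align*}
\phi\circ\mu_3 - \mu_3'\circ\phi^{\otimes 3} ={}& \mu_2'\circ(\phi\otimes\Phi_2)\ \pm\ \mu_2'\circ(\Phi_2\otimes\phi)\\
&\mp\ \Phi_2\circ(\mu_2\otimes\mathrm{id})\ \mp\ \Phi_2\circ(\mathrm{id}\otimes\mu_2),
\end{align*}
where $\Phi_2$ is the arity-$2$ component of $\Phi$. Transporting both sides along $\phi$ and $\phi^{-1}$ into the Harrison complex of $(H^\ast(\Aa),\cdot_\Aa)$, the right-hand side becomes $d_{\mathrm{Harr}}(\psi)$ with $\psi:=\phi^{-1}\circ\Phi_2\circ\phi^{\otimes 2}$, which is a genuine Harrison cochain because $\Phi$, like the structures it connects, is a $C_\infty$- (and not merely $A_\infty$-) morphism, so $\Phi_2$, hence $\psi$, vanishes on shuffles. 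Therefore the isomorphism of Harrison cohomologies induced by $\phi$ carries $[\mu_3]$ to $[\mu_3']$, which is the second assertion.

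The step demanding the most care is the first one: passing from the given DGCA weak-equivalence zig-zag to a single $C_\infty$-quasi-isomorphism and checking it can be taken unital, so that the source and target of $\Phi$ are precisely the unital minimal $C_\infty$-enrichments of $(H^\ast(\Aa),\cdot_\Aa)$ and $(H^\ast(\Aa'),\cdot_{\Aa'})$ constructed in the previous sections and not some non-unital variant. The remainder — writing out the arity-$2$ and arity-$3$ components of the $C_\infty$-morphism relation with the correct Koszul signs and recognising the displayed right-hand side as a Harrison coboundary via the explicit form of the Harrison differential — is routine bookkeeping, which I would carry out only as far as needed to pin down the signs.
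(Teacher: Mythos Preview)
Your argument is correct and follows essentially the same route as the paper: both proofs reduce the zig-zag to a single $C_\infty$-(quasi-)isomorphism between the minimal $C_\infty$-enrichments on cohomology and then read off from its arity-$2$ and arity-$3$ components the algebra isomorphism $\phi$ and the Harrison coboundary witnessing $\phi_*[\mu_3]=[\mu_3']$. The paper's proof is much terser---it just says ``the same proof as for Lemma~\ref{lemma:first-obs-formality}''---whereas you spell out the $C_\infty$-morphism equations and the zig-zag collapse explicitly, but the underlying mechanism is identical.
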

\begin{proof}
If $\Aa^\ast$ and ${\Aa'}^\ast$ are in the same homotopy class then there exists a zig-zag of quasi-isomorphisms of DGCAs connecting $\Aa^\ast$ and ${\Aa'}^\ast$. So we are reduced to showing that if $\varphi\colon \Aa^\ast \to {\Aa'}^\ast$ is a quasi-isomorphism of DGCAs then there exists an isomorphism of graded commutative  $\phi\colon  (H^\ast(\Aa),\cdot_\Aa)\xrightarrow{\sim} (H^\ast(\Aa'),\cdot_{\Aa'})$ mapping $[\mu_3]$ to $[\mu_3']$ via the induced isomorphism of Harrison cohomologies. The same proof as for Lemma \ref{lemma:first-obs-formality} shows that $\phi=H^\ast(\varphi)$ is such an isomorphism.
\end{proof}

\begin{remark}\label{rem:only-first}
The vanishing of $[\mu_3]$ is the first obstruction to formality of $\Aa^\ast$, but it is generally not the only obstruction. Indeed, if $[\mu_3]=0$, then there exists a Harrison cochain $\phi_2$ of bidegree $(2,-1)$ with $d_{\mathrm{Harr}}\phi_2=\mu_3$ and one can use it to define a $C_\infty$-isomorphism between $(H^\ast(\Aa), \cdot, \mu_3,\mu_4,\mu_5,\dots)$ and a $C_\infty$-algebra of the form $(H^\ast(\Aa), \cdot, 0,\tilde{\mu}_4,\tilde{\mu}_5,\dots)$, were now the ternary multiplication vanishes. We then have the Harrison cohomology class of the new quaternary multiplication $\tilde{\mu}_4$, that is an element in $\mathrm{HHarr}^{4,-2}(H^\ast(\Aa),H^\ast(\Aa))$ that is the secondary obstruction to formality. Notice however that, as usual in obstruction theory, while the first obstruction $[\mu_3]$ is canonical, the higher obstructions are not. Indeed the Harrison cohomology class $[\tilde{\mu}_4]$ depends on the choice of the cobounding cochain $\phi_2$ for $\mu_3$. This fact can be seen as a Harrison cohomology counterpart that the Bianchi-Massey tensor defined in \cite{CN} is canonical, while the pentagonal tensor of \cite{NN2021} is not.
Similarly, the isomorphism class of the graded commutative algebra $(H^\ast(\Aa),\cdot)$ and the Harrison cohomology class $[\mu_3]$ are invariants of the homotopy class of $\Aa^\ast$, but generally they do not form a complete invariant.
\end{remark}

\begin{remark}
The Harrison cohomology class $[\mu_3]$, its canonicity, and its role as the first obstruction to the formality, as well as the corresponding Hochschild cohomology class for noncommutative DGAs are widely discussed in the literature. See \cite{BKS2003, JKM2022} and the references therein for additional information. 
\end{remark}

Despite we have observed in Remark \ref{rem:only-first} that the vanishing of $[\mu_3]$ is generally not a sufficient condition for the formality of $\Aa^\ast$ and that the pair $((H^\ast(\Aa),\cdot), [\mu_3])$ is not a complete invariant, there is at least an interesting situation where this invariant is complete so that in particular the condition $[\mu_3]=0$ is indeed sufficient for formality. This is the case when $\Aa^\ast$ is an $(r-1)$-connected Poincar\'e algebra of degree $4r-1$. In order to prove this result we need a couple of preliminary Lemmas.

Recall that the normalized Hochschild complex $\mathrm{CHoch}_\nu(A,A)$ of a graded associative $\mathbb{K}$-algebra $A$ with coefficients in itself is the bigraded subcomplex of the Hochschild complex $(\mathrm{CHoch}(A,A),d_{\mathrm{Hoch}})$ consisting on those cochains that vanish on constants, i.e., in each bidegree $(n,\bullet)$ of those $\varphi\colon A^{\otimes n}\to A$ such that $\varphi(a_1,\dots,a_n)=0$ if one of the $a_i$ is the unit of the algebra $A$ (or, equivalently, by $\mathbb{K}$-linearity, it is an element of $\mathbb{K}$). Note that operations $\mu_3$ and $\mu_4$ in  a unital minimal $C_\infty$-enrichment  of $(H^* (\Aa^*), \cdot)$ are  elements in $(\mathrm{CHoch}_\nu(H^* (\Aa^*),H^* (\Aa^*)),d_{\mathrm{Hoch}})$. The most important feature of the normalized Hochschild complex is the fact that the inclusion $\mathrm{CHoch}_\nu(A,A)\hookrightarrow \mathrm{CHoch}(A,A)$ is a quasiisomorphism, i.e., induces an isomorphism in cohomology (see, e.g., \cite[\S 1.5.7, p. 40]{loday}). However, in the next lemma we will not need this fact, but merely the fact that  $\mathrm{CHoch}_\nu(A,A)$ is a subcomplex of 
$\mathrm{CHoch}(A,A)$.

\begin{lemma}\label{lemma:prague}
Let $r\geq2$ and let $\Aa^\ast$ be an $(r-1)$-connected Poincar\'e algebra of degree $n$ with $n\leq 4r-1$. Let $(H^\ast(\Aa),\cdot,\mu_3)$ and $(H^\ast(\Aa),\cdot,\mu'_3)$ be two minimal $C_\infty$-enhancements of $(H^\ast(\Aa),\cdot)$ such that $\mu_3(a,b,c)$ and $\mu'_3(a,b,c)$ are zero unless $\deg(a,b,c)=(r,r,r)$. Then $[\mu_3]=[\mu_3']$ in $\mathrm{HHarr}^{3,-1}(H^\ast(\Aa),H^\ast(\Aa))$ if and only if there exists $\phi_2\in \mathrm{CHarr}^{2,-1}(H^\ast(\Aa),H^\ast(\Aa))$ such that
\begin{enumerate}
\item $d_{\mathrm{Hoch}}\phi_2=\mu_3-\mu_3'$;
\item $\phi_2(a,b)$ vanishes unless $\deg(a)$ and $\deg(b)$ are strictly positive integer multiples of $r$.
\end{enumerate}
\end{lemma}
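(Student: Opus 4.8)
The \emph{if} direction is immediate: if $\phi_2\in\mathrm{CHarr}^{2,-1}(H^\ast(\Aa),H^\ast(\Aa))$ satisfies condition (1), then $\mu_3-\mu_3'=d_{\mathrm{Hoch}}\phi_2$ exhibits $\mu_3-\mu_3'$ as a Harrison coboundary — recall that $\mathrm{CHarr}$ is a subcomplex of $\mathrm{CHoch}$ and that $\mu_3,\mu_3'$ are Harrison cocycles — so $[\mu_3]=[\mu_3']$. Condition (2) plays no role in this implication.

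For the converse the plan is to truncate a witnessing Harrison cochain. Assume $[\mu_3]=[\mu_3']$ and choose any $\psi_2\in\mathrm{CHarr}^{2,-1}(H^\ast(\Aa),H^\ast(\Aa))$ with $d_{\mathrm{Hoch}}\psi_2=\mu_3-\mu_3'$. Write $\psi_2=\sum_{(i,j)}\psi_2^{(i,j)}$, where $\psi_2^{(i,j)}$ is the part of $\psi_2$ carried by arguments of internal degrees $i$ and $j$, and set $\phi_2:=\sum\psi_2^{(i,j)}$, the sum being over the pairs $(i,j)$ with $i,j>0$ and $r\mid i$, $r\mid j$. Then $\phi_2$ satisfies (2) by construction; moreover, since that index set is stable under transposition and avoids degree $0$, the cochain $\phi_2$ is automatically normalized and inherits shuffle-vanishing from $\psi_2$, so $\phi_2\in\mathrm{CHarr}^{2,-1}$. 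Everything thus reduces to checking (1), i.e. $d_{\mathrm{Hoch}}\phi_2=\mu_3-\mu_3'$, equivalently $d_{\mathrm{Hoch}}(\psi_2-\phi_2)=0$.

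To verify this I would use two facts. First, being an $(r-1)$-connected Poincar\'e algebra of degree $n\leq4r-1$, the algebra $H^\ast(\Aa)$ is supported in degrees $\{0\}\cup[r,n-r]\cup\{n\}$; in particular, whenever $H^{3r-1}(\Aa)\ne0$ — which, in the range $n\leq4r-1$, forces $n=3r-1$ or $n=4r-1$ — one has $H^{3r}(\Aa)=0$. Second, $d_{\mathrm{Hoch}}\psi_2=\mu_3-\mu_3'$ is carried by input triples of tridegree $(r,r,r)$. If $H^{3r-1}(\Aa)=0$ then $\mu_3=\mu_3'=0$ and $\phi_2=0$ works, so assume $n\in\{3r-1,4r-1\}$; then the only pairs of positive multiples of $r$ carrying a possibly-nonzero component of $\phi_2$ are $(r,r)$ when $n=3r-1$ and $(r,r),(r,2r),(2r,r),(2r,2r)$ when $n=4r-1$. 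Now evaluate $d_{\mathrm{Hoch}}\phi_2$ on a homogeneous triple $(a,b,c)$: if some entry is a scalar, $d_{\mathrm{Hoch}}\phi_2(a,b,c)$ and $d_{\mathrm{Hoch}}\psi_2(a,b,c)$ both vanish, using unitality of $\cdot$ and that $\phi_2$ kills scalars; if $(\deg a,\deg b,\deg c)=(r,r,r)$, the four terms of $d_{\mathrm{Hoch}}\psi_2(a,b,c)$ feed $\psi_2$ only the pairs $(r,r),(2r,r),(r,2r)$, hence $d_{\mathrm{Hoch}}\phi_2(a,b,c)=d_{\mathrm{Hoch}}\psi_2(a,b,c)=(\mu_3-\mu_3')(a,b,c)$; in every other case all entries lie in $[r,n-r]$ and are not all equal to $r$, and a term-by-term inspection of $d_{\mathrm{Hoch}}\phi_2(a,b,c)$ shows that each pair fed to $\phi_2$ is either not of the prescribed form, or forces an intermediate product $ab$ or $bc$ into $H^{3r}(\Aa)=0$, so the corresponding term vanishes, \emph{except} for the finitely many triples of shape $(2r,r,r),(r,2r,r),(r,r,2r)$ arising when $n=4r-1$; on those one matches $d_{\mathrm{Hoch}}\phi_2(a,b,c)$ against $d_{\mathrm{Hoch}}\psi_2(a,b,c)=0$ term by term, the discarded ``bad'' terms of $d_{\mathrm{Hoch}}\psi_2(a,b,c)$ being zero for the same degree reasons. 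Collecting the three cases gives $d_{\mathrm{Hoch}}\phi_2=\mu_3-\mu_3'$.

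The main obstacle is exactly this last degree bookkeeping: one has to be sure that, after truncating $\psi_2$ to $\phi_2$, no ``good'' pair ever receives a genuinely nonzero intermediate product outside the handful of triples matched directly against $d_{\mathrm{Hoch}}\psi_2=0$. The hypothesis $n\leq4r-1$ is essential here — it is what makes $H^{3r}(\Aa)=0$ as soon as $H^{3r-1}(\Aa)\ne0$ — and the argument genuinely breaks for larger $n$. A minor point: since the scalar-argument case is handled by hand and $\phi_2$ is normalized, the proof uses only that the normalized complex is a subcomplex of $\mathrm{CHoch}$, not that its inclusion is a quasi-isomorphism.
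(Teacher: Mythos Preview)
Your approach is the same as the paper's: truncate a Harrison coboundary witness $\psi_2$ to the subspace of pairs whose degrees are positive multiples of $r$, then verify by case analysis that the truncated cochain $\phi_2$ still satisfies $d_{\mathrm{Hoch}}\phi_2=\mu_3-\mu_3'$. The ``if'' direction, the reduction to $n\in\{3r-1,4r-1\}$, the scalar case, the $(r,r,r)$ case, and the handling of the three exceptional triples are all fine.

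The gap is in your treatment of the remaining triples in Case~3. You assert that for a non-exceptional triple with entries in $[r,n-r]$ and not all equal to $r$, each of the four terms of $d_{\mathrm{Hoch}}\phi_2(a,b,c)$ vanishes either because the pair fed to $\phi_2$ is not of the prescribed form, or because an intermediate product $ab$ or $bc$ lies in $H^{3r}=0$. These two reasons do not cover the ``outer'' terms $a\,\phi_2(b,c)$ and $\phi_2(a,b)\,c$ when the pair \emph{is} prescribed. For instance, with $n=4r-1$ and $(\deg a,\deg b,\deg c)=(2r,2r,r)$, the pair $(\deg b,\deg c)=(2r,r)$ is prescribed and there is no intermediate product; the term $a\,\phi_2(b,c)$ vanishes only because it lands in $H^{5r-1}=0$. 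Similarly, for $(\deg a,\deg b,\deg c)=(r+1,r,r)$ the term $a\,\phi_2(b,c)$ has the prescribed pair $(r,r)$ and vanishes only because the \emph{outer} product $a\cdot\phi_2(b,c)$ lies in $H^{3r}=0$, not because $ab$ or $bc$ does. (You also claim ``all entries lie in $[r,n-r]$'', overlooking degree-$n$ entries; that case is harmless since $n$ is not a multiple of $r$, but it should be mentioned.)

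The fix is exactly what the paper does: before the term-by-term inspection, observe that $d_{\mathrm{Hoch}}\phi_2(a,b,c)\in H^k$ with $k=\deg a+\deg b+\deg c-1\ge 3r> n-r$, so the whole expression is automatically zero unless $k=n$. This reduces Case~3 to triples with $\deg a+\deg b+\deg c=n+1$; for $n=4r-1$ that means $k_a+k_b+k_c=r$ with $k_i=\deg(\alpha_i)-r\ge0$, and then your (and the paper's) subcase analysis---either some $k_i$ is strictly between $0$ and $r$, killing all pairs by reason~(a), or two of the $k_i$ vanish, giving the exceptional triples---goes through.
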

\begin{proof}
The ``if" part is obvious, so let us prove the `only if' part. By assumption, there exists $\tilde{\phi}_2\in  \mathrm{CHarr}^{2,-1}_\nu(H^\ast(\Aa),H^\ast(\Aa))$ such that $d_{\mathrm{Harr}}\tilde{\phi}_2=\mu_3-\mu_3'$. Let us set
\[
\phi_2(a,b)=\begin{cases}
\tilde{\phi}_2(a,b)&\text{if $\deg(a)$ and $\deg(b)$ are strictly positive integer multiples of $r$}\\
0 &\text{otherwise}
\end{cases}
\]
We want to show that $d_{\mathrm{Harr}}\phi_2=\mu_3-\mu_3'$. If $\mathrm{min}(\deg(a),\deg(b),\deg(c))=0$, then by assumption, $\mu_3(a,b,c)=\mu_3'(a,b,c)=0$. So we need to show that $d_{\mathrm{Harr}}\phi_2(a,b,c)$ is zero in this case. Since the Harrison complex is a subcomsplex of the Hochschild complex and $\phi_2(a,b)$ vanishes when either $a$ or $b$ has degree zero, the Harrison cochain $\phi_2$ is an element in the normalized Hochschild complex of $H^\ast(\Aa)$. Since the normalized Hochschild complex is a subcomplex of the Hochschild complex we have 
\[
d_{\mathrm{Harr}}\phi_2=d_{\mathrm{Hoch}}\phi_2\in \mathrm{CHoch}_\nu(H^\ast(\Aa),H^\ast(\Aa)).
\]
Since $H^0(\Aa)=\mathbb{K}$, this tells us that also $d_{Harr}\phi_2$ vanishes when $\mathrm{min}(\deg(a),\deg(b),\deg(c))=0$. Now let us assume $\deg(a),\deg(b),\deg(c)>0$. 
If $\deg(a),\deg(b),\deg(c)$ are all multiples of $r$, then
\begin{align*}
d_{\mathrm{Harr}}\phi_2(a,b,c)&=
a\phi_2(b,c)-\phi_2(ab,c)+\phi_2(a,bc)-\phi_2(a,b)c\\
&=a\tilde{\phi}_2(b,c)-\tilde{\phi}_2(ab,c)+\tilde{\phi}_2(a,bc)-\tilde{\phi}_2(a,b)c\\
&=d_{\mathrm{Harr}}\tilde{\phi}_2(a,b,c)\\
&=\mu_3(a,b,c)-\mu_3'(a,b,c).
\end{align*}

If $\deg(a),\deg(b),\deg(c)$ are not all multiples of $r$, then by assumption we have that $\mu_3(a,b,c)=\mu_3'(a,b,c)=0$ so we have to show that also $d_{\mathrm{Harr}}\phi_2(a,b,c)$ vanishes in this case. Since we are assuming $a,b,c$ of strictly positive degree and $\Aa^\ast$ is $(r-1)$-connected this means we can assume $\deg(a),\deg(b),\deg(c)\geq r$. Let us write $\deg(a)=r+k_a$, and similarly for $b$ and $c$. Then the triple $(k_a,k_b,k_c)$ is a triple of nonnegative integers with $k_a+k_b+k_c\geq 1$. The element $d_{\mathrm{Harr}}\phi_2(a,b,c)$ is an element in $H^k(\Aa^\ast)$ with $k=3r-1+(k_a+k_b+k_c)\geq 3r\geq n-r+1$. Since $\Aa^\ast$ is an $(r-1)$-connected, this element will automatically be zero unless $k=n$, i.e., unless
$k_a+k_b+k_c=r$.

Since $k_a,k_b,k_c$ are nonnegative integers, if they are all nonzero, then the condition $k_a+k_b+k_c=r$ implies $0<k_a,k_b,k_c<r$, and so 
\[
r<\deg(a), \deg(b),\deg(c)<2r<\deg(ab), \deg(bc).
\]
This implies $d_{\mathrm{Harr}}\phi_2(a,b,c)=0$. To analyze the remaining cases, let us begin by assuming $k_a=0$. If $0<k_b<r$  (or, equivalently, $0<k_c<r$) then we have
\[
r<\deg(b),\deg(c)<2r<\deg(ab), \deg(bc),
\]
so again $d_{\mathrm{Harr}}\phi_2(a,b,c)=0$. The cases $k_b=0$ and $0<k_a<r$ (or, equivalently, $0<k_c<r$), or $k-c=0$ and $0<k_a<r$ (or, equivalently, $0<k_b<r$) are analogous.
We are therefore left with only three cases to consider:
\[
(k_a,k_b,k_c)\in \{(0,0,r), (0,r,0),(r,0,0)\}.
\]
But in all these three cases we have that $\deg(a),\deg(b),\deg(c)$ are all multiples of $r$, against our assumption.
\end{proof}

\begin{remark}
In the proof of Lemma \ref{lemma:prague} one may be tempted to avoid going through the normalized Hochschild complex and work directly with the normalized Harrison complex. Unfortunately the definition of the latter is not straightforward in general, see \cite[Section 3.2]{graves}, so we preferred to go through the simpler Hochschild framework. 
\end{remark}

\begin{corollary}\label{cor:prague}
Let $r\geq2$ and let $\Aa^\ast$ be an $(r-1)$-connected Poincar\'e algebra of degree $n$ with $n\leq 4r-1$. Let $(H^\ast(\Aa),\cdot,\mu_3)$ and $(H^\ast(\Aa),\cdot,\mu'_3)$ be two minimal unital $C_\infty$-enrichments  of $(H^\ast(\Aa),\cdot)$ such that $\mu_3(a,b,c)$ and $\mu'_3(a,b,c)$ are zero unless $\deg(a,b,c)=(r,r,r)$. Then $(H^\ast(\Aa),\cdot,\mu_3)$ and $(H^\ast(\Aa),\cdot,\mu'_3)$ are isomorphic  $C_\infty$-algebras via a  $C_\infty$-isomorphisms whose linear  component is the identity if and only if $[\mu_3]=[\mu_3']$ in $\mathrm{HHarr}^{3,-1}(H^\ast(\Aa),H^\ast(\Aa))$.
\end{corollary}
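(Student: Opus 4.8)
The plan is to deduce both implications directly from the defining equations of a unital $C_\infty$-morphism, using Lemma \ref{lemma:prague} to produce an isomorphism of a very transparent shape. The ``only if'' direction needs no dimension hypothesis: if $f$ is a unital $C_\infty$-isomorphism $(H^\ast(\Aa),\cdot,\mu_3)\to (H^\ast(\Aa),\cdot,\mu_3')$ with $f_1=\mathrm{id}$, then its component $f_2$, of bidegree $(2,-1)$, lies in $\mathrm{CHarr}^{2,-1}(H^\ast(\Aa),H^\ast(\Aa))$, and the arity-$3$ component of the morphism equation reads $\mu_3-\mu_3'=d_{\mathrm{Hoch}}f_2$; since $\mathrm{CHarr}$ is a subcomplex of $\mathrm{CHoch}$, this gives $[\mu_3]=[\mu_3']$ in $\mathrm{HHarr}^{3,-1}(H^\ast(\Aa),H^\ast(\Aa))$.

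For the converse, assume $[\mu_3]=[\mu_3']$ and, by Lemma \ref{lemma:prague}, fix $\phi_2\in \mathrm{CHarr}^{2,-1}(H^\ast(\Aa),H^\ast(\Aa))$ with $d_{\mathrm{Hoch}}\phi_2=\mu_3-\mu_3'$ such that $\phi_2(a,b)=0$ unless $\deg(a)$ and $\deg(b)$ are both strictly positive multiples of $r$. The claim I would prove is that the family $f$ with $f_1=\mathrm{id}$, $f_2=\phi_2$ and $f_k=0$ for $k\geq 3$ is \emph{already} the desired isomorphism, i.e.\ that no higher corrections are needed. This $f$ is unital and respects shuffles because $\phi_2$ is a Harrison cochain vanishing on degree-$0$ arguments, and it is invertible because $f_1=\mathrm{id}$; so the whole point is to check that $f$ satisfies the $C_\infty$-morphism relations in each arity. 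Arities $1$ and $2$ are trivial, arity $3$ holds by the choice of $\phi_2$, and for arity $N\geq 4$, since $f_k=0$ for $k\geq 3$, the relation reduces to the vanishing of a sum of terms; apart from the single term $\pm\,\phi_2(a_1,a_2)\cdot\phi_2(a_3,a_4)$ occurring for $N=4$, each such term is of one of two manifestly zero kinds: $\mu_3'$ evaluated on a tuple one of whose entries is some $\phi_2(a,b)$ (of degree $\geq 2r-1>r$, while $\mu_3'$ is supported on triples of degree $r$ and $\mu_k'=0$ for $k\geq 4$), or $\phi_2$ evaluated on a pair one of whose entries is some $\mu_3(a,b,c)$ (of degree $3r-1$, not a positive multiple of $r$ since $r\geq 2$).

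So everything reduces to the identity $\phi_2(a_1,a_2)\cdot\phi_2(a_3,a_4)=0$, which is the one place the hypothesis $n\leq 4r-1$ is used. If this product is nonzero then each $\deg(a_i)$ is a positive multiple of $r$ and the product lies in $H^m(\Aa)$ with $m=\sum_i\deg(a_i)-2$; thus $m\geq 4r-2\geq n-1$ and $m\equiv -2\pmod r$. Now $m>n$ forces $H^m(\Aa)=0$; $m=n-1$ forces $H^m(\Aa)=H^{n-1}(\Aa)=0$ by Poincar\'e duality and $(r-1)$-connectedness (using $r\geq 2$); and $m=n$ would force, via $m\geq 4r-2$ and $m\equiv -2\pmod r$, both $n=4r-2$ and $\deg(a_1)=\cdots=\deg(a_4)=r$, in which case $\mu_3$ and $\mu_3'$ are valued in $H^{3r-1}(\Aa)=H^{n-r+1}(\Aa)=0$, so $\mu_3=\mu_3'=0$ and the identity map already proves the statement. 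Hence the product always vanishes, all the arity-$(\geq 4)$ relations hold, and $f=\mathrm{id}+\phi_2$ is a unital $C_\infty$-isomorphism whose linear component is the identity.

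The step I expect to be the real obstacle --- and the conceptual content of the corollary --- is exactly the observation that $\mathrm{id}+\phi_2$ works with no higher components; equivalently, that the a priori secondary obstruction $[\phi_2\smile\phi_2]\in\mathrm{HHarr}^{4,-2}(H^\ast(\Aa),H^\ast(\Aa))$ does not merely vanish in cohomology but vanishes identically when $n\leq 4r-1$. Once this is in place the rest is a routine, if slightly tedious, inspection of the $C_\infty$-morphism relations, and the support property of $\phi_2$ furnished by Lemma \ref{lemma:prague} is precisely what makes all the remaining terms collapse.
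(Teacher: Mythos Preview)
Your proof is correct and follows essentially the same strategy as the paper: invoke Lemma~\ref{lemma:prague} to obtain a $\phi_2$ with controlled support, then verify directly that $(\mathrm{id},\phi_2,0,0,\dots)$ satisfies all the $C_\infty$-morphism relations. The paper organizes the verification arity by arity ($p=3,4,5,6$, with $p\geq 7$ trivially empty), whereas you group the terms by type ($\mu_3'$ with a $\phi_2$ input, $\phi_2$ with a $\mu_3$ input, and the single $\phi_2\cdot\phi_2$ term); the content is the same. Your case analysis for $\phi_2(a_1,a_2)\cdot\phi_2(a_3,a_4)$ is in fact a bit more careful than the paper's: the paper writes ``degree $4r-1$'' and asserts $H^{kr-2}(\Aa)=0$ for $k\geq 4$, which is literally correct only for $n=4r-1$, while you explicitly dispose of the boundary case $n=4r-2$ by noting that then $\mu_3=\mu_3'=0$ and the identity already works.
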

\begin{proof}
The ``only if" part is obvious, so we only prove the ``if" part. Assume $[\mu_3]=[\mu_3']$ in $\mathrm{HHarr}^{3,-1}(H^\ast(\Aa),H^\ast(\Aa))$. Then by Lemma \ref{lemma:prague} there exists $\phi_2$ in $\mathrm{CHarr}^{2,-1}(H^\ast(\Aa),H^\ast(\Aa))$ such that $d_{\mathrm{Harr}}\phi_2=\mu_3-\mu_3'$ and with
$\phi_2(a,b)=0$ unless $\deg(a)$ and $\deg(b)$ are strictly positive integer multiples of $r$. We claim that $(\mathrm{id}_{H^\ast(\Aa)},\phi_2)$ is an isomorphism of $C_\infty$-algebras between $(H^\ast(\Aa),\cdot,\mu_3)$ and $(H^\ast(\Aa),\cdot,\mu'_3)$. Indeed, since we only have multiplications $\mu_2=\mu_2'=\cdot,\mu_3,\mu_3'$ and components $\mathrm{id}_{H^\ast(\Aa)}$ and $\phi_2$ for the morphism, the $C_\infty$-morphism equations 
\begin{align}
 \sum_{k =1} ^p\sum_{r_1+ \ldots r_k = n}  (-1)^\eta \mu_k '  (\phi_{r_1} (a_1, \ldots, a_{r_1}), \ldots, \phi_{r_k}(a_{n-r_k +1}, \ldots, a_p)) \nonumber\\
 = \sum_{k =1} ^p \sum_{\lambda= 0} ^{p-k} (-1)^\xi \phi_{p-k+1}  (a_1, \ldots, a_\lambda, m_k (a_{\lambda+1}, \ldots, a_{\lambda+k}),a_{\lambda+k +1}, \ldots, a_p )\label{eq:amorphism}
\end{align}
where 
$$\eta = \sum_{1\le \alpha < \beta \le k}  (|a_{{r_1} + \ldots + r_{\alpha -1} +1}|  + \ldots + |a_{{r_1} + \ldots + _{r_\alpha}}|+ r_\alpha)(1+r_\beta),  $$
$$\xi = k  + k\lambda + k ( |a_1| +\ldots + |a_\lambda|) +p. $$
are trivial for $p\geq 7$. indeed, since $\phi_n=0$ for $n>2$ and $\mu_n,\mu'_n=0$ for $n>3$, there is no way of getting a nontrivial term with 7 or more arguments of the form $\mu'_n(\phi_{i_1},\phi_{i_2},\ldots,\phi_{i_n})$ or $\phi_n(\ldots,\mu_k,\ldots)$ under these conditions: the most one can get in the first case is $\mu_3'(\phi_2,\phi_2,\phi_2)$ and in the second case $\phi_2(\mu_3,\mathrm{id}_H^\ast(\Aa),\mathrm{id}_H^\ast(\Aa))$, or $\phi_2(\mathrm{id}_H^\ast(\Aa),\mu_3,\mathrm{id}_H^\ast(\Aa))$, or $\phi_2(\mathrm{id}_H^\ast(\Aa),\mathrm{id}_H^\ast(\Aa),\mu_3)$. So we only have to check equations  \eqref{eq:amorphism} for $p\leq 6$. Equations \eqref{eq:amorphism} for $p\leq 2$ are trivially satisfied, while for $p=3$ we precisely have the equation $d_{\mathrm{Harr}}\phi_2=\mu_3-\mu_3'$. So we are left with the three equations corresponding to $p=4,5,6$.
For $p=4$ we have the equation
\begin{align}\nonumber
\pm \phi_{2}& (a_1, a_{2})\cdot  \phi_{2}(a_3, a_4)) 
\pm \mu_3 '  (\phi_2 (a_1, a_{2}),a_3, a_4)) 
\pm \mu_3 '  (a_1,\phi_2(a_2,a_3), a_4)) \\
\nonumber&\qquad\qquad\pm \mu_3 '  (a_1,a_2,\phi_2(a_3,a_4)) \\
& =  \pm\phi_{2}  (\mu_3(a_1, a_2,a_3), a_4 )\pm \phi_{2}  (a_1, \mu_3(a_2, a_3,a_4)), \label{eq:p=4}
\end{align}
where we do not make the signs explicit since they are inessential here.
If any of the $a_i$ has degree 0, then both the left hand side and the right hand side are zero. Since $\Aa^\ast$ is  $(r-1)$-connected we can therefore assume that the cohomology classes $a_i$ have degree at least $r$. The element $\mu_3(a_i, a_j,a_k)$, if nonzero, is in degree $3r-1$; therefore, the right hand side of \eqref{eq:p=4} is always zero. Similarly, the element $\phi_2(a_i, a_j)$, if nonzero, is in degree $k$ with $k\equiv -1 \mod r$; therefore, the only possibly nonzero term in the left hand side of \eqref{eq:p=4} is the first one, and so \eqref{eq:p=4} reduces to
\begin{equation}\label{eq:p4}
\phi_{2} (a_1, a_{2})\cdot  \phi_{2}(a_3, a_4)=0.
\end{equation}
This is automatically satisfied unless $\deg(a_i)$ is a strictly positive multiple of $r$, for any $i=1,\dots,4$. In this case, the left hand side of \eqref{eq:p4} is an element in $H^{kr-2}(\Aa^\ast)$ for some $k\geq 4$. Since $\Aa^\ast$ is an $(r-1)$-connected Poncar\'e algebra of degree $4r-1$ with $r\geq 2$ we have that $H^{kr-2}(\Aa^\ast)=0$ for any $k\geq 4$, and so also in this case equation \eqref{eq:p4} is automatically satisfied.

 For $p=5$ we have the equation
\begin{align*}
\pm \mu_3 ' & (\phi_2 (a_1, a_{2}),\phi_2(a_3, a_4),a_5) 
\pm \mu_3 '  (\phi_2(a_1,a_2),a_3,\phi_2(a_4,a_5)) \\
&\pm \mu_3 '  (a_1,\phi_2(a_2,a_3),\phi_2(a_4,a_5)) 
=0.
\end{align*}
Since $\phi_2(a_i,a_j)$ can never be a nonzero element of degree $r$, this equation is automatically satisfied. Finally for $p=6$ we have the equation
\begin{align*}
\pm \mu_3 '  (\phi_2 (a_1, a_{2}),\phi_2(a_3, a_4),\phi_2(a_5,a_6))=0 
\end{align*}
and this is automatically satisfied by the same argument as for the $p=5$ case.
\end{proof}

Putting together Lemma \ref{lemma:first-obs-formality} and Corollary \ref{cor:prague} we obtain the following.

\begin{theorem}\label{thm:complete-invariant} Let $r\geq 2$. The graded commutative algebra $(H^\ast(\Aa),\cdot)$ and the Harrison cohomology class $[\mu_3]$ are complete invariants of the DGCA homotopy class of an $(r-1)$ connected Poincar\'e DGCA $\Aa^\ast$ of degree $n$ with $n\leq 4r-1$ admitting a Hodge homotopy, i.e., $\Aa^\ast$ and ${\Aa'}^\ast$ are in the same homotopy class if and only if there exists an isomorphism of graded commutative algebras $\phi\colon  (H^\ast(\Aa),\cdot_\Aa)\xrightarrow{\sim} (H^\ast(\Aa'),\cdot_{\Aa'})$ mapping $[\mu_3]$ to $[\mu_3']$ via the induced isomorphism of Harrison cohomologies. In particular, $\Aa^\ast$ is formal if and only if $[\mu_3]=0$.
\end{theorem}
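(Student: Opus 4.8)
The forward implication is precisely Lemma \ref{lemma:homotopy}, so the plan is to establish the converse: if there is an isomorphism of graded commutative algebras $\phi\colon (H^\ast(\Aa),\cdot_\Aa)\xrightarrow{\sim}(H^\ast(\Aa'),\cdot_{\Aa'})$ carrying $[\mu_3]$ to $[\mu_3']$, then $\Aa^\ast$ and ${\Aa'}^\ast$ lie in the same DGCA homotopy class. First I would reduce to a statement about minimal $C_\infty$-models. Since the ground field has characteristic zero, the inclusion of DGCAs into $C_\infty$-algebras induces an equivalence of homotopy categories \cite[Theorem 11.4.8]{loday-vallette}, and by Theorem \ref{thm:A2-algebra} each of $\Aa^\ast$ and ${\Aa'}^\ast$ is $C_\infty$-quasi-isomorphic to the minimal unital $C_\infty$-algebra produced from it by a Hodge homotopy transfer; hence it suffices to exhibit a $C_\infty$-isomorphism between these two minimal models. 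The bound $n\le 4r-1$ enters twice here: since $4r-1\le 5r-3$ for $r\ge 2$, Theorem \ref{thm:A2-algebra} guarantees the minimal models have the form $(H^\ast(\Aa),\cdot_\Aa,\mu_3)$ and $(H^\ast(\Aa'),\cdot_{\Aa'},\mu_3')$ with all higher multiplications $\mu_k$, $k\ge 4$, vanishing; and Lemma \ref{lemma:rrr} applied with $\ell=4$ (so that the hypothesis $n\le 4(r-1)+3$ holds) shows $\mu_3$ and $\mu_3'$ vanish on every triple of homogeneous classes whose degrees are not all equal to $r$.

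The second step is to transport structure along $\phi$. Setting $\mu_3'':=\phi^{-1}\circ\mu_3'\circ\phi^{\otimes 3}$ makes $\phi\colon (H^\ast(\Aa),\cdot_\Aa,\mu_3'')\to (H^\ast(\Aa'),\cdot_{\Aa'},\mu_3')$ a strict $C_\infty$-isomorphism. Because $\phi$ is degree-preserving, $(H^\ast(\Aa),\cdot_\Aa,\mu_3'')$ is again a minimal unital $C_\infty$-enrichment of $(H^\ast(\Aa),\cdot_\Aa)$ whose ternary multiplication is supported in tridegree $(r,r,r)$, and the isomorphism on Harrison cohomology induced by $\phi^{-1}$ sends $[\mu_3']$ to $[\mu_3'']$; combined with the hypothesis that the isomorphism induced by $\phi$ sends $[\mu_3]$ to $[\mu_3']$, this gives $[\mu_3'']=[\mu_3]$ in $\mathrm{HHarr}^{3,-1}(H^\ast(\Aa),H^\ast(\Aa))$. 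Now Corollary \ref{cor:prague} applies to the two enrichments $(H^\ast(\Aa),\cdot_\Aa,\mu_3)$ and $(H^\ast(\Aa),\cdot_\Aa,\mu_3'')$ and yields a $C_\infty$-isomorphism between them whose linear component is $\mathrm{id}_{H^\ast(\Aa)}$. Composing this with $\phi$ produces the desired $C_\infty$-isomorphism between the minimal models of $\Aa^\ast$ and ${\Aa'}^\ast$, and hence, through the equivalence of homotopy categories, a zig-zag of DGCA quasi-isomorphisms connecting them. This proves the first assertion.

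For the last assertion I would specialize the result just proved to ${\Aa'}^\ast:=(H^\ast(\Aa),\cdot,0)$, the cohomology algebra of $\Aa^\ast$ regarded as a DGCA with zero differential. This is an $(r-1)$-connected Poincar\'e DGCA of degree $n$ admitting the zero operator $d^-=0$ as a Hodge homotopy, for which the homotopy transfer is trivial and hence $[\mu_3']=0$. By definition $\Aa^\ast$ is formal exactly when it lies in the DGCA homotopy class of ${\Aa'}^\ast$, which by the theorem occurs iff some graded-algebra isomorphism $\phi$ of $H^\ast(\Aa)$ sends $[\mu_3]$ to $0$; since $\phi$ acts as an isomorphism on Harrison cohomology, this is equivalent to $[\mu_3]=0$.

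The main obstacle I expect is bookkeeping rather than conceptual: one must verify carefully that the hypotheses of Corollary \ref{cor:prague} are genuinely satisfied after transporting along $\phi$ — in particular the tridegree-$(r,r,r)$ support of $\mu_3''$ and the vanishing of all higher operations — and that the several Harrison-cohomology identifications (the maps induced by $\phi$, by $\phi^{-1}$, and by the transfer quasi-isomorphisms) compose as claimed. The genuinely load-bearing input is the interaction between the degree bound $n\le 4r-1$ and Lemma \ref{lemma:rrr}: without the concentration of $\mu_3$ in tridegree $(r,r,r)$ that it provides, Corollary \ref{cor:prague} would be unavailable and the argument would not go through.
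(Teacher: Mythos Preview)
Your proposal is correct and follows essentially the same route as the paper: reduce to minimal $C_\infty$-models via Theorem \ref{thm:A2-algebra}, transport the ternary multiplication along the algebra isomorphism $\phi$, and invoke Corollary \ref{cor:prague} to upgrade the equality of Harrison classes to a $C_\infty$-isomorphism. The only cosmetic difference is that the paper pushes $\mu_3$ forward to $H^\ast(\Aa')$ via $\mu_3^\phi=\phi\circ\mu_3\circ(\phi^{-1})^{\otimes 3}$ and applies Corollary \ref{cor:prague} there, whereas you pull $\mu_3'$ back to $H^\ast(\Aa)$; your version is in fact slightly more careful, since you explicitly invoke Lemma \ref{lemma:rrr} with $\ell=4$ to verify the tridegree-$(r,r,r)$ hypothesis of Corollary \ref{cor:prague} and correctly attribute the forward implication to Lemma \ref{lemma:homotopy} rather than Lemma \ref{lemma:first-obs-formality}.
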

\begin{proof}The `only if' part is Lemma \ref{lemma:first-obs-formality}, so we only need to prove the `if' part. Assume there is an isomorphism $\phi$ as in the statement. Then $\phi$ is a linear $C_\infty$-isomorphism between $(H^\ast(\Aa),\cdot_{\Aa},\mu_3)$ and the $C_\infty$-algebra $(H^\ast(\Aa'),\cdot_{\Aa'},\mu^\phi_3)$, where $\mu_3^\phi$ is the degree $-1$ ternary multiplication
\[
\phi\circ \mu_3\circ (\phi^{-1}\otimes \phi^{-1}\otimes \phi^{-1})\colon H^\ast(\Aa')^{\otimes 3} \to H^\ast(\Aa')
\]
on $H^\ast(\Aa')$. By assumption, $[\mu_3^\phi]=[\mu_3']$. Moreover, the $C_\infty$-algebras $H^\ast(\Aa'),\cdot_{\Aa'},\mu_3^\phi)$ and $H^\ast(\Aa'),\cdot_{\Aa'},\mu_3')$ satisfy the assumptions of  Corollary \ref{cor:prague}. Therefore $H^\ast(\Aa'),\cdot_{\Aa'},\mu_3^\phi)$ and $H^\ast(\Aa'),\cdot_{\Aa'},\mu_3')$ are isomorphic $C_\infty$-algebras and so also $H^\ast(\Aa),\cdot_{\Aa},\mu_3)$ and $H^\ast(\Aa'),\cdot_{\Aa'},\mu_3')$ are isomorphic.

\end{proof}

\begin{corollary}\label{cor:complete-invariant}
Let $r\geq 2$. 
The real homotopy type of  an $(r-1)$ connected compact manifold $M$ of dimension $n$ with $n\leq 4r-1$ is completely encoded in the de Rham cohomology algebra $(H^\ast_{\mathrm{dR}}(M),\wedge)$ of $M$ and in the  Harrison cohomology class $[\mu_3]\in \mathrm{HHarr}^{3,-1}(H^\ast_{\mathrm{dR}}(M),H^\ast_{\mathrm{dR}}(M))$.
\end{corollary}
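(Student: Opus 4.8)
The plan is to obtain this statement as the manifold-level specialization of Theorem \ref{thm:complete-invariant}, applied to the de Rham algebra $\Aa^\ast=\Aa^\ast(M)$. First I would recall that, by Sullivan's theorem \cite[Theorem D]{Sullivan1975} (see also \cite[Corollary 12.4, p. 117]{GM2013}), the real homotopy type of a simply connected compact manifold is precisely the datum of the DGCA homotopy class (weak equivalence class) of its de Rham algebra; hence two $(r-1)$-connected compact manifolds $M$ and $M'$ with $r\geq 2$ have the same real homotopy type if and only if $\Aa^\ast(M)$ and $\Aa^\ast(M')$ are connected by a zig-zag of quasi-isomorphisms of DGCAs.

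Next I would check that $\Aa^\ast(M)$ meets the hypotheses of Theorem \ref{thm:complete-invariant}. Integration over $M$ makes $\Aa^\ast(M)$ a Poincar\'e DGCA of degree $n$ by Poincar\'e duality; it is $(r-1)$-connected in the sense of Definition \ref{def:connected}, since the $(r-1)$-connectedness of the manifold $M$ gives $H^0(M,\R)=\R$ and $H^k(M,\R)=0$ for $1\leq k\leq r-1$; and it admits a Hodge homotopy by Example \ref{ex:d-}, via the operator $d^-$ built from the Green operator of any Riemannian metric on $M$. Finally, $H^\ast(\Aa^\ast(M))$ is canonically the de Rham cohomology algebra $(H^\ast_{\mathrm{dR}}(M),\wedge)$, and the Harrison class $[\mu_3]$ attached to $\Aa^\ast(M)$ via Hodge homotopy transfer is well defined and independent of the metric by Lemma \ref{lemma:canonical}. (Note also that for $n\leq 4r-1$ Theorem \ref{thm:A2-algebra} already forces $m_k=0$ for $k\geq 4$, so the transferred $C_\infty$-structure is the enrichment $(H^\ast_{\mathrm{dR}}(M),\wedge,\mu_3)$, and $\mu_3$ is its only nontrivial higher operation.)

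With these identifications in place, Theorem \ref{thm:complete-invariant} applied to $\Aa^\ast=\Aa^\ast(M)$ and ${\Aa'}^\ast=\Aa^\ast(M')$, both of degree $\leq 4r-1$, states exactly that $M$ and $M'$ have the same real homotopy type if and only if there is an isomorphism of graded commutative algebras $\phi\colon (H^\ast_{\mathrm{dR}}(M),\wedge)\xrightarrow{\sim}(H^\ast_{\mathrm{dR}}(M'),\wedge)$ carrying $[\mu_3]$ to $[\mu_3']$ under the induced isomorphism on Harrison cohomology; equivalently, the pair $\bigl((H^\ast_{\mathrm{dR}}(M),\wedge),\,[\mu_3]\bigr)$ with $[\mu_3]\in \mathrm{HHarr}^{3,-1}(H^\ast_{\mathrm{dR}}(M),H^\ast_{\mathrm{dR}}(M))$ is a complete invariant of the real homotopy type. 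I do not anticipate a genuine obstacle: the argument is a translation through Sullivan's theorem plus the verification of the Poincar\'e-DGCA-with-Hodge-homotopy hypotheses, and the only point deserving attention is the compatibility, already ensured by Lemma \ref{lemma:canonical}, between the geometrically defined $[\mu_3]$ (via a metric) and the $[\mu_3]$ of the abstract Poincar\'e DGCA $\Aa^\ast(M)$ used in Theorem \ref{thm:complete-invariant}.
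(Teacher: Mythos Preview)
Your proposal is correct and matches the paper's approach: the corollary is stated there without proof, as an immediate specialization of Theorem \ref{thm:complete-invariant} to the de Rham algebra $\Aa^\ast(M)$, using Example \ref{ex:d-} for the Hodge homotopy and Sullivan's theorem to identify the real homotopy type with the DGCA homotopy class. You have simply made explicit the verifications the paper leaves implicit.
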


\begin{remark}
If $n\leq 4r-2$ then $\mu_3=0$ and $\Aa^\ast$ is formal, see Theorem \ref{thm:A2-algebra}. So Theorem \ref{thm:complete-invariant} is actually a statement on $(r-1)$ connected Poincar\'e DGCAs of degree $4r-1$. 
\end{remark}

\section{Harrison cohomology, symmetric cohomology and the Bianchi-Massey tensor}\label{sec:HarrBM}
In this  section  we  relate  the Harrison  cohomology  class  $[\mu_3]$ with  the  Bianchi-Massey tensor   defined  by Crowley-Nordstr\"om in \cite{CN}  (Theorem \ref{thm:HarrBM}.)
The proof  of Theorem \ref{thm:HarrBM} is based on the spectral sequence symmetric-to-Harrison cohomology from \cite{Barr1968,Fleury1973}, giving  in characteristic zero
a canonical isomorphism
\[
H_{\mathrm{Sym}}^{\bullet-1}(H^\ast(\Aa),H^\ast(\Aa))\cong \mathrm{HHarr}^\bullet(H^\ast(\Aa),H^\ast(\Aa))
\]
relates the Harrison cohomology class $[\mu_3]$ to the Bianchi-Massey tensor from \cite{CN}. Let us first recall a few general features of the symmetric-to-Harrison cohomology spectral sequence. For a graded commutative algebra $B$ and a graded $B$-module $N$, the 0-th page of the spectral sequence is 
 \[
 E_0^{p,q}=\mathrm{CHarr}^{p+1}(S^{q+1}B,N),
 \]
 where $S$ is the (graded) symmetric algebra functor, i.e.,
 \[
  SV =\bigoplus_{k=0}^\infty \mathrm{Sym}^k(V)
 \]
 for any (graded) vector space $V$, and $S^n$ will denote the $n$-fold composition of $S$ with itself: it should not be confused with the $n$-th (graded) symmetric power that, in the hope of avoiding confusion, we are denoting with $\mathrm{Sym}^n$.  The horizontal differential
 \[
 d_{\mathrm{Harr}}\colon \mathrm{CHarr}^{p+1}(S^{q+1}B,N)\to \mathrm{CHarr}^{p+2}(S^{q+1}B,N)
 \]
 is the Harrison differential, while the vertical differential
 \[
  d_{\mathrm{Sym}}\colon \mathrm{CHarr}^{p+1}(S^{q+1}B,N)\to \mathrm{CHarr}^{p+1}(S^{q+2}B,N)
 \]
 is induced by the multiplication map from the symmetric algebra over a (graded) commutative algebra to the algebra itself. More precisely, the multiplication maps induce maps 
 \[
 \partial_i\colon S^{q+2}B=S^{q+1-i}(S(S^{i}B))\to S^{q+1-i}(S^{i}B)=S^{q+1}B,
 \]
 and the alternate sum $d_{S^\bullet B}=\sum_{i=0}^{q+1}(-1)^i\partial_i$ produces a chain complex
 \[
 \cdots \to S^{q+2}B\xrightarrow{d_{S^\bullet B}} S^{q+1}B \xrightarrow{d_{S^\bullet B}} S^{q}B \to \cdots
 \]
By applying $\mathrm{CHarr}^{p+1}(-,N)$ we get the vertical differential $d_{\mathrm{Sym}}$. The first page of the spectral sequence obtained by starting with the horizontal differential has
 \[
 ({}'E)^{p,q}_1=\mathrm{HHarr}^{p+1}(S^{q+1}B),N)
 \]
 By \cite[Proposition 3.1]{Barr1968} one has 
 \[
 \mathrm{HHarr}^{k}(SV,M)=
  \begin{cases}
\mathrm{Hom}(V,M)&\text{if $k=1$};
\\
  0 &\text{if $k>1$}
\end{cases}
\]
 for every finitely dimensional graded vector space $V$ and every graded $SV$-module $M$, for any $k>1$. Therefore, for any $q\geq 0$
  \[
{}'E^{p,q}_1=
  \begin{cases}
\mathrm{Hom}(S^qB,N)&\text{if $p=0$};
\\
  0 &\text{if $p>0$}
\end{cases}
\]
and the page ${}'E_1$ takes the form

 \begin{tikzpicture}
  \matrix (m) [matrix of math nodes,
    nodes in empty cells,nodes={minimum width=5ex,
    minimum height=5ex,outer sep=-5pt},
    column sep=1ex,row sep=1ex]{
                &      &     &     & \\
          2     &  \mathrm{Hom}(S^2B,N) &  0  & 0 & \\     
          1     &  \mathrm{Hom}(SB,N) &  0  &0 & \\
          0     &  \mathrm{Hom}(B,N)  & 0 &  0  & \\
    \quad\strut &   0  &  1  &  2  & \strut \\};
  \draw[-stealth] (m-4-2.north) -- (m-3-2.south);
   \draw[-stealth] (m-3-2.north) -- (m-2-2.south);
    \draw[-stealth] (m-2-2.north) -- (m-1-2.south);
\draw[thick] (m-1-1.east) -- (m-5-1.east) ;
\draw[thick] (m-5-1.north) -- (m-5-5.north) ;
\end{tikzpicture} 

Therefore ${}'E_2={}'E_\infty$ is 

 \begin{tikzpicture}
  \matrix (m) [matrix of math nodes,
    nodes in empty cells,nodes={minimum width=5ex,
    minimum height=5ex,outer sep=-5pt},
    column sep=1ex,row sep=1ex]{
                &      &     &     & \\
          2     &  H^2_{\mathrm{Sym}}(B,N) &  0  & 0 & \\     
          1     &  H^1_{\mathrm{Sym}}(B,N) &  0  &0 & \\
          0     &  H^0_{\mathrm{Sym}}(B,N)  & 0 &  0  & \\
    \quad\strut &   0  &  1  &  2  & \strut \\};
\draw[thick] (m-1-1.east) -- (m-5-1.east) ;
\draw[thick] (m-5-1.north) -- (m-5-5.north) ;
\end{tikzpicture}

Now let us consider the spectral sequence starting with the vertical differential. Barr shows that ${}''E_1$ has the form

  \begin{tikzpicture}
  \matrix (m) [matrix of math nodes,
    nodes in empty cells,nodes={minimum width=5ex,
    minimum height=5ex,outer sep=-5pt},
    column sep=1ex,row sep=1ex]{
                &      &     &     & &\\
          2     &  0 &  0  & 0 & &\\     
          1     &  0 &  0  &0 & &\\
          0     &  \mathrm{CHarr}^{1}(B,N)\,  &\, {\mathrm{CHarr}}^{2}(B,N)\, &  \,{\mathrm{CHarr}}^{3}(B,N)\,  & \phantom{0}& \\
    \quad\strut &   0  &  1  &  2  & \strut \\};
  \draw[-stealth] (m-4-2.east) -- (m-4-3.west);
   \draw[-stealth] (m-4-3.east) -- (m-4-4.west);
    \draw[-stealth] (m-4-4.east) -- (m-4-5.west);
\draw[thick] (m-1-1.east) -- (m-5-1.east) ;
\draw[thick] (m-5-1.north) -- (m-5-5.north) ;
\end{tikzpicture} 

Therefore ${}''E_2={}''E_\infty$ is

 \begin{tikzpicture}
  \matrix (m) [matrix of math nodes,
    nodes in empty cells,nodes={minimum width=5ex,
    minimum height=5ex,outer sep=-5pt},
    column sep=1ex,row sep=1ex]{
                &      &     &     & &\\
          2     &  0 &  0  & 0 & &\\     
          1     &  0 &  0  &0 & &\\
          0     &  \mathrm{HHarr}^1(B,N)\,  &\,\mathrm{HHarr}^2(B,N)\, &  \,\mathrm{HHarr}^3(B,N)\,  & \phantom{0}& \\
    \quad\strut &   0  &  1  &  2  & \strut \\};
\draw[thick] (m-1-1.east) -- (m-5-1.east) ;
\draw[thick] (m-5-1.north) -- (m-5-5.north) ;
\end{tikzpicture} 

\noindent and by comparing the two $E_\infty$ pages one finds the isomorphism $H_{\mathrm{Sym}}^{\bullet-1}(B,N)\cong \mathrm{HHarr}^\bullet(B,N)$. Moreover, 
the usual zig-zag argument then tells us that an element $[\xi]$ in $\mathrm{Harr}^3(B,N)={}''E^{2,0}_\infty$, that is naturally represented by an element in $E^{2,0}_0$, is also represented by an element $\Xi$ in $E^{1,1}_0$. The element $\Xi$ is an element in $\mathrm{CHarr}^{2}(S^{2}B,N)$ and so a morphism
\[
\Xi\colon \mathrm{Sym}^2(S^2B)\to N
\]
characterized by the fact that 
\[
d_{\mathrm{Harr}}\Xi=d_{\mathrm{Sym}}\xi
\]
Here we can take as $\xi$ a representative for $[\xi]$ in ${}''E^{2,0}_1=\mathrm{CHar}^3(B,N)$ and look at it as a representative for $[\xi]$ in $E^{2,0}_0=\mathrm{CHar}^3(SB,N)$ via the multiplication map $\epsilon\colon SB\to B$. The element $d_{\mathrm{Sym}}\xi$ is then the further pullback to $\mathrm{CHarr}^3(S^2B,N)$ via the multiplication map $\epsilon\colon S^2B\to SB$ Since the algebras $S^nB$ are iteratively generated by $B$, 
the equation $d_{\mathrm{Harr}}\Xi=d_{\mathrm{Sym}}\xi$ reduces to
\begin{equation}\label{eq:Harr-to-sym}
-x \Xi(y,z)+ \Xi(x\odot y,z)- \Xi(x,y\odot z)+ \Xi(x,y)z=\xi(x,y,z)
\end{equation}
for any $x,y,z$ in $B$. We can now prove the main statement of this section. First we need recalling the definition of the Bianchi-Massey tensor of a Poincar\'e DGCA $\Aa^\ast$ endowed with a Hodge homotopy, see \cite{CN}.
\begin{definition}\label{def:BM}
Let $(\Aa^\ast,d,d^-)$ be a Poincar\'e DGCA endowed with a Hodge homotopy. Let $\iota\colon H^\ast(\Aa)\to \Aa^\ast$ be the composition $H^\ast(\Aa)\xrightarrow{\sim} \mathcal{H}^\ast\hookrightarrow \Aa^\ast$, where the first isomorphism is the inverse to the projection $\mathcal{H}^\ast\to H^\ast(\Aa)$ and the second arrow is the inclusion of $\mathcal{H}^\ast$ in $\Aa^\ast$. Denote by the same symbol $\iota$ the extension of $\iota$ to a morphism of algebras $S^n(H^\ast(\Aa))\to S^n(\Aa^\ast)$ for any $n\geq 1$, and let $\epsilon\colon S^n\Aa^\ast\to \Aa^\ast$ be the iterated multiplication, for any $n\geq 1$. Finally, let 
\[
\alpha_2\colon \mathrm{Sym}^2(H^\ast(\Aa))\to \Aa^\ast
\]
be the restriction of $\epsilon\circ \iota\colon S^n(H^\ast(\Aa))\to \Aa^\ast$
to $\mathrm{Sym}^2(H^\bullet(A))$, and let
\[
\gamma\colon \mathrm{Sym}^2(H^\ast(\Aa^\ast))\to \Aa^{\ast-1}
\]
be the restriction of 
$d^{-}\circ \epsilon\circ \iota\colon S^n(H^\ast(\Aa^\ast))\to \Aa^{\ast-1}$ to $\mathrm{Sym}^2(H^\ast(\Aa))$.
The Bianchi-Massey tensor of $(\Aa^\ast,d,d^-)$ is the morphism
\[
\Xi_{BM}\colon \mathrm{Sym}^2(\mathrm{Sym}^2(H^\ast(\Aa)))\to H^\ast(\Aa)
\]
given by
\begin{equation}\label{eq:BM}
e\odot e' \mapsto [\pi_{\mathcal{H}}(\gamma(e)\cdot \alpha_2(e')+(-1)^{\deg e} \alpha_2(e)\cdot \gamma(e'))]
\end{equation}
\end{definition}
\begin{remark}
Notice that the definition of the Bianchi-Massey tensor does not make use of the orthogonality relations \eqref{eq:orthogonality}, but only of the fact $d^{-}$ is a homotopy between the identity of $\Aa^\ast$ and $\iota\circ \pi_{\mathcal{H}}$. So it can be defined for any Poincar\'e DGCA $\Aa^\ast$, since algebraic homotopies of this kind always exist. Here we assumed the homotopy is a Hodge one in Definition \ref{def:BM} only because that is the setting we are working in in the present article.
\end{remark}
\begin{remark}\label{rem:BM-CN}
As for Massey products, one can restrict $\Xi_{BM}$ to the subspace $K$ of $ \mathrm{Sym}^2(\mathrm{Sym}^2(H^\ast(\Aa)))$ spanned by those elements $e\odot e'$ such that both $e$ and $e'$ are in the kernel of the multiplication $\mathrm{Sym}^2(H^\ast(\Aa))\to H^\ast(\Aa)$, i.e., such that
$\epsilon(\iota((e))$ and $\epsilon(\iota((e'))$ are exact elements in $\Aa^\ast$. For these elements, $\gamma(e)\cdot \alpha_2(e'){+(-1)^{\deg e}} \alpha_2(e)\cdot \gamma(e')$ is a closed element in $\Aa^\ast$ and so, writing $\mathcal{F}$ for $\Xi_{BM}\vert_K$ gets the simplified formula $\mathcal{F}(e\odot e')= [\gamma(e)\cdot \alpha_2(e')\pm \alpha_2(e)\cdot \gamma(e')]$, which is the form the Bianchi-Massey tensor originally appears in \cite{CN}. Notice that{, thanks to Poincar\'e duality,} $\mathcal{F}$ retains all of the information on $\Xi_{BM}$. Indeed, { by Poincar\;e duality, $\Xi_{BM}$ is determined by its top degree component, i.e., by the elements $\Xi_{BM}(e\odot e')$ with $\deg(e)+\deg(e')=n+1$, where $n$ is the degree of $\Aa^\ast$.} If $a,b$ are elements in $H^\ast(\Aa)$, we have
\[
d^-(\iota(a)\iota(b)-\iota(ab))=d^-(\iota(a)\iota(b)),
\]
since $d^-$ vanishes on $\mathcal{H}^\ast$. Therefore, if we take $e$ to be the element $e=a\odot b-(ab)\odot 1$ in $\mathrm{Sym}^2(H^\ast(\Aa))$, then $e$ is in the kernel of the multiplication $\mathrm{Sym}^2(H^\ast(\Aa))\to H^\ast(\Aa)$ and moreover we have 
\[
\iota(e)=\iota(a)\odot\iota(b)-\iota(ab)\odot 1
\]
and so
\[
\gamma(e)=(d^{-}\circ \epsilon\circ \iota)(e)=d^-(\iota(a)\iota(b))=\gamma(a\odot b).
\]
{Similarly, let $a',b'\in H^\ast(\Aa)$ with $\deg(a')+\deg(b')+\deg(a)+\deg(b)=n+1$, and $e'=a'\odot b'-(a'b')\odot 1$. Then 
\[
\alpha_2(e')=\iota(a)\iota(b)-\iota(ab)=\alpha_2(a'\odot b')-\iota(a'b'),
\]
and so
\[
[\gamma(e)\alpha_2(e')]=[\gamma(a\odot b)\alpha_2(a'\odot b')]-[d^-(\iota(a)\iota(b))\, \iota(a'b')],
\]
where we used that we are in top degree to imply all the expressions above correspond to closed elements. By the orthogonality relations and Poincar\'e duality we have $[d^-(\iota(a)\iota(b))\, \iota(a'b')]=0$ and so $[\gamma(e)\alpha_2(e')]=[\gamma(a\odot b)\alpha_2(a'\odot b')]$. Switching the role of $a\odot b$ and $a'\odot b'$ one gets the identity
\[
\Xi_{BM}((a\odot b)\odot (a'\odot b'))=\Xi_{BM}(e\odot e')
\]
with $e, e'$ in the kernel of the multiplication $\mathrm{Sym}^2(H^\ast(\Aa))\to H^\ast(\Aa)$, so that $\Xi_{BM}(e\odot e')=\mathcal{F}(e\odot e')$.
}
\end{remark}
\begin{theorem}\label{thm:HarrBM}
Let $(\Aa^\ast,d,d^-)$ be a Poincar\'e DGCA endowed with a Hodge homotopy. The Bianchi-Massey tensor $\Xi_{BM}$ (or, more precisely, its extension to $\mathrm{Sym}^2(S^2(H^\ast(\Aa^\ast)))$ is a representative for the distinguished Harrison cohomology class $[\mu_3]$ of $(\Aa^\ast,d,d^-)$.
\end{theorem}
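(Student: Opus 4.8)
The plan is to realise the (suitably extended) Bianchi--Massey tensor $\Xi_{BM}$ as the intermediate term of the zig-zag attached to $[\mu_3]$ in the symmetric-to-Harrison spectral sequence. By the discussion preceding the statement, this amounts to identifying $\Xi_{BM}$ with an element of $\mathrm{CHarr}^2(S^2(H^\ast(\Aa)),H^\ast(\Aa))=E_0^{1,1}$ and then verifying the single identity \eqref{eq:Harr-to-sym} with $\xi=\mu_3$, i.e. $d_{\mathrm{Harr}}\Xi_{BM}=d_{\mathrm{Sym}}(\epsilon^\ast\mu_3)$, for a suitable representative $\mu_3$ of the distinguished class, where $\epsilon^\ast\mu_3$ is the pullback of $\mu_3$ along the multiplication $\epsilon\colon S(H^\ast(\Aa))\to H^\ast(\Aa)$.

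Two preliminary identifications are needed. First, by Lemma \ref{lemma:canonical} the class $[\mu_3]$ is independent of the Hodge homotopy, so I would compute it from the very homotopy $d^-$ of Definition \ref{def:BM}: applying the transfer formula \eqref{eq:hat-mk}--\eqref{eq:mk} to the special deformation retract $\mathcal{H}^\ast\hookrightarrow\Aa^\ast\xrightarrow{\pi_{\mathcal{H}^\ast}}\mathcal{H}^\ast$ with contracting homotopy $d^-$ (the side conditions \eqref{eq:side} hold for any Hodge homotopy) gives a minimal unital $C_\infty$-structure on $\mathcal{H}^\ast$, $C_\infty$-quasi-isomorphic to $\Aa^\ast$, whose ternary operation transported along $\mathcal{H}^\ast\xrightarrow{\sim}H^\ast(\Aa)$ is
\[
\mu_3(a,b,c)=[\pi_{\mathcal{H}^\ast}\bigl(d^-(\iota(a)\iota(b))\cdot\iota(c)-(-1)^{\deg a}\,\iota(a)\cdot d^-(\iota(b)\iota(c))\bigr)] .
\]
Second, since $S^2(H^\ast(\Aa))=S(S(H^\ast(\Aa)))$, a morphism $\mathrm{Sym}^2(S^2(H^\ast(\Aa)))\to H^\ast(\Aa)$ is the same datum as a graded-symmetric rule on pairs of elements of $S(H^\ast(\Aa))$; regarding the maps $\alpha_2=\epsilon\circ\iota$ and $\gamma=d^-\circ\epsilon\circ\iota$ of Definition \ref{def:BM} as defined on all of $S(H^\ast(\Aa))$, formula \eqref{eq:BM} extends verbatim to $\Xi_{BM}(e\odot e')=[\pi_{\mathcal{H}^\ast}(\gamma(e)\cdot\alpha_2(e')+(-1)^{\deg e}\,\alpha_2(e)\cdot\gamma(e'))]$. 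Because $\iota$ has image in $\mathcal{H}^\ast$ and $d^-\pi_{\mathcal{H}^\ast}=0$, one has $\gamma(e)=0$ for $e$ in $\mathrm{Sym}^0\oplus\mathrm{Sym}^1=\mathbb{K}\oplus H^\ast(\Aa)$; in particular $\Xi_{BM}$ vanishes whenever one of its arguments is a scalar, so it genuinely lies in the normalized Harrison complex.

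Then I would verify \eqref{eq:Harr-to-sym} on generators $x,y,z\in H^\ast(\Aa)$. Writing $X=\iota(x)$, $Y=\iota(y)$, $Z=\iota(z)$, the vanishing of $\gamma$ on $\mathrm{Sym}^1$ kills the outer terms $x\,\Xi_{BM}(y,z)$ and $\Xi_{BM}(x,y)\,z$; for the middle terms one has $\gamma(x\odot y)=d^-(XY)$, $\gamma(z)=0$, $\gamma(x)=0$, $\gamma(y\odot z)=d^-(YZ)$, $\alpha_2(x)=X$, $\alpha_2(z)=Z$, so that
\[
\Xi_{BM}(x\odot y,z)-\Xi_{BM}(x,y\odot z)=[\pi_{\mathcal{H}^\ast}\bigl(d^-(XY)\cdot Z-(-1)^{\deg x}\,X\cdot d^-(YZ)\bigr)]=\mu_3(x,y,z) .
\]
Hence $\Xi_{BM}$ satisfies \eqref{eq:Harr-to-sym} with $\xi=\mu_3$, which is precisely the defining property of the zig-zag representative, so $\Xi_{BM}$ (and, through Remark \ref{rem:BM-CN}, its restriction $\mathcal{F}$) represents $[\mu_3]$.

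The real work lies in the sign bookkeeping: one must reconcile the Harrison-coboundary convention used in \eqref{eq:Harr-to-sym}, the Koszul signs built into the transfer formula \eqref{eq:hat-mk}--\eqref{eq:m3}, and the sign $(-1)^{\deg e}$ in \eqref{eq:BM}, together with the sign in the identity $d_{\mathrm{Harr}}\Xi=d_{\mathrm{Sym}}\xi$ inherited from the total differential of the double complex. A secondary point deserving care is the passage from the evaluation \eqref{eq:Harr-to-sym} on $x,y,z\in H^\ast(\Aa)$ to the equality $d_{\mathrm{Harr}}\Xi_{BM}=d_{\mathrm{Sym}}(\epsilon^\ast\mu_3)$ of $3$-cochains on $S^2(H^\ast(\Aa))$; this rests on both sides being Harrison cochains on an algebra iteratively generated by $H^\ast(\Aa)$ — precisely the reduction asserted in the passage preceding the theorem — and can therefore be cited rather than redone.
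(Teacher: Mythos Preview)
Your proposal is correct and follows essentially the same approach as the paper: both arguments identify $\Xi_{BM}$ with the zig-zag representative in $E_0^{1,1}$ by verifying equation~\eqref{eq:Harr-to-sym} on generators $x,y,z\in H^\ast(\Aa)$, using that $\gamma$ vanishes on $\mathrm{Sym}^{\leq 1}$ to kill the outer terms and then recognizing the surviving middle terms $\Xi_{BM}(x\odot y,z)-\Xi_{BM}(x,y\odot z)$ as precisely the transfer formula for $\mu_3(x,y,z)$. Your discussion of sign bookkeeping and of the reduction to generators is more explicit than the paper's, which simply cites the ``iteratively generated'' remark and computes directly.
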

\begin{proof}
Since the ternary multiplication $\mu_3$ on $H^\ast(\Aa)$ is induced by the ternary multiplication $m_3$ on $\mathcal{H}^\ast$ via the linear isomorphism $\iota$, we see from \eqref{eq:m3} that it is explicitly given by
\[
\mu_3 (a, b, c) = [\pi_{\Hh} ( d^{-} (\iota(a) \cdot \iota(b)) \cdot \iota(c)  - (-1)^{\deg  a}  \iota(a) \cdot  d^-  (\iota(b) \cdot \iota(c)))].
\]
By \eqref{eq:BM} we see that for $x,y\in S^2(H^\ast(\Aa))$ we have
\[
\Xi_{BM}(x,y)=[\pi_{\mathcal{H}}(\epsilon((d^{-}\epsilon\iota(x))\odot \epsilon\iota(y))]+(-1)^{\deg x} [\pi_{\mathcal{H}}(\epsilon(\epsilon\iota(x)\odot (d^{-}\epsilon\iota(y)))]
\]
We have to check equation \eqref{eq:Harr-to-sym} for any three $a,b,c$ in $H^\ast(\Aa)$. Since $d^{-1}$ vanishes on $\mathcal{H}^\ast$, for any two $x,y\in H^\ast(\Aa)$ we have $\Xi_{BM}(x,y)=0$.
 Therefore, for any three $a,b,c\in A$,
\begin{align*}
d_{\mathrm{Harr}}&\Xi_{BM}(x,y,z)\\
&=-a\, \Xi_{BM}(b,c)+ \Xi_{BM}(a\odot b,c)- \Xi_{BM}(a,b\odot c)+ \Xi_{BM}(a,b)c\\
& =\Xi_{BM}(a\odot b,c)- \Xi_{BM}(a,b\odot c)\\
&=[\pi_\mathcal{H}(\epsilon(d^{-}(\epsilon\iota(a\odot b))\odot \epsilon\iota(c))]+(-1)^{\deg a\cdot b} [\pi_{\mathcal{H}}(\epsilon(\epsilon\iota(a\odot b)\odot d^{-}(\epsilon\iota(c)))]\\
&\qquad -[\pi_{\mathcal{H}}(\epsilon(d^{-}(\epsilon\iota(a))\odot \epsilon\iota(b\odot c))]-(-1)^{\deg a}[\pi_{\mathcal{H}}(\epsilon(\epsilon\iota(a)\odot d^{-}\iota(\epsilon(b\odot c)))]\\
&=[\pi_{\mathcal{H}}(\epsilon(d^{-}(\epsilon\iota(a\odot b))\odot \epsilon\iota(c))]- (-1)^{\deg  a} [\pi_{\mathcal{H}}(\epsilon(\epsilon\iota(a)\odot d^{-}\epsilon(\iota(b\odot c)))]\\
&=[\pi_{\mathcal{H}}((d^{-}(\iota(a)\cdot \iota(b))\cdot \iota(c))]- (-1)^{\deg  a} [\pi_{\mathcal{H}}(\iota(a)\cdot d^{-}(\iota(b)\cdot \iota(c)))]\\
&=\mu_3(a,b,c).
\end{align*}
\end{proof}

\begin{remark}\label{rem:HarrBM}
	
(1)  In \cite{CN}  Crowley and Nordstr\"om  also  discussed the relation  between  their  Bianchi-Massey  tensor on a  Poincar\'e DGA  $(\Aa^*, d)$  and the associated  $A_\infty$-structure on $H^* (\Aa^*)$ obtained  by a homotopy transfer  as  described  in \cite{Vallette14}. They noticed that  the multiplication  $\mu_3$ on  the $A_\infty$ algebra $H^* (\Aa^*)$  defines  the Bianchi-Massey tensor  uniquely and the  Bianchi-Massey    tensor  defines  $\mu_3$  up to a ``stabilization" \cite[Lemma 2.12]{CN}. Refer  to  \cite[\S 2.5]{CN}  for a  precise formulation of their result.

(2) Despite the Bianchi-Massey tensor $\Xi_{BM}$ (or its restriction $\mathcal{F}$, see Remark \ref{rem:BM-CN}) and the Harrison cohomology class $[\mu_3]$ contain the same amount of information, $\Xi_{BM}$ and $\mathcal{F}$ appear to be more suitable than $[\mu_3]$ for the investigation of questions related to rational and real homotopy types. One reason for this is that $\mathcal{F}$ enjoys a graded version of the symmetries of the Riemann curvature tensor (hence the name ``Bianchi", while the name``Massey" comes from its relation with the ternary multiplication $\mu_3$ and so with Massey products), so that in the analysis of $\mathcal{F}$ one can make full use of the vast amount of known algebraic results on the Riemann curvature tensor. For instance, in \cite{CN}, Crowley and Nordstr\"om make a brilliant use of the fact that  
the Ricci curvature of a manifold of dimension $n$ with $n\leq 3$ determines the full Riemann curvature tensor to prove that if $M$ is a closed $(r-1)$-connected manifold of dimension $n=(4r-1)$ with $b_{r}(M )\leq 3$ and such that there
is a $\phi \in H^{2r-1}_{\mathrm{dR}}(M )$ such that multiplication by $\phi$ induces a linear isomorphism $H^r_{\mathrm{dR}}(M ) \to H^{3r-1}_{\mathrm{dR}}(M)$, then $M$ is
intrinsically formal. Another truly remarkable result in \cite{CN} is the fact that, by using  the Bianchi-Massey tensor, Crowley and Nordstr\"om are able to improve the dimension bound $n\leq 4r-1$ in Theorem \ref{thm:complete-invariant} (that was also the bound in the first arXiv version of \cite{CN}) to the full range $n\leq 5r-3$ of the vanishing of $\mu_4$. Theorem \ref{thm:complete-invariant} with the optimal bound $n\leq 5r-3$ appears in \cite{FKLS2021} as Corollary 4.11. Yet, it is fair to say that the proof 
of Corollary 4.11 in \cite{FKLS2021} is incomplete and rather corresponds to the proof of Lemma \ref{lemma:first-obs-formality} in the present article than to a complete proof of Theorem \ref{thm:complete-invariant} with the optimal bound $n\leq 5r-3$. It would be interesting to understand whether one can give a proof of this result entirely in terms of the Harrison cohomology class $[\mu_3]$ alone, as the proof we present here for $n\leq 4r-1$, without relying on its equivalence with $\Xi_{BM}$ or $\mathcal{F}$.

\end{remark}

\section{Conclusions and final remarks}\label{sec:conclusions}
\begin{enumerate}
\item In this  paper,  using Hodge homotopy transfers, we  gave a very quick proof of Zhou's result   that   the  real  homotopy type  of a $(r-1)$-connected  compact  manifold  $M$ of dimension  $n \le \ell(r-1)+2$, with $\ell\geq 4$, is encoded  in a  minimal unital  $C_\infty$-algebra $(H^* (M,\R), \mu_2, \mu_3, \ldots)$ whose multiplication $\mu_2$ is  the  usual multiplication  of the cohomology  alegebra $H^*  (M, \R)$ and $\mu_k$ vanishes  for $k \ge \ell-1$ (Theorem \ref{thm:A2-algebra}).  Also, we extended Zhou's result, that is originally given in terms of $A_\infty$-algebras, to $C_\infty$-algebras.

 \item  We gave a    proof of Zhou's  conjecture, stating  that  the vanishing   in Theorem \ref{thm:A2-algebra} also holds  if  we  increase  the dimension  of $M$  up to  2, under the  condition that  $b_r =1$  (Theorem \ref{thm:zhou-s-conjecture}).  If $\ell =4$,  this  is  a  generalization of  Cavalcanti's  formality  result  in \cite{Cavalcanti2006}.  

\item   Let $M$ be an arbitrary   simply connected   compact manifold.  We  showed  that the  Harrison  cohomology class    $[\mu_3] \in  {\rm HHarr}^{3,-1} (H^* (M,\R), H^* (M, \R)) $  is   an invariant  of  the homotopy type  of  $M$  and   it is  the first obstruction    to the formality of  $M$.  If $M$ is  a $(r-1)$ connected  compact manifold  of dimension  $n \le 4r-1$ then $[\mu_3]$ is  moreover a complete  invariant  of    the real  homotopy type of  $M$. These    results  follow  from   more    general   Lemma \ref{lemma:first-obs-formality}  and  Theorem \ref{thm:complete-invariant}.

\item  We  relate  our    above results  with  Crowley-Nordstr\"om's   results   on Bianchi-Massey  tensor \cite{CN}  and  Nagy-Nordstr\"om's results  on  Bianchi-Massey and pentagon  Massey tensor  \cite{NN2021} by  showing  that 
the  Harrison  cohomology class  $[\mu_3]$  and  the   Bianchi-Massey  tensor  define  each other  uniquely  (Theorem \ref{thm:HarrBM}).  Using  this result,   our   Theorem \ref{thm:complete-invariant} can be obtained  from Crowley-Nordstr\"om results \cite[Theorems 1, 2]{CN},  but Theorem \ref{thm:A2-algebra} and Lemma \ref{lemma:first-obs-formality}  don't have   direct counterparts in Crowley-Nordstr\"om   paper \cite{CN} and  in  Nagy-Nordstr\"ome paper \cite{NN2021}, respectively.

\item   Theorem \ref{thm:HarrBM}  and   Crowley-Nordstr\"om result \cite[Theorem 1.2]{CN}   suggest that we should  be able to  improve  the  proof of Theorem \ref{thm:complete-invariant} to increase  the dimension $4r-1$ to  $5r-3$.

\item  Taking  into account  Kadishvili's results on  homotopy type  of a $C_\infty$-algebra \cite[Proposition 17]{Kadeishvili2023}, \cite[\S 4.1]{Kadeishvili2007}, Theorem \ref{thm:A2-algebra}  suggests  that  the real homotopy type  of a $(r-1)$ simply connected   compact manifold of dimension  $n \le 6r-4$ is encoded  in  $[\mu_3]$ and a  ``canonical" cochain in ${\rm CHarr}^{4, -2}(H^* (M, \R), H^* (M, \R))$. This conjecture  agrees  with and extends Nagy-Nordstr\"om's  conjecture  \cite[Conjecture 1.9]{NN2021} that in  the case  the Bianchi-Massey  tensor vanishes,  the  rational homotopy type  of a  $(r-1)$-connected  compact  manifold $M$ of dimension less or equal to $5r-2$ is defined by the
pentagon  Massey  tensor.
\end{enumerate} 

\subsection*{Acknowledgement}
We would like to thank  Johannes Nordstr\"om  for alerting us  about his  eprint with Nagy \cite{NN2021}, for his  inspiring  lecture \cite{Nordstroem2023}, which motivated us  to start  this project{, and for useful comments on a preliminary version of this article}. We thank Fernando Muro for having pointed our attention to relevant works concerning the Harrison cohomology class $[\mu_3]$ and its role as an obstruction to formality.  {We thank the referee for careful reading and precious suggestions.} DF  is  thankful to  the  Institute  of Mathematics  of the Czech Academy of Sciences  and   the  Charles  University  for their  hospitality  and   excellent working  conditions  during  his  visiting  Prague in   August 2023, where a part  of this   project  has been  carried out.  {This paper is part of the activities of the MIUR Excellence Department
Projects CUP:B83C23001390001.
DF has been partially supported by the PRIN 2017 {\em Moduli and Lie
Theory}, Ref.\ 2017YRA3LK. He is a member of the {\em Grup\-po Nazionale per le
Strutture Algebriche, Geometriche e le loro Applicazioni} (GNSAGA-INdAM). Research  of HVL was  supported by the Institute of Mathematics, Czech Academy of Sciences (RVO 67985840), and  GA\v CR-project  GA22-00091S.}


\bibliographystyle{unsrt}

\end{document}